   \def\MR#1{}
\definecolor{darkred}{RGB}{203,65,84}
\definecolor{darkblue}{RGB}{70,130,180}
\definecolor{darkgreen}{RGB}{2,100,64}
\newtheorem{theorem}{Theorem}[section]
\newtheorem{lemma}[theorem]{Lemma}
\newtheorem{proposition}[theorem]{Proposition}
\newtheorem{coro}[theorem]{Corollary}
\newtheorem{fact}[theorem]{Fact}
\newcounter{foo}
\newtheorem{theo}[foo]{Theorem}
\newtheorem{cor}[foo]{Corollary}
\theoremstyle{definition}
\newtheorem{definition}[theorem]{Definition}
\newtheorem{rema}[theorem]{Remark}
\newtheorem{remark}[theorem]{Remark}
\begin{document}
%%%%%%%%%%%%%%%COMMAND%%%%%%%%%%%%%
\newcommand{\interior}[1]{%
  {\kern0pt#1}^{\mathrm{o}}%
}
\newcommand{\ms}{\mathsf}
\newcommand{\inn}{_{n\in\mathbb N}}
\newcommand{\diam}{\operatorname{diam}}
\newcommand{\ad}{\operatorname{ad}}
\newcommand{\Ad}{\operatorname{Ad}}
\newcommand{\gp}{\mathbf{F}_\Theta}
\newcommand{\mT}{\mathcal T}
\newcommand{\mP}{\mathcal P}
\newcommand{\mc}{\mathcal}
\newcommand{\G}{\ms G}
\newcommand{\grf}{\pi_1(S)}
\newcommand{\bgrf}{\partial_\infty\pi_1(S)}
\newcommand{\psld}{\mathsf{PSL}_2(\mathbb R)}
\newcommand{\sld}{\mathsf{SL}_2(\mathbb R)}
\newcommand{\rp}{{\mathbf P}^1(\mathbb{R})}
\newcommand{\lix}[2]{\xi_\infty\left([#1 ,#2]\right)}
\newcommand{\tbw}{\begin{color}{darkred} To be written \end{color}}
\newcommand{\tbc}{\begin{color}{darkred} To be completed \end{color}}
\newcommand{\addw}{\begin{color}{darkblue} Add drawing \end{color}}
\newcommand{\mk}{\mathfrak}
%%%%%%%%%%%%%%%%%%%%%%%
\newcommand{\defeq}{\coloneqq}
\newcommand{\eqdef}{\eqqcolon}
\renewcommand{\leq}{\leqslant}
\renewcommand{\geq}{\geqslant}
\newcommand{\id}{\operatorname{id}}
%%SUITE
\newcommand{\seq}[1]{
\{#1_m\}_{m\in\mathbb N}}
\newcommand{\seqm}[1]{
\{#1\}_{m\in\mathbb N}}
%%MAPPING
\newcommand{\mapping}[4]
{
\left\{
\begin{array}{rcl}
#1 &\rightarrow& #2\\
#3 &\mapsto& #4 
\end{array}
\right.
}
 
\title[Positive representations]{Positivity and representations  of surface groups}

\author[O. Guichard]{Olivier Guichard}
\address{Universit\'e de Strasbourg et CNRS\\
  IRMA, UMR 7501\\
  7 rue Descartes\\
F-67000 Strasbourg} \email{olivier.guichard@math.unistra.fr}

\author[F. Labourie]{Fran\c{c}ois Labourie}
\address{Universit\'e C\^ote d'Azur, CNRS, LJAD, 
    France } \email{francois.labourie@univ-cotedazur.fr}

\author[A. Wienhard]{Anna Wienhard}
\address{Max Planck Institute for Mathematics in the Sciences, Inselstr. 22, 04103 Leipzig
Germany}
\email{wienhard@mis.mpg.de}

\thanks{O.G.\  and F.L.\  acknowledge the funding of the  ANR grant DynGeo ANR-11-BS01-013 and of the Institut Universitaire de France. A.W.\ acknowledges funding by the DFG, Germany's Excellence Strategy EXC-2181/1 - 390900948 (the Heidelberg STRUCTURES Cluster of Excellence), by the ERC under
	ERC-Consolidator grant 614733 and ERC-Advanced Grant 101018839. She thanks the Klaus Tschira Foundation, the Hector Fellow Academy, and the Clay Foundation for support.  F.L.,  O.G., and A.W.\ acknowledge support from N.S.F  grants DMS 1107452, 1107263, 1107367 ``RNMS: GEometric structures And Representation varieties''. F.L.\ and A.W.\ acknowledge support from N.S.F  Grant No.~1440140 while in residence in Fall  2019, at the MSRI in Berkeley, F.L. acknowledges funding by the University of California at Berkeley and by the ERC under ERC-Advanced grant 101095722.}
\begin{abstract}
In~\cite{GuichardWienhard_pos, Guichard:2018vo} Guichard and Wienhard introduced the notion of $\Theta$-positivity, a generalization of Lusztig's total positivity to 
real Lie groups that are not necessarily split. 

Based on this notion,  we introduce in this paper  {\em $\Theta$-positive
  representations} of surface groups. We prove that $\Theta$-positive
representations of closed surface groups  are $\Theta$-Anosov. 
 This implies that $\Theta$-positive representations are discrete and faithful and that the set of
$\Theta$-positive representations is open in the representation variety. 
 
We further establish important properties on limits of $\Theta$-positive representations, proving that the set of $\Theta$-positive representations is closed in the set of representations containing a $\Theta$-proximal element. 
This is used in \cite{BGLPW_pos} to prove the closedness of the set of $\Theta$-positive representations. 
 
\end{abstract}

\maketitle
\section{Introduction} An important feature of Teichm\"uller space, seen as a connected component of the space of representations of the fundamental group of a closed connected orientable surface~$S$ of genus at least~$2$ in~$\psld$, is that it consists entirely of representations which are discrete and  faithful. These representations  are moreover quasi-isometries from~$\grf$ to~$\psld$.  This situation does not extend to the case of {\em any} semi-simple group, notably for simply connected complex ones, where the representation variety is irreducible as an algebraic variety~\cite{Rapinchuk:1996}. 

However, this phenomenon was shown to happen for {\em some} groups of higher
rank.
Two families of representation varieties of the fundamental group of~$S$
have been singled out as they contain connected components consisting entirely
of discrete and faithful representations:
\begin{itemize}
\item Hitchin components when $\G$ is a real split
  group~\cite{Fock:2006a} (the Hitchin components were defined in
  \cite{Hitchin:1992es}; the case of $\ms{SL}_3(\mathbb{R})$ was treated in
  \cite{Choi:1993vr} and the case of $\ms{SL}_n(\mathbb{R})$ in \cite{Labourie:2006}),	
\item spaces of maximal representations, which are defined when $\G$ is
  Hermitian~\cite{Burger:2010ty} (\cite{Goldman:1988vh} proves the
  case of $\ms{SL}_2( \mathbb{R})$, \cite{Burger:2005} studies maximal
  representations into symplectic groups).
\end{itemize}
When $\G$ is $\ms{PSL}_2(\mathbb R)$, the Hitchin component and the space of
maximal representations both agree with the Teichm\"uller space. 

The study of these two families is closely related to the theory of Anosov representations as
introduced in~\cite{Labourie:2006,Guichard:2012eg}. Being Anosov is a notion
defined for any reductive Lie group and with respect to a choice of a parabolic subgroup. Every Anosov representation is in particular faithful, discrete and a quasi-isometric embedding~\cite{Labourie:2005a,Guichard:2012eg,Delzant:2007wb}.

 Representations in the Hitchin components as well as maximal representations
 can be characterized in terms of equivariant curves from the boundary at
 infinity of $\pi_1(S)$ into an appropriate flag variety, which preserve some
 positivity.  
In~\cite{Labourie:2006} Labourie established the Anosov property for representations in Hitchin components for $\ms{SL}_n(\mathbb R)$ and showed that they furthermore admit hyperconvex boundary curves. In combination with  \cite{Guichard:2008tu} this gives a characterization of Hitchin components in terms of hyperconvex maps. 
The link to positivity was first made for split real groups in work of Fock and Goncharov~\cite{Fock:2006a}; in their groundbreaking work, they develop a new way to study moduli spaces of local systems, by looking at configuration spaces of (decorated) flags. They introduce new cluster coordinate systems, which are closely related to Lusztig's total positivity~\cite{Lusztig:1994}, and open new ways for quantizations. The positive points of their cluster varieties give rise to higher rank Teichm\"uller spaces, and, building on insight of~\cite{Labourie:2006}, lead to a characterization of Hitchin components in terms of positive boundary maps. 

The characterization of maximal representations in terms of positivity is given in work of Burger, Iozzi and Wienhard \cite{Burger:2010ty}, where the notion of positivity is based on the maximality of the Maslov index
  and related to Lie semigroups in~$\G$. 
 
In~\cite{Guichard:2018vo, GuichardWienhard_pos}, Guichard and Wienhard introduced the notion of {\em $\Theta$-positivity}. This notion extends Lusztig's total positivity to generalized flag manifolds associated with the parabolic defined by a set $\Theta$ of simple roots. 
They classified all possible simple Lie groups that admit a positive
structure relative to some~$\Theta$. These include real split Lie groups, for which $\Theta$-positivity
is Lusztig's total positivity, Hermitian Lie groups of tube type, where
$\Theta$-positivity is given by the maximality of the Maslov index, but also two
other families of Lie groups, namely the family of classical groups
$\mathsf{SO}(p,q)$ ---with $p\not=q$--- and an exceptional family consisting
of the real rank~$4$ form of $\mathsf F_4$,  $\mathsf E_6$,  $\mathsf E_7$,
and  $\mathsf E_8$ respectively. They conjectured that $\Theta$-positivity
provides the right underlying algebraic structure for the existence of
components made solely of discrete and faithful representations~\cite[Conjecture~19]{Wienhard_ICM}.  

A positive structure on $\G$ relative to~$\Theta$ implies in particular the existence of a
positive semigroup in the unipotent radical of the parabolic group~$\ms
P_\Theta$, which then leads to the notions of {\em positive triples} and {\em
  positive quadruples} (as well as positive tuples) in the flag variety $\gp\simeq \G/\ms P_\Theta$. In the basic example of $\G=\psld$ and $\gp=\rp$, a triple is positive if it consists of pairwise distinct points and a quadruple is positive if it is cyclically ordered. 

Let us give a geometric picture of positivity in the flag variety~$\gp$.
For this let~$a$ and~$b$ be two points in~$\gp$ which are transverse to each other. 
Then $\Theta$-positivity provides the existence of preferred connected
components of the set of all points in~$\gp$ that are transverse to both~$a$
and~$b$. These preferred components are called  {\em diamonds} (with extremities~$a$ and~$b$). They are several, at least two,  disjoint diamonds with given extremities. The semigroup property alluded to before translates into a nesting property of diamonds:  if $c$~is a point in a diamond $V(a,b)$ with extremities~$a$ and~$b$, then there is exactly one diamond $V(c,b)$ (with extremities~$c$ and~$b$) included in $V(a,b)$. These nesting properties of diamonds play an important role in our arguments.  

 If $a$ and $b$ are transverse, and $c$ belongs to a diamond with extremities~$a$ and~$b$, we say the triple $(a,b,c)$ is {\em positive}. Similarly, one can define positive quadruples using configurations of diamonds (see Figure~\ref{fig:pos-quadr} and Definition~\ref{def:pos}). We show in Section~\ref{sec:property} that being positive is invariant under all permutations for a triple, and  invariant under the dihedral group for a quadruple.

We define a map $\xi$ from a cyclically ordered set $A$ to $\gp$ to be {\em positive} if $\xi$ maps triples of pairwise distinct points to positive triples and cyclically ordered quadruples to positive quadruples. 

This allows us to define the notion of a {\em $\Theta$-positive representation}: 
A representation $\rho\colon\pi_1(S) \to \G$ is {\em $\Theta$-positive} if there exist a non-empty subset $A$ of $\bgrf$, invariant by $\pi_1(S)$, and  a $\rho$-equivariant positive boundary map from $A$ to $\gp$.

We prove  
\begin{theo}\label{thm_intro:Anosov}
Let  $\G$ be a semi-simple Lie group that admits a positive
structure relative to~$\Theta$. Let $\rho$ be a $\Theta$-positive representation from $\pi_1(S)$ to~$\G$.

Then $\rho$ is a $\Theta$-Anosov representation.  	
\end{theo}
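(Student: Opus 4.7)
The $\Theta$-Anosov condition requires continuous $\rho$-equivariant transverse boundary maps $\xi\colon \bgrf \to \gp$ and $\xi^*\colon \bgrf \to \G/\ms P_\Theta^{\mathrm{op}}$, together with a dynamical contraction property of $\rho$ along the geodesic flow. The plan is, starting from the positive map on some $\grf$-invariant subset $A \subset \bgrf$ provided by the hypothesis, first to extend this map to a continuous positive $\rho$-equivariant map on all of $\bgrf$, then to deduce the required transversality directly from positivity of triples, and finally to establish the Anosov contraction using the nesting of diamonds together with the cocompactness of the $\grf$-action on the unit tangent bundle of $\tilde S$.

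For the extension, since $\grf$ acts minimally on $\bgrf$ and $A$ is nonempty and $\grf$-invariant, $A$ is dense. For a limit $c = \lim c_n$ with $c_n \in A$, fix auxiliary $a,b \in A$ with $c$ lying in a cyclic interval $(a,b)$; positivity places each $\xi(c_n)$ in the diamond $V(\xi(a),\xi(b))$ for $n$ large. The nested diamond structure, which encodes the monotonicity of the boundary map along the cyclic order, forces $\xi(c_n)$ to converge to a unique limit in the closure of this diamond, which is then the natural definition of $\xi(c)$. Applied to quadruples, the same argument shows the extension remains positive, and continuity follows. Transversality of $\xi(x)$ and $\xi(y)$ for $x \neq y$ is then immediate, since any triple $(x,y,z)$ maps to a positive, hence pairwise transverse, triple. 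The opposite boundary map $\xi^*$ is constructed in parallel using the $\Theta$-positive structure on the opposite parabolic.

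The heart of the proof is the dynamical contraction. For any primitive $\gamma \in \grf$ with attracting and repelling fixed points $\gamma^\pm \in \bgrf$, equivariance together with the nesting property yields proximality of $\rho(\gamma)$ on $\gp$ with attracting fixed point $\xi(\gamma^+)$: applying $\rho(\gamma)$ to a diamond $V(\xi(\gamma^-),\xi(p))$ with $p \in A$ close to $\gamma^+$ nests it strictly inside itself, and iterating shrinks it down to $\xi(\gamma^+)$. To upgrade this pointwise proximality to the uniform exponential contraction required for the Anosov condition, I would follow the strategy pioneered by Labourie for Hitchin representations: using continuous families of nested diamonds parametrized by points of the unit tangent bundle, construct a $\rho$-equivariant contracting splitting of a flat bundle over $T^1 \tilde S$ and descend it to the compact quotient $T^1 S$. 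Extracting quantitative exponential contraction rates from what is a priori only the qualitative nested structure of diamonds is where the technical difficulty lies, and it is precisely here that cocompactness of $\grf$ together with uniform continuity of $\xi$ are essential.
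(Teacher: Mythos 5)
Your overall architecture matches the paper's (extend the boundary map, read transversality off positivity of triples, then get the Anosov contraction from nested diamonds over the unit tangent bundle and cocompactness), but at the two places where the real work happens your argument has genuine gaps. First, the extension/continuity step. Nesting of diamonds alone does not force $\xi(c_n)$ to converge: nested closed diamonds have nonempty intersection, but nothing yet says this intersection is a point, and a positive map a priori behaves like a monotone map, admitting distinct one-sided limits. The paper proves convergence of monotone sequences (Proposition~\ref{pro:ext}) by writing $\xi(c_n)=m_n\cdots m_1\cdot \xi(c_0)$ with $m_i$ in the positive semigroup, using the boundedness statement (Proposition~\ref{pro:bounded}, itself resting on the circle and Zariski-density arguments of Section~\ref{sec:property}) and the fact that the only invertible element of $\overline{\ms N}$ is the identity (the salient-cone property) to identify all subsequential limits. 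This only yields a left-continuous $\xi_+$ and a right-continuous $\xi_-$; your claim that ``continuity follows'' is exactly the point that can fail for a bare positive map (jumps are possible). The paper rules out jumps using equivariance and cocompactness in an essential and quantitative way: boundedness of the triple map in $\mathcal T/\G$ (Lemma~\ref{lem:UnifBd}, Proposition~\ref{pro:triple-bounded}), the a priori bound on the tripod norm (Proposition~\ref{pro:a-priori}), and the ``contraction in corners'' Proposition~\ref{pro:contract2} applied to the bounded function $d_{\tau(t)}(\xi_+(y),\xi_-(y))$ (Lemma~\ref{lem:fbounded}) to conclude $\xi_+=\xi_-$.

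Second, the contraction step. You correctly identify the strategy (a bundle of diamonds over $\mathsf U\mathbf H^2$, an invariant section, cocompactness), but you explicitly leave open how to extract uniform contraction from the qualitative nesting, and that is precisely the content the proof cannot do without. The paper supplies the missing mechanism: the $\G$-equivariantly defined, complete \emph{diamond metric} on each diamond (Definition~\ref{def:diammet}), built from tripods and the cone parameterization $\Psi\colon\ms C\to\ms N$, together with the contraction properties (Propositions~\ref{pro:dm-contract}, \ref{pro:Acontract}, \ref{pro:Acontract2}); uniformity then comes from compactness of $\mathsf U\mathbf H^2/\pi_1(S)$ via the semicontinuous function $u\mapsto s(u)$ in the proof of Proposition~\ref{prop:Anosov}. (Note also that exponential rates per se are not extracted directly; uniform definite contraction of the flow on the bundle after a fixed time suffices.) A minor point: since $\ms P_\Theta$ is assumed conjugate to its opposite, no separate construction of $\xi^*$ ``on the opposite parabolic'' is needed. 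As written, your proposal names the right landmarks but does not bridge either of these two gaps.
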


As a direct consequence we obtain that a $\Theta$-positive representation is faithful with discrete image, its orbit map into the symmetric space is a quasi-isometric embedding and the boundary map extends uniquely to a H\"older map~\cite{Labourie:2005a,Guichard:2012eg,Delzant:2007wb,Bridgeman:2015ba}.

Theorem~\ref{thm_intro:Anosov} provides a general proof of the Anosov property
for all Hitchin representations and all maximal representations.
This is especially relevant for the case  of the Hitchin component of
$\mathsf{SO}(p,p)$ and of $\mathsf{F}_4$, $\mathsf{E}_6$, $\mathsf{E}_7$, and
$\mathsf{E}_8$ and the case of maximal representations into the exceptional Hermitian Lie group of tube type, which
cannot be tightly embedded into $\mathsf{Sp}_{2n}(\mathbb R)$
\cite{Burger_tight, Hamlet_1, Hamlet_2}.  The Anosov property was established for all maximal
representations which tightly embed into $\mathsf{Sp}_{2n}(\mathbb R)$
in~\cite{Burger:2005} and for the Hitchin component of \(\mathsf{SL}_n(\mathbb{R})\) in
\cite{Labourie:2006}, from which follows the Anosov property for the Hitchin
components of \(\mathsf{Sp}_{2n}(\mathbb{R})\), \(\mathsf{SO}(p,p+1)\), and
\(G_2\). Fock and Goncharov established a related key property: for  every
Hitchin representation, there exists  a continuous, transverse (and positive)
boundary map \cite[Theorem~7.2]{Fock:2006a}; 
from this, the Anosov property
can be established for Zariski dense Hitchin representations  using for example \cite[Theorem~4.11]{Guichard:2012eg}. 

Using their work on amalgamation of Anosov representations, Dey and Kapovich \cite[Section 6]{Dey-Kapovich} established also the Anosov property for all Hitchin components for all real split groups.

Using the openness of the set of $\Theta$-Anosov representations, a further consequence of Theorem~\ref{thm_intro:Anosov} is the following 
\begin{cor}\label{cor_intro:open}
The set of $\Theta$-positive representations $\operatorname{Hom}_{\Theta\textrm{-pos}}(\grf,\G)$ is an open subset in the set of all homomorphisms $\operatorname{Hom}(\grf,\G)$.
\end{cor}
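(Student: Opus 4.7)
The plan is to combine Theorem~\ref{thm_intro:Anosov} with the well-known openness and stability of the $\Theta$-Anosov condition. Let $\rho_0\colon\grf\to\G$ be a $\Theta$-positive representation. By Theorem~\ref{thm_intro:Anosov}, $\rho_0$ is $\Theta$-Anosov, and by the standard openness and stability results for Anosov representations~\cite{Labourie:2005a,Guichard:2012eg}, there is an open neighborhood $U\subset\operatorname{Hom}(\grf,\G)$ of $\rho_0$ consisting of $\Theta$-Anosov representations, together with a continuous family $\rho\mapsto\xi_\rho$ of equivariant boundary maps $\xi_\rho\colon\bgrf\to\gp$ such that $\xi_\rho\to\xi_{\rho_0}$ uniformly as $\rho\to\rho_0$. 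To conclude, it is enough to show that for $\rho$ sufficiently close to $\rho_0$ the map $\xi_\rho$ is positive, since then $A\defeq\bgrf$ witnesses that $\rho$ is $\Theta$-positive.

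The first step is to observe that, because $\rho_0$ is $\Theta$-Anosov, the original positive boundary map defined on the $\pi_1(S)$-invariant set $A_0\subset\bgrf$ must coincide with the Anosov boundary map $\xi_{\rho_0}$ on $A_0$ (they are both $\rho_0$-equivariant and compatible with the dynamics at fixed points of hyperbolic elements, which are dense in $\bgrf$). Moreover, the closure of $A_0$ is all of $\bgrf$ (because $\pi_1(S)$ acts minimally on $\bgrf$ and $A_0$ is non-empty and invariant), so by continuity $\xi_{\rho_0}$ is positive on all ordered triples and quadruples in $\bgrf$, not merely those with entries in $A_0$. This uses that, as established in Section~\ref{sec:property}, the set of positive triples (resp.\ positive quadruples) is closed in the space of pairwise transverse triples (resp.\ quadruples), being a union of closures of connected components of an open set.

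The second step is a cocompactness-plus-continuity argument. Let $T\subset\bgrf^{(3)}$ denote the space of triples of pairwise distinct points, and similarly $Q\subset\bgrf^{(4)}$ the space of cyclically ordered quadruples. The diagonal action of $\grf$ on $T$ and $Q$ is properly discontinuous and cocompact, so we may fix compact fundamental domains $K_T\subset T$ and $K_Q\subset Q$. By equivariance of the boundary map and invariance of positivity under the $\G$-action, $\xi_\rho$ is positive as soon as it sends $K_T$ into positive triples and $K_Q$ into positive quadruples. Since positivity is an open condition on ordered configurations in $\gp$, the subset $\mathcal O_T$ of $K_T$ mapped by $\xi_{\rho_0}$ to positive configurations is open in $K_T$ and, by step one, equal to all of $K_T$ — similarly for $K_Q$. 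Uniform continuity of the family $\rho\mapsto\xi_\rho$ on the compact sets $K_T$ and $K_Q$ then yields a neighborhood $U'\subset U$ of $\rho_0$ such that $\xi_\rho$ remains positive on $K_T$ and $K_Q$, hence everywhere by equivariance. This shows $U'\subset\operatorname{Hom}_{\Theta-pos}(\grf,\G)$ and completes the proof.

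The main obstacle is the global positivity of the extended boundary map: one must be sure that the continuous Anosov extension of the $a$ priori partially defined positive map is positive on all of $\bgrf$, and that the continuity of $\rho\mapsto\xi_\rho$ is sufficiently uniform to preserve positivity of a whole compact family of configurations at once. Both points reduce to minimality of $\grf\curvearrowright\bgrf$ together with cocompactness of $\grf$ on ordered configuration spaces, and the fact that diamonds (and hence positive triples and quadruples) form open subsets of the relevant transversality loci.
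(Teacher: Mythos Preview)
Your overall strategy is the same as the paper's: the paper simply says that Corollary~\ref{cor_intro:open} ``follows now directly from the openness of the set of $\Theta$-Anosov representations'' after Proposition~\ref{prop:Anosov}, and what you wrote is precisely the unpacking of that sentence. Two points deserve correction or tightening, however.

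\textbf{The cocompactness claim for quadruples is false.} You assert that the diagonal action of~$\grf$ on the space~$Q$ of cyclically ordered quadruples of distinct points in~$\bgrf$ is cocompact. It is not: the cross-ratio descends to a continuous $\grf$-invariant function on~$Q$ with non-compact image, so there is no compact fundamental domain~$K_Q$. Thus your uniform-continuity argument cannot be run on quadruples as written. The fix is already in the paper: since the Anosov boundary map~$\xi_\rho$ is \emph{continuous}, Proposition~\ref{pro:tripenough} says that positivity on triples implies positivity on quadruples. So you only need the cocompactness argument on~$T$ (which is genuinely cocompact, being the unit tangent bundle), and then invoke Proposition~\ref{pro:tripenough} to get the quadruple condition for free. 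Alternatively one can argue pointwise on quadruples using Lemma~\ref{lem:open-quad}, but the first route is cleaner.

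\textbf{The identification of the two boundary maps is under-argued.} You claim the original positive map on~$A_0$ agrees with the Anosov boundary map because both are ``compatible with the dynamics at fixed points of hyperbolic elements''. But~$A_0$ is an arbitrary $\grf$-invariant set and need not contain any such fixed point, so that argument does not literally apply. The paper handles this directly: Proposition~\ref{pro:continuous} extends the positive map continuously to all of~$\bgrf$, and Proposition~\ref{prop:Anosov} shows that this extension \emph{is} the Anosov boundary map. Citing those two results replaces your heuristic and makes the first step rigorous.

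With these two adjustments your proof is correct and matches the paper's intended argument.
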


To show that the set of $\Theta$-positive representations indeed give rise to higher Teichm\"uller spaces it  remains to prove that the set of $\Theta$-positive representations is closed. We establish essential steps in this direction. For this we consider 
 the set $\operatorname{Hom}^\Theta(\pi_1(S),\G)$ of homomorphisms~$\rho$
 of~$\pi_1(S)$ in~$\G$  such that the image of $\rho$ contains a
 $\Theta$-loxodromic element (i.e.\ an element having both attracting and
 repelling fixed points in the flag variety $\gp$ associated to
 $\Theta$). Observe that Proposition \ref{pro:BL} clarifies the relation with
 the Zariski closure, and in particular the set
 $\operatorname{Hom}^\Theta(\pi_1(S),\G)$ contains the representations with Zariski dense images. 
We establish in Proposition~\ref{prop:nonpara} that  $\operatorname{Hom}_{\Theta\textrm{-pos}}(\Gamma,\G)$ is a subset of $\operatorname{Hom}^\Theta(\pi_1(S),\G)$. We show

\begin{theo}\label{thm_intro:nonpara}
The set of $\Theta$-positive representations 
$\operatorname{Hom}_{\Theta\textrm{-pos}}(\grf,\G)$  is a nonempty union of connected components of 
$\operatorname{Hom}^\Theta(\grf,\G)$.
\end{theo}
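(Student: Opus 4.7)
\smallskip

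\noindent\textbf{Proof proposal.}
Since being a union of connected components of $\operatorname{Hom}^*(\grf,\G)$ is equivalent to being both open and closed in that space, and since Corollary~\ref{cor_intro:open} already provides openness of $\operatorname{Hom}_{\Theta-pos}(\grf,\G)$ in $\operatorname{Hom}(\grf,\G)$ (hence in the subspace $\operatorname{Hom}^*(\grf,\G)$), the whole task reduces to proving closedness in $\operatorname{Hom}^*(\grf,\G)$. The plan is thus: take a sequence $\seq{\rho}$ of $\Theta$-positive representations converging to some $\rho\in\operatorname{Hom}^*(\grf,\G)$, and construct a non-empty $\pi_1(S)$-invariant set $A\subset\bgrf$ together with a $\rho$-equivariant positive map $\xi\colon A\to\gp$.

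\smallskip

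The first step is to exploit Theorem~\ref{thm_intro:Anosov}: each $\rho_m$ is $\Theta$-Anosov, hence admits a $\rho_m$-equivariant continuous boundary map $\xi_m\colon\bgrf\to\gp$, and the map $\xi_m$ is positive in the sense introduced above. I would then fix a countable $\pi_1(S)$-invariant subset $A\subset\bgrf$, most naturally the set of attracting fixed points $\gamma^+$ of infinite-order elements $\gamma\in\pi_1(S)$; note that on such points one has the explicit description $\xi_m(\gamma^+)=$ attracting fixed point of $\rho_m(\gamma)$ in $\gp$. By compactness of $\gp$ and a standard diagonal extraction over the countable set $A$, after passing to a subsequence we may assume that $\xi_m(a)$ converges in $\gp$ for every $a\in A$; call the limit $\xi(a)$. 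Equivariance of each $\xi_m$ with respect to $\rho_m$ passes to the limit, so $\xi$ is $\rho$-equivariant.

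\smallskip

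The second step is to show that $\xi$ is positive. For any cyclically ordered quadruple $(a,b,c,d)$ in $A$, the quadruples $(\xi_m(a),\xi_m(b),\xi_m(c),\xi_m(d))$ are positive, and hence the limit quadruple $(\xi(a),\xi(b),\xi(c),\xi(d))$ lies in the closure of the set of positive quadruples in $\gp^4$; similarly for triples. The difficulty is that this closure is strictly larger than the set of positive configurations: it also contains \emph{degenerate} ones, where two images coincide, or two images fail to be transverse, or an image lies on the topological boundary of a diamond rather than in its interior. The heart of the argument must therefore be to rule out such degenerations under the hypothesis $\rho\in\operatorname{Hom}^*(\grf,\G)$. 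The strategy I would follow is a case analysis in the spirit of Proposition~\ref{prop:nonpara}: in each degenerate scenario, one uses the $\rho$-equivariance of $\xi$ and the nesting properties of diamonds recalled in the introduction to produce either a flag or a pair of opposite flags in $\gp$ that is preserved by a finite-index subgroup of $\rho(\pi_1(S))$, i.e.\ a parabolic factoring contradicting $\rho\in\operatorname{Hom}^*(\grf,\G)$. Concretely, if $\xi(a)$ and $\xi(b)$ fail to be transverse for some $a,b\in A$, the set of pairs on which transversality fails is a closed $\rho(\pi_1(S))$-invariant relation that, by minimality of the $\pi_1(S)$-action on $\bgrf$ and the density of $A$, forces the image of $\rho$ to stabilise a proper Schubert-type subvariety.

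\smallskip

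The main obstacle is precisely this last step: pushing the naive ``limits of positive are in the closure of positive'' statement to the sharp statement ``limits of positive are positive, provided one does not factor through a parabolic.'' This requires a delicate use of the semigroup/diamond geometry developed earlier in the paper, together with a dynamical input coming from the density of attracting fixed points and the action of $\pi_1(S)$ on $\bgrf$, so as to show that any single degeneracy of $\xi$ propagates by equivariance into a global invariant subflag, contradicting membership in $\operatorname{Hom}^*(\grf,\G)$. Once positivity of $\xi$ on $A$ is established, $\rho$ is $\Theta$-positive by definition, closedness is proven, and the theorem follows.
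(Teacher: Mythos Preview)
Your overall architecture is correct and matches the paper: reduce to closedness, take the boundary maps~$\xi_m$ of a convergent sequence~$\rho_m$, extract a pointwise limit~$\xi$ on a countable invariant set~$A$, and then argue that either $\xi$~is positive or $\rho$~virtually factors through a parabolic. You also correctly locate the crux: ruling out degeneration of the limit configurations.

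However, your proposed mechanism for the crux has a genuine gap. You want to argue that the non-transversality relation $\{(a,b)\in A^2 \mid \xi(a)\text{ not transverse to }\xi(b)\}$ is closed and $\pi_1(S)$-invariant, and then invoke minimality of the $\pi_1(S)$-action on pairs to propagate one failure of transversality to all pairs. But $\xi$ is merely a pointwise limit on a countable set; it has no continuity at this stage (the extensions~$\xi_\pm$ of Corollary~\ref{coro:posmap} presuppose positivity of~$\xi$, which is precisely what is at issue), so this relation is not known to be closed and the minimality argument does not go through.

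The paper resolves the crux differently. It first isolates the case where \emph{no} pair $\xi_\infty(x),\xi_\infty(y)$ is transverse; an independent result on Bruhat cells (Theorem~\ref{theo:para} in the appendix: a subgroup whose orbit in $\G/\ms P$ never meets the open cell is virtually contained in a proper parabolic) then forces $\rho_\infty\notin\operatorname{Hom}^*(\grf,\G)$. In the remaining case, where \emph{some} pair is transverse, the paper does \emph{not} try to show directly that all pairs are. Instead it introduces the \emph{limiting diamonds} $W_\infty(x,y)=\lim_m \overline{V^{\xi_m(c)}(\xi_m(x),\xi_m(y))}$ and proves (Lemma~\ref{lem:ZarDens}) that their Zariski closures all coincide, using only the contraction dynamics of~$\pi_1(S)$ on intervals of~$\bgrf$ together with the elementary fact that an algebraic automorphism mapping a Zariski-closed set into itself is surjective on it. Since for the one transverse pair the set $W_\infty$ is the closure of a genuine diamond and hence Zariski dense, \emph{all} the~$W_\infty$ are Zariski dense. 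For any triple $(x,y,z)$ one can then choose generic points $a,b,c$ in the three relevant limiting diamonds that are pairwise transverse and transverse to the~$\xi_\infty$-images; two applications of the necklace property (Corollary~\ref{coro:necklace}) then yield that $(\xi_\infty(x),\xi_\infty(y),\xi_\infty(z))$ is itself a positive triple. This Zariski-closure-of-limiting-diamonds trick is the idea missing from your sketch.
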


In the case when $\G$ is locally isomorphic to $\mathsf{SO}(p,q)$, $p\leq q$,
Beyrer and Pozzetti~\cite{BeyrerPozzetti} recently proved the {\em closedness}
of the set of $\Theta$-positive Anosov representations in
$\operatorname{Hom}(\pi_1(S),\G)$, thus by Theorem~\ref{thm_intro:Anosov} also
the closedness of the set of $\Theta$-positive representations. They derive
this as a consequence of a family of collar lemmas and fine properties of the
boundary maps they establish. 
  In~\cite{BGLPW_pos}, together with Beyrer and Pozzetti, we prove collar lemmas in full generality for $\Theta$-positivity, which in combination with Theorem~\ref{thm_intro:nonpara} establishes that the set of $\Theta$-positive representations is closed in $\operatorname{Hom}(\pi_1(S),\G)$. Thus with Corollary~\ref{cor_intro:open} and Theorem~\ref{thm_intro:Anosov} $\Theta$-positive representations give rise to connected components consisting entirely of discrete and faithful representations.

Note that special $\Theta$-positive representations arise from positive
embeddings of $\mathsf{SL}_2(\mathbb{R})$ into $\G$.  These positive
embeddings of $\mathsf{SL}_2(\mathbb{R})$ can be produced 
explicitly using specific ``positive'' nilpotent element in
the Lie algebra of~$\G$. They have the property that the embedding induces a
positive map from~$\rp$ into~$\gp$. We call the image of such a map a {\em
  positive circle}. Every group $\G$ admitting a positive
structure relative to~$\Theta$ contains a special (conjugacy class of) $\Theta$-principal  $\mathsf{SL}_2(\mathbb{R})$. The circles associated to this $\Theta$-principal $\mathsf{SL}_2(\mathbb{R})$ play an important role in some of our arguments. 
Precomposing a positive embedding $\mathsf{SL}_2(\mathbb{R})$ into  $\G$ with a discrete embedding of $\pi_1(S)$ into $\mathsf{SL}_2(\mathbb{R})$, we obtain a $\Theta$-positive representation. 

Recently, Bradlow, Collier, Garc\'ia-Prada, Gothen, and
Oliveira~\cite{BCGGO_general} developed the theory of magical
$\mathfrak{sl}_2$-triples, which is very closely related to the theory of
$\Theta$-positivity. In fact a real simple Lie group is associated to a
magical $\mathfrak{sl}_2$-triple if and only if it admits a positive
structure relative to~$\Theta$. Using methods from the theory of Higgs bundles,
  they parametrize special connected components $\mathcal{P}_e(S,\G)$, called Cayley components. 
We expect these connected components $\mathcal{P}_e(S,\G)$ to consist entirely of $\Theta$-positive representations, and furthermore to coincide with the set of $\Theta$-positive representations. 
We discuss the relation between $\mathcal{P}_e(S,\G)$ and $\Theta$-positive representations in Section~\ref{sec:connected}.

\smallskip 

\noindent
{\bf Acknowledgements:} We thank Michel Brion, Steve Bradlow, Brian Collier,
Beatrice Pozzetti, and J\'er\'emy Toulisse for interesting discussions regarding
the topics of this paper. We also thank Nicolas Tholozan and Tengren Zhang for pointing out 
mistakes in previous versions.

We thank the referees who pointed out some inaccuracies in the previous version.
\vskip 0.2 truecm
\noindent
{\bf Outline of the paper:} In Section~\ref{sec:def}, we recall the necessary algebraic material from \cite{Guichard:2018vo,GuichardWienhard_pos} and introduce the main definitions: diamonds, positive configurations, positive circles and positive maps. In Section~\ref{sec:property}, we prove three propositions concerning combinatorial properties of configurations, proper inclusion of diamonds and extension of positive maps ---some of the properties proved here are also in \cite{GuichardWienhard_pos}, but the proofs in the present paper are geometric, while those in \cite{GuichardWienhard_pos} are algebraic. In Section~\ref{sec:metric}, we introduce the diamond metric on diamonds and establish its properties. With these preparations we prove Theorem~\ref{thm_intro:Anosov} and Corollary~\ref{cor_intro:open} in Section~\ref{sec:posano}, Theorem~\ref{thm_intro:nonpara} in Section~\ref{sec:closedness}. In Section~\ref{sec:connected} we discuss the connection with the Cayley components introduced in ~\cite{BCGGO_general}.

\tableofcontents

\section{Definitions}\label{sec:def}
\subsection{Lie algebra notations}

Let $\G$ be a semi-simple group.

The \emph{roots} of~$\G$ are the nonzero weights under the adjoint action of a
Cartan subspace~$\mathfrak{a}$ on the Lie algebra~$\mathfrak{g}$ of~$\G$. They
form a root system~$\Sigma \subset \mathfrak{a}^*$ (nonreduced in some cases)
and the choice of a linear ordering on~$\mathfrak{a}^*$ gives rise to
the set~$\Sigma^+$ of positive roots, and to the set~$\Delta$ of simple
roots. The $\alpha$-weight space will be denoted by~$\mathfrak{g}_\alpha
\subset \mathfrak{g}$.

The parabolic subgroups of~$\G$ are the subgroups conjugated to one of the
standard parabolic subgroups~$\mathsf{P}_\Theta$ (for~$\Theta$ varying in the
subsets of~$\Delta$); namely $\mathsf{P}_\Theta$~is the normalizer in~$\G$ of
the Lie algebra
$\mathfrak{u}_\Theta \defeq \bigoplus_{\alpha\in \Sigma^+ \smallsetminus
  \operatorname{span}(\Delta\smallsetminus\Theta)} \mathfrak{g}_\alpha$.  The
unipotent radical of~$\mathsf{P}_\Theta$ is the subgroup
$\mathsf{U}_\Theta = \exp( \mathfrak{u}_\Theta)$.  A parabolic subgroup is its
own normalizer so that the space~$\gp$ of parabolic subgroups conjugated
to~$\mathsf{P}_\Theta$ is isomorphic to $\G/\mathsf{P}_\Theta$.

The space~$\gp$ is also naturally $\G$-isomorphic to the $\G$-orbit (for the
adjoint action)
of~$\mathfrak{u}_\Theta$ in the space~$\mathbf{L}$ of Lie subalgebras of~$\mk g$. The group
$\operatorname{Aut}(\mk g)$ of automorphisms of~$\mk g$ also acts on~$\mathbf{L}$ and the actions of~$\G$ and of
$\operatorname{Aut}(\mk g)$ on this space are related via the adjoint action
seen as an homomorphism $\G \to \operatorname{Aut}(\mk g)$. For~$\psi$ in $\operatorname{Aut}(\mk g)$ and for $\mathfrak{u}$
in the $\G$-orbit of~$\mathfrak{u}_\Theta$ ({\it i.e.}\ $\mathfrak{u}$~belongs
to~$\gp$), the algebra $\psi(\mathfrak{u})$ may not belong to~$\gp$; in fact
$\psi(u)$~belongs to ${\bf F}_{\psi_*(\Theta)}$ where $\psi_*\colon \Delta\to
\Delta$ denotes the action of~$\psi$ on the set of simple roots (or on the
Dynkin diagram). There is thus a subgroup $\operatorname{Aut}_0(\mk g)$
of $\operatorname{Aut}(\mk g)$ that acts (transitively) on~$\gp$. This group
$\operatorname{Aut}_0(\mk g)$ has better transitive properties than~$\G$, \emph{e.g.}\
it will act transitively on the diamonds that are introduced later. We 
will therefore use several times $\operatorname{Aut}_0(\mk g)$ instead of~$G$.

Two parabolic subgroups~$\mathsf{P}$ and~$\mathsf{P}'$ are called
\emph{transverse} or \emph{opposite} if their intersection $\mathsf{P} \cap
\mathsf{P}'$ is a reductive subgroup ({\it i.e.}\ the unipotent radical of
this intersection is trivial);
this is equivalent to having $\operatorname{UniRad}( \mathsf{P}) \cap
\mathsf{P}'=\{1\}$. In that case, there exists $\Theta\subset \Delta$ such that
the pair $( \mathsf{P}, \mathsf{P}')$ is conjugated to $( \mathsf{P}_{\Theta},
\mathsf{P}^{\mathrm{opp}}_{\Theta})$ where
$\mathsf{P}_{\Theta}^{\mathrm{opp}}$ is the normalizer of $\bigoplus_{\alpha\in \Sigma^+
  \smallsetminus \operatorname{span}(\Delta\smallsetminus\Theta)} \mathfrak{g}_{-\alpha}$. The
intersection~$\mathsf{L_\Theta} \defeq \mathsf{P}_{\Theta} \cap
\mathsf{P}^{\mathrm{opp}}_{\Theta}$ is a Levi factor of~$\mathsf{P}_{\Theta}$
(and of~$\mathsf{P}_{\Theta}^{\mathrm{opp}}$).

We will always work with a parabolic subgroup $\mathsf{P}\simeq
\mathsf{P}_\Theta$ such that $\mathsf{P}_\Theta$ is conjugated to its
opposite~$\mathsf{P}_{\Theta}^{\mathrm{opp}}$; in this situation it makes sense to look at transverse elements in
$\gp  \simeq \G/\mathsf{P}_\Theta$. In particular we will use the
following notation, for $x$ in $\gp$,
\begin{align*}
\mathsf{P}_x&{\defeq}\operatorname{Stab}(x)\ ,\\
{\ms U}_x&{\defeq}\operatorname{UniRad}(\ms P_x)\ ,\\
\Omega_x&{\defeq}\{y\in {\gp }\mid y\text{ is transverse to } x\}\ ,\\
S_x&{\defeq}\gp\smallsetminus\Omega_x\ .
\end{align*}
We will sometimes use that, if $a$ and $b$ are transverse points,
  then $\mathsf{L}_{a,b}\defeq \mathsf{P}_a \cap \mathsf{P}_b$ is a Levi factor of~$\mathsf{P}_a$ and~$\mathsf{P}_b$.
Recall that $\Omega_x$ is an open orbit of $\ms U_x$ and that $S_x$ is a proper algebraic subvariety of~$\gp$.

Given a point~$a$ in~$\gp$, a {\em unipotent pinning}, or \emph{U-pinning} at~$a$,  is an identification~$s$ of~$\ms U_\Theta$ with~$\ms U_a$ that exponentiates
an isomorphism from~$\mathfrak{u}_\Theta$ to~$\mathfrak{u}_a$ which itself is induced
by the
restriction of an
automorphism of the Lie algebra~$\mathfrak{g}$ ({\it i.e.}\ an element of $\operatorname{Aut}_0(\mk g)$). Observe that there are
finitely many U-pinnings up to the action of $\ms L_\Theta$. 

\subsection{Cones and semigroup}

\begin{definition}\label{defi:semigroup}\cite[Theorem~12.2]{GuichardWienhard_pos}
  A \emph{positive structure} with respect to $\gp$ (or a
  \emph{positive structure relative to~$\Theta$}) is a 
  semigroup~$\mathsf{N}$ of~$\mathsf{U}_{\Theta}$  such that,
  denoting~$x$ and~$y$ the points of~$\gp$ whose stabilizers are~$\mathsf{P}_\Theta$
  and~$\mathsf{P}_{\Theta}^{\mathrm{opp}}$ respectively, $\mathsf{N}\cdot y$ is a
  connected 
  component of $\Omega_x \cap \Omega_y$.
   \end{definition}

   In this case, $\mathsf{N}$~is invariant by conjugation by the connected
   component~$\mathsf{L}^{\circ}_{\Theta}$
   of~$\mathsf{L}_\Theta$ and is a sharp semigroup: for any
   $h$, $k$ in $\overline{\ms N}$, if $hk=1$, then $h=k=1$ ({\it i.e.}\ the only
   invertible element in~$\overline{\mathsf{N}}$ is the identity element).

We shall see that given $a$ and $b$ transverse to each other in $\gp$ 
and an identification of $\ms U_\Theta$ with $\ms U_a$ (\emph{i.e.}\ a U-pinning) which sends $\ms N$ to  a subgroup $\ms N_a$ of $\ms U_a$, then $\ms N_a\cdot b$ is a connected component of $\Omega_a\cap\Omega_b$. 

In~\cite{GuichardWienhard_pos} it is proved that, up to the action of $\operatorname{Aut}_0(\mk g)$, the semigroup~$\mathsf{N}$ in the definition is unique. 

We first present some conclusions of the construction of the semigroup~$\mathsf{N}$ that we are going to use in this paper, then concentrate on the notions of {\em diamonds} and {\em positive configurations} that play a crucial role in this paper.

\subsubsection{The parametrization of the positive semigroup}\label{sec:param} 
Theorem~4.5 of~\cite{Guichard:2018vo} and Theorem~1.3 of~\cite{GuichardWienhard_pos}
give a precise description of the possible parametrizations of the
semigroup~$\mathsf{N}$. We recall here the material necessary for our purpose.

\begin{fact}
  \label{fact:param-posit-semi}
  There exist $N\geq 1$ and $\mathsf{C}$ a $\mathsf{L}^{\circ}_{\Theta}$-invariant cone in $(
  \mathfrak{u}_\Theta)^N$ such that the map
  \begin{align*}
    (\mathfrak{u}_\Theta)^N & \longrightarrow \mathsf{U}\\
    (x_1, \dots, x_N) & \longmapsto \exp(x_1)\cdots \exp(x_N)
\intertext{  induces by restriction a $\mathsf{L}^{\circ}_{\Theta}$-equivariant diffeomorphism}
\Psi\colon \mathsf{C}& \longrightarrow \mathsf{N}\ .
  \end{align*}

  Furthermore the stabilizer in~$\mathsf{L}^{\circ}_{\Theta}$ of any point~$h$
  in~$\mathsf{C}$, and therefore of any point~$n$ in~$\mathsf{N}$, is a
  compact subgroup of~$\mathsf{L}^{\circ}_{\Theta}$.
\end{fact}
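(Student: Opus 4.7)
The plan is to reduce the claim to the case-by-case classification of $\Theta$-positive structures established in~\cite{Guichard:2018vo, GuichardWienhard_pos}. For each of the four families of simple Lie groups admitting such a structure, inside $\mathfrak{u}_\Theta$ one identifies distinguished $\mathsf{L}^\circ$-invariant subspaces $\mathfrak{v}_\beta$ indexed by $\beta\in\Theta$, where $\mathfrak{v}_\beta$ is the sum of those weight spaces $\mathfrak{g}_\alpha$ whose root~$\alpha$ has $\beta$-coefficient equal to~$1$ and trivial coefficient on $\Theta\smallsetminus\{\beta\}$. The main structural output of~\cite{Guichard:2018vo} I would use is, for each such~$\beta$, a distinguished $\mathsf{L}^\circ$-invariant pointed open convex cone $\mathsf{c}_\beta\subset\mathfrak{v}_\beta$, together with the description of~$\mathsf{N}$ as the open semigroup generated by $\bigcup_{\beta\in\Theta}\exp(\mathsf{c}_\beta)$.

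With this input, the natural candidate for $(\mathsf{C},\Psi)$ comes from a reduced expression for the longest element of the restricted Weyl group $W_\Theta$ associated to the root system obtained from $\Sigma$ by projecting along $\operatorname{span}(\Delta\smallsetminus\Theta)$. Writing this longest element as $s_{\beta_1}\cdots s_{\beta_N}$ with $\beta_j\in\Theta$, I set $\mathsf{C}\defeq \mathsf{c}_{\beta_1}\times\cdots\times\mathsf{c}_{\beta_N}$ and, following the formula of the statement, $\Psi(x_1,\dots,x_N)\defeq \exp(x_1)\cdots\exp(x_N)$. The $\mathsf{L}^\circ$-equivariance is then immediate from $\ell\exp(x)\ell^{-1}=\exp(\Ad(\ell)x)$ combined with the invariance of each cone, and $\Psi(\mathsf{C})\subset\mathsf{N}$ follows from the semigroup property of~$\mathsf{N}$ applied to its generators.

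The main obstacle is to show that~$\Psi$ is a diffeomorphism onto~$\mathsf{N}$. For local invertibility at $(x_1,\dots,x_N)\in\mathsf{C}$, I would compute $d\Psi$ in terms of the nested adjoint actions of the partial products $\exp(x_1)\cdots\exp(x_j)$: the combinatorics of a reduced expression for the longest element of~$W_\Theta$ guarantees that the subspaces $\Ad(\exp(x_1)\cdots\exp(x_{j-1}))\,\mathfrak{v}_{\beta_j}$ span all of~$\mathfrak{u}_\Theta$, in exact analogy with Lusztig's parametrizations of totally positive unipotent elements. Surjectivity onto~$\mathsf{N}$ then follows from an open/closed argument inside the connected open semigroup~$\mathsf{N}$, with closedness coming from the properness of~$\Psi$; properness is in turn where the pointedness of each $\mathsf{c}_\beta$ and the non-triviality condition on $\overline{\mathsf{N}}$ from Definition~\ref{defi:semigroup} are used. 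Finally, compactness of the $\mathsf{L}^\circ$-stabilizer of $h\in\mathsf{C}$ reduces, via injectivity of~$\Psi$ and the diagonal $\mathsf{L}^\circ$-action, to the fact that $\mathsf{L}^\circ$ acts properly on each pointed $\mathsf{L}^\circ$-invariant open convex cone~$\mathsf{c}_\beta$, a standard feature of invariant pointed cones in representations of reductive groups.
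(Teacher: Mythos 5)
Two remarks before the substance: the paper itself does not prove this statement — it is labelled a \emph{Fact} and is recalled from~\cite[Theorem 4.5]{Guichard:2018vo} and~\cite{GuichardWienhard_pos} — so what you are writing is a reconstruction of the cited construction. Your outline does follow the same route as that construction (distinguished $\mathsf{L}^\circ$-invariant cones in the $\mathsf{L}$-irreducible pieces of $\mathfrak{u}_\Theta$, a reduced expression of the longest element of the finite Coxeter group attached to~$\Theta$, and $\mathsf{C}$ a product of the corresponding cones), consistent with the Remark that follows the Fact in the paper.

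However, two steps of your sketch are genuinely gapped. First, the final reduction is false as stated: $\mathsf{L}^\circ$ does not in general act properly on a single cone $\mathsf{c}_\beta$, and the stabilizer of a point of $\mathsf{c}_\beta$ need not be compact. Already in the split case ($\Theta=\Delta$, Lusztig positivity) each $\mathfrak{v}_\beta=\mathfrak{g}_\beta$ is a line, $\mathsf{c}_\beta$ is a ray, and the stabilizer in $\mathsf{L}^\circ\supset\exp(\mathfrak{a})$ of a point of that ray contains the kernel of the character~$\beta$, a codimension-one subtorus, which is noncompact as soon as the real rank is at least two. Compactness of the stabilizer of $h\in\mathsf{C}$ is a statement about the \emph{simultaneous} stabilizer of all $N$ coordinates: it uses that every $\beta\in\Theta$ occurs in the reduced word and that the joint kernel of the corresponding $\mathsf{L}^\circ$-actions is compact, and this requires its own argument (checked on the classified cases), not a general property of invariant pointed cones. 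Second, the actual content of the theorem — that $\Psi$ is injective and surjective onto $\mathsf{N}$ — is not delivered by ``local invertibility plus an open/closed argument'': to show that an arbitrary word in the generators $\exp(\mathsf{c}_\beta)$ can be brought, uniquely, to the normal form attached to one fixed reduced expression, one needs braid-type exchange relations between the cones (the analogue of Lusztig's relations for total positivity), which is precisely what~\cite{GuichardWienhard_pos} establishes; properness of $\Psi$ likewise does not follow from pointedness of the cones alone. So as a proof the text has real gaps, although it correctly locates where the work is done in the cited papers.
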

The closure $\overline{\mathsf{C}}$ is also $\mathsf{L}^{\circ}_{\Theta}$-invariant and 
Definition~\ref{defi:semigroup} implies that the cone~$\overline{\mathsf{C}}$
is salient, {\it i.e.}\ the intersection of~$\overline{\mathsf{C}}$
and~$-\overline{\mathsf{C}}$ is reduced to~$\{0\}$.

\begin{remark}\label{rem:detail_on_cones}
  More precisely, for every~$\alpha$ in~$\Theta$ an
  $\mathsf{L}^{\circ}_{\Theta}$-invariant cone~$C_\alpha$ has been chosen in the $\mathsf{L}$-irreducible
  factor of~$\mathfrak{u}_\Theta$ corresponding to~$\alpha$ (and $C_\alpha$~is
  open in that factor) and 
  we have that  $\mathsf{C}=C_1 \times C_2 \times \cdots \times C_N$ where $N$~is the length of the longest element in a finite Coxeter group
  associated with~$\Theta$ and, for every $i=1,\dots, N$, $C_i$ is
  one of the cones~$C_\alpha$ \cite[Theorem~1.3]{GuichardWienhard_pos}.
\end{remark}

\subsection{Diamonds}
Let~$a$ and~$b$ be two transverse points in~$\gp$.
\begin{definition}
  A {\em diamond} with {\em extremities~$a$ and~$b$}, associated with a U-pinning~$s_a$ at~$a$, is the subset
\begin{equation*}
s_a(\ms N)\cdot b\ .
\end{equation*}
\end{definition}
The terminology {\em diamond} was coined in~\cite{Labourie:2020tv} in the context of $\G=\ms{SO}(2,n)$. To give an idea, in that context $\gp$ is covered by charts which are identified with the Minkowski space $\mathbb R^{1,n-1}$. Then a diamond is, in a suitable chart, the intersection of the future time cone~$F^+$ of~$a$, with the past time cone~$F^-$ of~$b$.

  In that case there are precisely two diamonds with given extremities. More generally, from \cite[Corollary 13.5]{GuichardWienhard_pos}, it follows that the number of diamonds with given extremities is $2^{\sharp \Theta}$.

\begin{rema}
  We observe that diamonds are semi-algebraic sets and make sense over a real
  closed field.
\end{rema}
We list some first properties of diamonds that are direct consequences of the
definition or are proved
in~\cite[Section~13]{GuichardWienhard_pos}.

\begin{proposition}\label{pro:diam}
\begin{enumerate}
\item A diamond with extremities~$a$ and~$b$ is a connected component
  of $\Omega_a\cap\Omega_b$.
\item Given a diamond $s_a(\ms  N)\cdot b$, there exists a U-pinning~$s_b$
  at~$b$ such that
  \begin{equation*}
    s_a(\ms N)\cdot b=s_b(\ms N)\cdot a\ .
  \end{equation*}
   
\item Given any diamond  $V(a,b)=s_a( \mathsf{N})\cdot b$ then $a$ belongs to
  the closure of $V(a,b)$.
   
\end{enumerate}
\end{proposition}
\begin{proof}
The first item is a consequence of~\cite[Theorems 1.3 and 1.4]{GuichardWienhard_pos}.  The
second item is a consequence of~\cite[Proposition
13.1]{GuichardWienhard_pos}.

The third item follows from the fact that the identity belongs to the closure
of $\ms N$. 
 
\end{proof}

We also remark that
 
\begin{proposition}\label{prop:opp-diamonds}
  Given a diamond~$V$ there is a unique diamond $V^*$ satisfying the following
  property: given
  any U-pinning~$s_b$ at~$b$, if $V=s_b(\ms N)\cdot a$ then
  $V^*=s_b(\ms N^{-1})\cdot a$. The diamond~$V^*$ is called the {\em opposite
    diamond} to~$V$ (one says also that the diamond~$V^*$ is opposite to~$V$). A diamond and its opposite are disjoint, more precisely
  any point in $V$ is transverse to any point in~$V^*$.
\end{proposition} 
\begin{proof}
We just have to remark that the definition of the opposite diamond does not
depend on the choices. More precisely, given two U-pinnings~$s_a$ and~$s_b$, if
\[V=s_b(\ms N)\cdot a=s_a(\ms N)\cdot b \ ,
\]
then 
\[s_b(\ms N^{-1})\cdot a=s_a(\ms N^{-1})\cdot b\ ;
\]
this holds by~\cite[Section~13]{GuichardWienhard_pos}.

The last point comes from~\cite[Remark 4.9]{Guichard:2018vo} and from \cite[Section 13.6]{GuichardWienhard_pos}. In particular, if $x\in V$, then $x=s_b(n)\cdot a$ with $n\in \ms N$, while if  $y\in V^*$, then $y=s_b(m^{-1})\cdot a$ with $m\in \ms N$. Thus
\begin{equation*}
 x=s_b(nm)\cdot y\ .
\end{equation*}
Since $\mathsf{N}$~is a semigroup, this means that $x$~belongs to a diamond
with extremities~$y$ and~$b$. By the first point of Proposition~\ref{pro:diam}, $x$~is transverse to~$y$.
\end{proof}

As a consequence of the proposition, if $c$~is an element in a diamond with extremities~$a$ and~$b$, we will denote by 
\begin{itemize}
\item $V_c(a,b)$ the unique diamond containing~$c$ with extremities~$a$ and~$b$.
\end{itemize}
 
Note that, for any~$d$ in~$V_c(a,b)$, one has $V_d(a,b)=V_c(a,b)$; also
$V_c(b,a) = V_c(a,b)$.
 
In addition, $V^*_c(a,b)$ is the diamond opposite to the diamond containing~$c$.

As an immediate consequence of the semigroup property we obtain the following result that we shall use freely:
\begin{lemma}[\sc Nesting property]\label{lem:semigroup}
  Let $c$ be a point in a diamond with extremities~$a$ and~$b$. 
 \begin{enumerate}
 	\item\label{item1:lem:semigroup} Then there exists a
  unique  diamond $V(a,c)$ with extremities~$a$ and~$c$ such that
  \begin{equation*}
    V(a,c)\subset V_c(a,b) \ .
  \end{equation*}
   Furthermore there is a neighborhood $U$ of $a$ in~$\gp$ such that $$
  U\cap V(a,c)=U\cap V_c(a,b). $$
\item\label{item2:lem:semigroup}  Moreover, if $V(c,b)$ is the unique diamond with extremities~$c$ and~$b$ included in $V_c(a,b)$ then 
  \begin{equation*}
  	V(a,c)\cap V(c,b)= \emptyset\ .
  \end{equation*}
  \item\label{item3:lem:semigroup}  Finally $a$~belongs to the opposite diamond $V^*(c,b)$ and the
    diamond $V(a,c)$ is contained in~$V^*(c,b)$.
   
 \end{enumerate} 
\end{lemma}

\begin{figure}[ht] 
  \begin{center}
    \includegraphics[width=3 in]{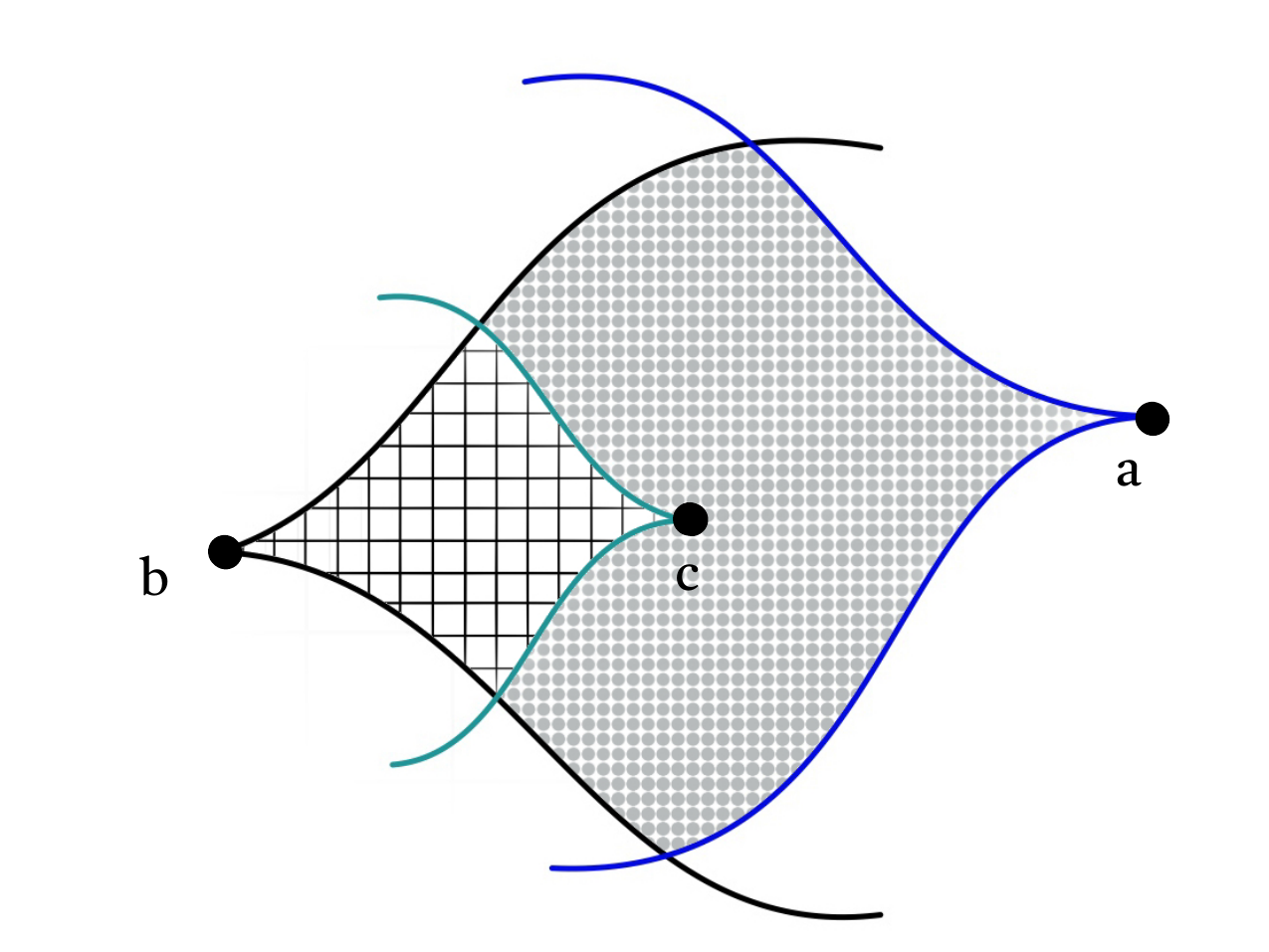}
     \caption{The nesting of $V(c,b)$ in $V_c(a,b)$}\label{fig:nesting}
  \end{center}
  \end{figure}

The proof of Lemma~\ref{lem:semigroup} will use the following statement.

\begin{lemma}
  \label{lem:connected-comp-local0}
  Let~$a$ and~$b$ be two transverse points of~$\gp$ and $V_0$ a diamond with extremities $a$ and $b$. Then, there
  exists a basis ~$\mathcal{B}$  of
  neighborhoods of~$a$ such that for every~$U$ in~$\mathcal{B}$, the intersection $U\cap V_0$ is connected and nonempty.
 \end{lemma}

\begin{proof}
  Up to acting by an element of~$\G$, we can assume that the stabilizer of~$b$
  is~$\mathsf{P}_\Theta$ and that the stabilizer of~$a$
  is~$\mathsf{P}_{\Theta}^{\mathrm{opp}}$. The map from $\mathfrak{u}_\Theta$ to $\Omega_b$ given by  $
  x \mapsto \exp(x)\cdot a$ is a
  $\mathsf{L}_\Theta$-equivariant diffeomorphism.

  Consider the decomposition $\mathfrak{u}_\Theta= \bigoplus_{i} V_i$ into 
  $\mathsf{L}_\Theta$-irreducible factors. Let us fix an auxiliary Euclidean
  norm $\Vert\cdot\Vert$ on~$\mathfrak{u}_\Theta$ such that the previous
  decomposition is orthogonal. There is a one-parameter subgroup $\Lambda=\{
  \lambda_t\}_{t\in \mathbb{R}}$ of~$\mathsf{L}_\Theta$ such that, for all~$i$ and for
  all~$v$ in~$V_i$, $\lambda_t\cdot v = e^{n_i t}v$ for some positive numbers $n_i$.

  Let $S$ be the unit sphere in $\mathfrak{u}_\Theta$ for
  $\Vert\cdot\Vert$. Then the map from $S\times \mathbb{R}$ to $
  \Omega_b\smallsetminus \{a\}$, given by  
  $$ 
  g\colon   (v,t)\mapsto
  \exp(\lambda_t \cdot v) \cdot a \ , $$ 
  is a diffeomorphism  satisfying that for all $v$ in $S$, all real numbers  $t$ and  $s$  
  $$g(v,t+s)=\lambda_s \cdot g(v,t)\ .$$
  Thus since $\Omega_a\cap\Omega_b$ is 
  $\Lambda$-invariant we have the following property:  for
  all~$v$ in~$S$ and all~$t$, $t'$ in~$\mathbb{R}$, $g(v,t)$ belongs
  to~$\Omega_a\cap\Omega_b$ if and only if  $g(v,t')$ belongs
  to~$\Omega_a\cap\Omega_b$.

 Thus, there is a connected open $\Omega_0$ in $S$, such that the diamond $V_0$ ---being a connected component of $\Omega_a\cap \Omega_b$ by the first item of Proposition~\ref{pro:diam}--- is the image of $\Omega_0\times \mathbb{R}$ by the map $g$.  Let finally  $O_t$ be the images 
  of $S\times (-\infty, t)$ by $g$ and $U_t=O_t\cup\{a\}$. Then $\{U_t\}_{t\in\mathbb R}$ is a family of neighborhoods of $a$ with the wanted property.
\end{proof}

\begin{proof}[Proof of Lemma~\ref{lem:semigroup}]   Let us first construct diamonds $V^0(c,b)$ and $V^0(a,c)$ included in $V_c(a,b)$.   Let us write $V_c(a,b)=\ms N_b\cdot a= \ms N_a\cdot b$ and  
 consider the diamonds 
 \begin{equation*}
   V^0(c,b)=\ms N_b\cdot c\ , \ V^0(a,c)= \ms N_a\cdot c\ .
 \end{equation*}
By construction  $c=n_b\cdot a= n_a\cdot b$ with $n_b\in\ms N_b$ and $n_a\in\ms N_a$. By the semigroup property 
\begin{equation*}
\ms N_b\cdot n_b\subset \ms N_b\ , \ \ms N_a\cdot n_a\subset \ms N_a\ ,
\end{equation*}
which leads to the inclusions
\begin{equation*}
V^0(c,b)\subset V_c(a,b) \ , \ \ V^0(a,c)\subset V_c(a,b) \ .
\end{equation*}

We now prove that these specific diamonds are disjoint.
By the construction  and the inclusion above both $V^0(a,c)$ and $V^0(b,c)$ are
connected components of $V_c(a,b)\smallsetminus S_c$. It follows that they
are either equal or disjoint. By the sharpness property of~$\mathsf{N}$, the identity element
does not belong to the closure of $\ms N\cdot n_a$. Let thus $O$ be an open
set in $\ms U_a$ containing the identity and with trivial intersection with
$\ms N\cdot n_a$. Then $O\cdot b$ is a neighborhood of~$b$ that does not
intersect $\ms N_a\cdot c=\ms N_an_a\cdot b$. Thus $b$~does not belong to
the closure of $V^0(a,c)$. From the last item of Proposition~\ref{pro:diam},
$V^0(a,c)$ is hence different from $V^0(c,b)$ and by the above discussion they are disjoint:
\[V^0(a,c)\cap V^0(b,c)=\emptyset\ . 
\]
This concludes item~(\ref{item2:lem:semigroup}) of the lemma.

Let us prove next the  existence of the neighborhood $U$. Denote for any open set~$V$, $\partial V\defeq \overline{V}\smallsetminus V$ and denote~$V^c$ the complementary of~$V$ and recall that
\begin{align*}
\partial(V\cap W)\subset &\overline{V}\cap \overline{W}\smallsetminus (V\cap W)\\
=& \overline{V}\cap \overline{W} \cap (W^c \cup V^c)\\
=& (\overline{V}\cap \overline{W} \cap W^c) \cup (\overline{V}\cap \overline{W} \cap  V^c)\\
=&(\partial V\cap \overline{W})\cup (\partial W\cap \overline{V})\ .
\end{align*}
Let $V(a,c)$ be any diamond with extremities~$a$ and~$c$ included in
$V_c(a,b)$.  Let~$U$ be a neighborhood of~$a$ such that
\begin{itemize}
\item the intersection of~$\overline{U}$ with $S_c\cup S_b$ is empty,
\item $V_c(a,b)\cap U$ is connected and nonempty.
\end{itemize}
The existence of this open set~$U$ is guaranteed by Lemma~\ref{lem:connected-comp-local0}.
From the first item we have that
\begin{align*}
\partial (V(a,c)\cap U)\subset((\partial V(a,c))\cap \overline{ U}) \cup (\overline{V}(a,c)\cap\partial U) \ \subset& \ (S_a\cup\partial U)\ . 
\end{align*}
From the inclusion $V(a,c)\subset V_c(a,b)$ we have 
\begin{align*}
\partial (V(a,c)\cap U)\subset& \ \overline{V_c(a,b)\cap U}\ .
\end{align*}
Since $S_a\cup\partial U$ is included in the complementary of $V_c(a,b)\cap U$ we furthermore have 
\begin{align*}
(S_a\cup\partial U)\cap (\overline{V_c(a,b)\cap U})\ \subset\  & \partial (V_c(a,b)\cap U)\ .
\end{align*}
Thus combining these inclusions, we get 
\begin{align*}
\partial (V(a,c)\cap U)\subset (S_a\cup\partial U) \cap (\overline{V_c(a,b)\cap U})	\subset \partial (V_c(a,b)\cap U) \ .	
\end{align*}
Now a simple connectedness argument show that if $A$ and $B$ are two open sets, with $B$ connected, $A\subset B$ and $\partial A\subset \partial B$, then $A=B$. Thus, in our case, 
$$V(a,c)\cap U=V_c(a,b)\cap U\not=\emptyset .$$
Since this is true for all diamonds with extremities $a$ and $c$ included in
$V_c(a,b)$ and since diamonds with the same extremities  are either disjoint
of equal, we finally conclude that there is a unique diamond with extremities
$a$ and $c$ included in $V_c(a,b)$. This concludes
item~(\ref{item1:lem:semigroup}) of the lemma.

For the last item, observe that $$a= n_b^{-1}c\in
\mathsf{N}_b^{-1}c=V^*(c,b)\ .$$
Since $a$~belongs to the closure of $V(a,c)$, we have hence $V(a,c)\cap
V^*(c,b)\neq \emptyset$. Furthermore $V(a,c)\subset \Omega_c$ and
$V(a,c)\subset V(a,b)\subset \Omega_b$; this means that the connected set
$V(a,c)$ is contained in $\Omega_a\cap \Omega_b$. Therefore $V^*(b,c)$ is the
connected component of  $\Omega_a\cap \Omega_b$ containing $V(a,b)$: this is
the sought for inclusion.
\end{proof}

\subsection{Positive configurations}
The following definition plays a central role in this article:

Let $p\geq
3$ and equip $\{1,\ldots, p\}$ with the usual cyclic order. 
\begin{definition}[\sc Positive configuration]
  \label{def:pos}
We say that a configuration $(a_1,\ldots, a_p)$ in $\mathbf{F}^{p}_{\Theta}$ is {\em positive}, if  there
exist diamonds $V_{i,j}$ with extremities~$a_i$ and~$a_j$ for all $i\not=j$ such that 
\begin{enumerate}
\item\label{item:1:def:pos} $V_{i,j}=V^*_{j,i}$,
\item\label{item:2:def:pos} $a_j$ belongs to $V_{i,k}$, if $(i,j,k)$ is cyclically oriented,
 
\item we have $V_{i,j}\subset V_{i,k}$ and $V_{j,k}\subset V_{i,k}$, if
  $(i,j,k)$ is cyclically oriented.
 
\end{enumerate}
\end{definition}

\begin{figure}[ht]
  \includegraphics[width=5 in]{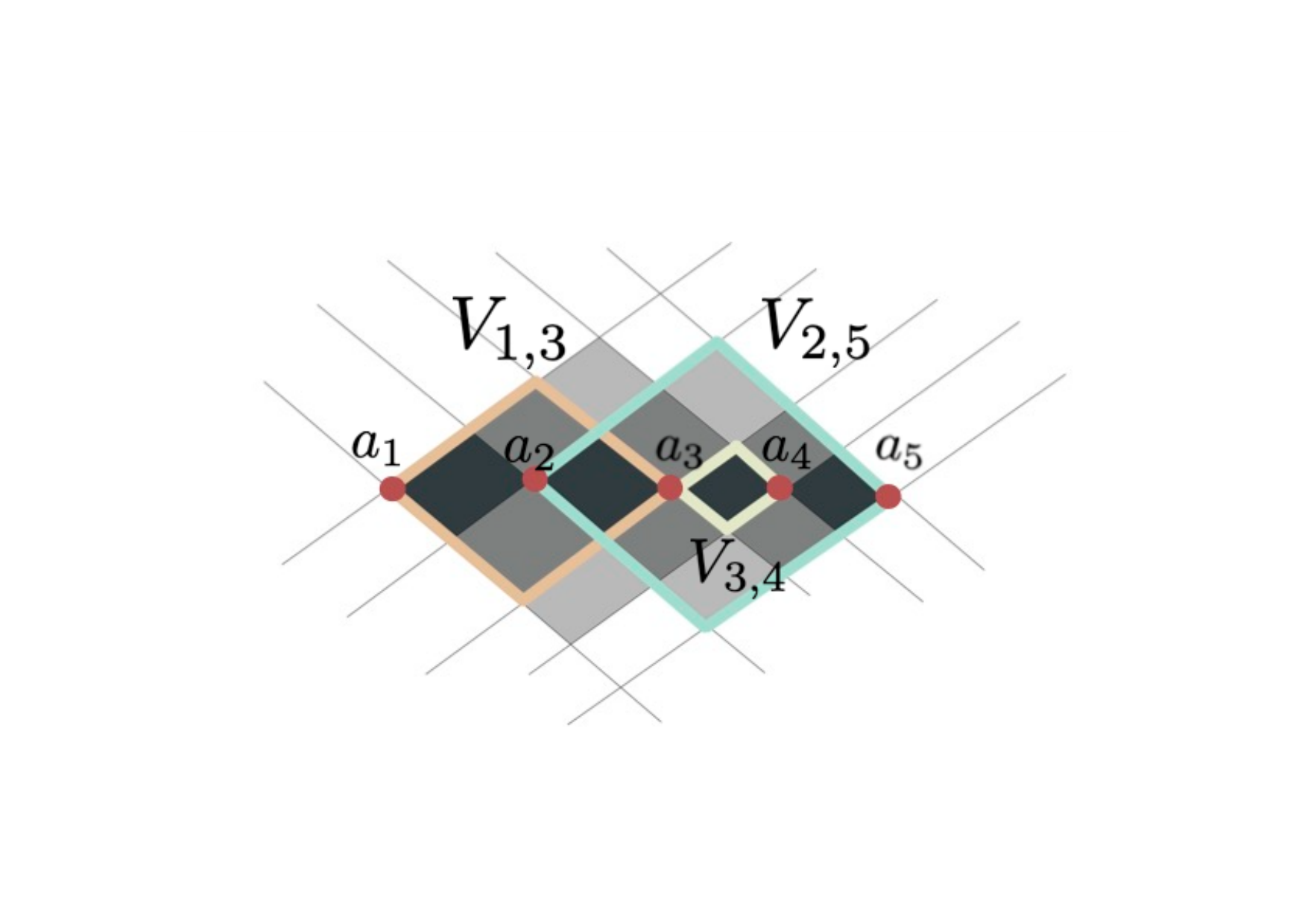}
  \caption{A positive $5$-configuration and some diamonds}
  \label{fig:pos-quadr}
\end{figure}

Proposition~\ref{pro:basic} will give easier criteria to understand positive
triples and quadruples and will show that the definition is equivalent to the definition given in the introduction.

Observe that, by properties~(\ref{item:2:def:pos}) and~(\ref{item:1:def:pos})
above, the choice of $V_{i,k}$  among diamonds with extremities~$a_i$ and~$a_k$ is forced by the cyclic ordering. Furthermore, the fact that
$V_{i,k}$ does not depend on the index~$j$ between~$i$ and~$k$ involves the
positivity of a subquadruple.
It thus follows that if $(a_1,\ldots,a_p)$ is such that every cyclically oriented subquadruple is positive then  $(a_1,\ldots,a_p)$ is positive.

By construction, every subconfiguration of a positive configuration is
positive. On the real projective line, a configuration of $p$ points with
$p>3$ is positive exactly if it is cyclically oriented, and a triple is
positive if it consists of pairwise distinct elements. 

Moreover 
\begin{proposition}
  Positivity of configurations is invariant under cyclic permutation and under
  the
  order reversing permutation. In particular
  \begin{enumerate}
  \item to be positive for a triple is invariant under all permutations,
  \item to be positive for a quadruple is invariant under the dihedral group.
  \end{enumerate}
\end{proposition}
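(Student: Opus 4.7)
My plan is to check that positivity, as given in Definition~\ref{def:pos}, is symmetric under the two generators of the dihedral group acting on the cyclic sequence $(a_1,\ldots,a_p)$: cyclic rotation and order reversal. Cyclic invariance is essentially tautological: if $(a_1,\ldots,a_p)$ is positive and witnessed by diamonds $V_{i,j}$, then the rotated sequence $(a_2,\ldots,a_p,a_1)$ is positive with the same diamonds, reindexed. The cyclic order on the new indices is identical to the restriction of the old cyclic order, so conditions (1)--(3) transport verbatim.

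The substantive step is order reversal. Given a positive configuration $(a_1,\ldots,a_p)$ with witnessing diamonds $V_{i,j}$, I would define, for the reversed tuple, the candidate diamonds
\[
\tilde V_{i,j} \defeq V_{j,i} = V^*_{i,j},
\]
the last equality by condition~(1). Here I use Proposition~\ref{pro:diam}(3) to know that the opposite diamond is itself a diamond with the same extremities, and that opposition is an involution. Condition (1) for $\tilde V$ then reduces to the involutivity $(V^*_{i,j})^* = V_{i,j}$. For conditions~(2) and~(3), note that a triple (resp.\ quadruple) of indices is positively oriented in the reversed cyclic order exactly when the reverse triple (resp.\ reverse quadruple) is positively oriented in the original order. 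So condition~(2) for $\tilde V$ becomes: $a_i \in \tilde V_{j,k}=V_{k,j}$ whenever $(k,i,j)$ is oriented in the original order, which is precisely the original condition~(2). Similarly condition~(3) for $\tilde V$ translates into the inclusion $V_{j,i}\subset V_{m,k}$ whenever $(m,j,i,k)$ is oriented in the original order, which is the original condition~(3) applied to that oriented quadruple.

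Once cyclic rotation and order reversal are both known to preserve positivity, the final assertions follow from the group theory of the dihedral group: the group generated by a rotation of order $p$ and a reflection has order $2p$, equalling the dihedral group $D_p$. For $p=3$ this coincides with the full symmetric group $S_3$, giving invariance under all permutations of a triple; for $p=4$ it is the dihedral group of order~$8$, which is exactly the statement of item~(2).

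I do not expect a substantive obstacle here; the only delicate point is to keep straight the interplay between ``opposite diamond'' and the swap $V_{i,j}\leftrightarrow V_{j,i}$ in condition~(1), which is resolved by Proposition~\ref{pro:diam}(3). In particular, reversing cyclic order swaps each diamond with its opposite, so a configuration and its reverse witness positivity using genuinely different diamonds, related by $V\mapsto V^*$.
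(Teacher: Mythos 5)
Your proof is correct and follows essentially the same route as the paper: cyclic invariance is immediate from the definition, and for the reversal one takes the new diamonds to be the opposites $V^*_{i,j}$ (equivalently $V_{j,i}$), exactly the choice made in the paper, with your write-up simply spelling out the verification of conditions (1)--(3) and the dihedral-group bookkeeping that the paper leaves implicit.
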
 
\begin{proof}
  The definition is invariant under cyclic transformations. If $\sigma_0$ is  the reverse ordering,  we choose the new diamonds $V^\circ_{i,j}=V^*_{\sigma_0(i),\sigma_0(j)}$. 
\end{proof}

\subsection{Positive circles and  $\mathsf{PSL}_2({\mathbb R})$}

Let $\ms H$ be a subgroup in  $\G$ locally isomorphic to $\psld$. An {\em $\ms
  H$-circle} in $\gp$ is a closed  $\ms H$-orbit, it can be parametrized by a
{\em circle map} which is a $\ms H$-equivariant map from ${\rp}$ to $\gp$. The
group $\ms H$ is {\em proximal} if it contains a proximal element in~$\gp$,
\emph{i.e.}\ an element having an attracting fixed point on~$\gp$.

\begin{proposition}[\sc $\ms H$-circle]\label{pro:pref-circle}
  Given a positive structure, there exists $\mathcal H$, 
  an $\operatorname{Aut}_0(\mk g)$-orbit of pairs $(\ms H,C)$ such that $\ms
  H$ is a
  subgroup  of $\G$  locally isomorphic to $\psld$, $C$ is an $\ms H$-circle, satisfying the following properties
  \begin{enumerate}
  \item\label{item1:pro:pref-circle} $\ms H$  has a compact centralizer
    in~$\G$;
  \item \label{item:pro:pref-circle:Ctransverse} Any two distinct points
    on~\(C\) are transverse;
  \item\label{item3:pro:pref-circle} Given 
    a diamond~$V$ with
    extremities~$a$ and~$b$, there exists $(\ms H,C)$
    in $\mathcal H$ with~$C$ containing~$a$ and~$b$, and such that
    $C$~intersects the diamond~$V$. Furthermore
    \begin{itemize}[leftmargin=*]  
    \item If $c$ is a point in $C$ different from~$a$ and~$b$, then
      $(a,c,b)$ is a positive triple and
      $$V_c (a,b)\cap C\  \hbox{ and } V^*_c (a,b)\cap C ,$$
      are the two connected components of $C\smallsetminus\{a,b\}$.
    \item If  $d$~belongs to the connected component of
      $C\smallsetminus\{c,b\}$ not containing~$a$, then
      $$ V_d(b,c)\subset V_d(a,b)\ .$$
    \end{itemize}
  \item\label{item4:pro:pref-circle}  Given any three pairwise distinct points $a$, $b$, and~$c$
    in~$\gp$. Then there is at most one element $(\mathsf{H},C)$
    of~$\mathcal{H}$ such that $C$~contains $a$, $b$, and~$c$.
  \end{enumerate}     
\end{proposition}
Note that in point~(\ref{item3:pro:pref-circle}) $V$~needs to be equal to~$V_c(a,b)$ or~$V_{c}^{*}(a,b)$.

\begin{proof}
  Let $s_b$ be a U-pinning at~$b$ such that $V= s_b( \mathsf{N})\cdot
  a$.
  One just picks the Lie subgroup associated with an
  $\mathfrak{sl}_2$-triple given by the Jacobson--Morozov theorem applied to
  a nilpotent element~$x$ chosen as in \cite[Section 7]{GuichardWienhard_pos} so that $\ms N_2=s_b(\exp(\mathbb R_{>0}
  x))$ is included in~$\ms N$. The corresponding $\mathfrak{sl}_2$-triple is the $\Theta$-principal $\mathfrak{sl}_2$-triple introduced in 
 \cite[Section 7]{GuichardWienhard_pos}. Item~(\ref{item1:pro:pref-circle})
 follows now from Fact~\ref{fact:param-posit-semi}. 

 Item~(\ref{item:pro:pref-circle:Ctransverse}) follows from \cite[Lemma 7.2]{GuichardWienhard_pos}.

   For item~(\ref{item3:pro:pref-circle}), the existence is immediate by
 $\operatorname{Aut}_0(\mk g)$-transitivity. The two connected components of
 $C\smallsetminus\{a,b\}$ are $\ms N_2\cdot a$ and ${\ms N}_{2}^{-1}\cdot a$
 and are thus included in diamonds opposite to each other.
  
  Moreover, for the last statement in item~\ref{item3:pro:pref-circle}, let us write $c=n\cdot a$ with~$n$ in~$\ms N_2$.  Observe that, by a deformation argument 
  $$
  V_{n\cdot d}(c,b)=V_d(c,b)\ \text{and}\ V_d(a,b)=V_c(a,b)\ .
  $$ 
  Then $V_d(a,b)=\mathsf{N}\cdot a$ and
  $$
  V_d(c,b)=V_{n\cdot d}(c,b)= n V_d(a,b)= n\ms N\cdot a\subset \mathsf{N}\cdot a=V_d(a,b)\ ,
  $$
  where the inclusion holds by the semigroup property.

  For the item~(\ref{item4:pro:pref-circle}), let us consider~$Z$ the stabilizer of~$a$, $b$, and~$c$. Then
  $Z$ is precisely the stabilizer in $\mathsf{L}_{a,b}$ of the element~$n$
  in~$\mathsf{N}_2$ such that $c=n\cdot a$. Since $\mathsf H$ is determined by $\mathsf N_2$, this implies that $Z$ is in fact
  the centralizer of~$\mathsf{H}$.
  This concludes the
  proof.
   
\end{proof}

\begin{remark}
  More detail on the construction of the $\Theta$-principal  $\mathfrak{sl}_2$-triple can be found in
  \cite[Section 7]{GuichardWienhard_pos}. 
  Note that there are others $\mathfrak{sl}_2$-triples which induce positive
  maps from ${\rp}$ to $\gp$. For example, if $\G$ is a split real Lie group,
  the principal  $\mathfrak{sl}_2$-triple gives rise to such a   map. 
\end{remark}

We fix once and for all such an $\operatorname{Aut}_0(\mk g)$-orbit $\mathcal
H$. 

As an important example of positive configuration, we have
\begin{proposition}\label{pro:conf-circ} Let $(\mathsf{H}, C)$ be in~$\mathcal{H}$. Any cyclically ordered configuration
  of points on~$C$ is positive.
\end{proposition}
\begin{proof} It is enough to prove the results for triples and quadruples. 

Let first $(a_0,a_1,a_2)$ be a triple on~$C$.  By
item~(\ref{item3:pro:pref-circle}) in Proposition~\ref{pro:pref-circle}, $a_{i+1}$ belongs to a diamond with
extremities~$a_i$ and~$a_{i+2}$. Let us define (where indices are taken modulo~$3$)
$$
V_{i,i+2}\defeq V_{a_{i+1}}(a_i,a_{i+2})\ , V_{i,i+1}\defeq V^*_{i+1,i}\ .
$$

Then the properties of Definition~\ref{def:pos} are obviously
satisfied, and the triple is positive. 

Let now consider  $(a_0,a_1,a_2,a_3)$  a  quadruple on~$C$,  
such  that $a_{i+1}$ and $a_{i+3}$ belongs to different components of
$C\smallsetminus\{a_{i},a_{i+2}\}$.   Observe that by a deformation
argument we have 
$$
V_{a_{i+2}}
(a_i,a_{i+3})
=V_{a_{i+1}}(a_i,a_{i+3})\ .
$$
We now  define 
\begin{align*}
	V_{i,i+2}&\defeq V_{a_{i+1}}(a_i,a_{i+2})\ , \\
V_{i,i+3}&\defeq V_{a_{i+2}}(a_i,a_{i+3})
=V_{a_{i+1}}(a_i,a_{i+3})\ , \\
V_{i,i+1}&\defeq V^{*}_{i+1,i}\ .
\end{align*}
 
It then follows from  item~(\ref{item3:pro:pref-circle}) of Proposition~\ref{pro:pref-circle} that $V_{a_{i+3}}(a_i,a_{i+2})=V^*_{a_{i+1}}(a_i,a_{i+2})$, and thus that 
 $V_{i+2,i} = V_{i,i+2}^{*}$. 
 
 From the equality $V_{i,i+1}=V_{i+1,i}^*$ and from  item~(\ref{item3:pro:pref-circle})  of
 Proposition~\ref{pro:pref-circle},  $V_{i+1,i+2}\cap C$ is the connected component of
 $C\smallsetminus\{a_{i+1},a_{i+2}\}$ not containing $a_{i}$ and
 $a_{i+3}$. Let~$d$ be in $V_{i+1,i+2}\cap C$, then  
 $$
 V_{i+1,i+2}=V_d(a_{i+1},a_{i+2})\subset V_d(a_{i},a_{i+3})=V_{i,i+3}\ ,
 $$ 
 where, for the inclusion, we applied twice the last part of the item~(\ref{item3:pro:pref-circle}) of Proposition~\ref{pro:pref-circle}.
 
This concludes the proof.
\end{proof}

\subsection{Positive  maps}
        
Let $S$ be a cyclically ordered set containing at least three points. 
\begin{definition}[\sc Positive map]
A map~$f$ from~$S$ to~$\gp$ is {\em positive} if the image of every cyclically ordered  quadruple is a positive quadruple, and the image of every cyclically ordered triple is a positive triple.\footnote{When $S$~has more than three points, the second requirement follows from the first.}
\end{definition}

Observe then  that the image of every cyclically ordered configuration by a
positive map is a positive configuration. 

By Proposition~\ref{pro:conf-circ}, for any $(\mathsf{H}, C)$
in~$\mathcal{H}$, $C$ ---seen as a map from~$\rp$ to~$\gp$---
 is positive.

\section{Properties of positivity}\label{sec:property} We prove in this
section, three main propositions concerning positivity:
\begin{itemize}[leftmargin=*]
\item The first one, Proposition~\ref{pro:basic}, gives various combinatorial
  properties of positive triples, quadruples and configurations;
	\item The second one, Proposition~\ref{pro:bounded}, gives information about the limit of diamonds included in a given diamond;
	\item The last one, Proposition~\ref{pro:ext}, shows that positive maps share the property of monotone maps: they coincide on a dense subset with a left-continuous positive map.
\end{itemize}
We also establish that certain elements in~$\G$ are $\gp$-proximal using positivity.

Several of the combinatorial properties of positive configurations have been
addressed in \cite{GuichardWienhard_pos} with a more algebraic approach, for reader's convenience, we
provide here geometric proofs using the nesting properties of diamonds.

\subsection{Combinatorics of positivity}

The  next proposition gives fundamental properties of
positive triples and quadruples.
\begin{itemize}
 \item 	The first one gives an easy criterion for positivity of triples,  while the second and third concern quadruples. In particular, this shows that the definition of positivity given in the introduction is equivalent to Definition~\ref{def:pos}.
 \item   The fourth one gives a recursive way to build positive configuration.
 \item The fifth and sixth give ``exclusion'' properties that are important in
   the study of positivity though they are not used in this paper.
 \end{itemize}

We are going to prove this proposition and its corollary  in the context of a
group defined over $\mathbb R$, although by Tarski--Seidenberg
Theorem, the statements
will be true over every real closed field.

\begin{proposition}[\sc Combinatorial properties]\label{pro:basic}
  \begin{enumerate}[leftmargin=*]
  \item\label{it:basic1} Assume that~$a$ and~$b$ are transverse and that
    $c$~belongs to a diamond with extremities~$a$ and~$b$, then $(a,b,c)$ is
    positive.
  \item\label{it:basic3}  Assuming $(a,x_0,b)$ and $(a,y_0,b)$ are positive
    then $(a,x_0,b,y_0)$  is positive if and only if
    $V_{x_0}(a,b)=V^*_{y_0}(a,b)$.
  \item\label{it:basic2}  Assuming $(a,c,b)$ is positive and $d$ belongs to
    $V^*_a(c,b)$, then $(a,c,d,b)$ is positive.
  \item\label{it:basic4} More generally, assume that $(x_0,x_1,\ldots,x_p)$ is a positive
    configuration and that $y\in V^*_{x_2}(x_0,x_1)$ then
    \[	(x_0,y,x_1,\ldots,x_p)\ ,
    \]
    is a positive configuration.
  \item\label{it:basic5}    If $(a,b,c,d)$
    is
    positive, then $(a,c,b,d)$ is not positive. \label{axpos:ex3}
  \item \label{it:basic6} Let $x_0$, $x_1$, and
  $x_2$ be three points such that $(a,x_i,b)$ is positive ($i=0,1,2$), then the three  quadruples $(a,x_0,b,x_1)$, $(a,x_1,b,x_2)$, and
  $(a,x_2,b,x_0)$ cannot all be positive. 
\end{enumerate}
\end{proposition}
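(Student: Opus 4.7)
The plan is to prove all six items by explicitly constructing, in each case, the distinguished diamonds $V_{i,j}$ required by Definition~\ref{def:pos} and verifying the three structural conditions. Three tools are used repeatedly: the nesting Lemma~\ref{lem:semigroup}, the opposite-diamond statement Proposition~\ref{pro:diam}(3), and the fact that exactly two diamonds share a given pair of transverse extremities. A preliminary observation worth recording is the following \emph{dictionary}, obtained by unfolding the oriented-triple clause of Definition~\ref{def:pos}: in a positive configuration $(\alpha_1,\dots,\alpha_p)$, the diamond $V_{\alpha_i,\alpha_j}$ is characterized as the unique diamond with extremities $\alpha_i,\alpha_j$ containing exactly those vertices $\alpha_k$ whose index $k$ lies strictly between $i$ and $j$ in the forward cyclic order. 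For a positive triple $(\alpha,\beta,\gamma)$ this forces $V_{\beta,\alpha}=V_\gamma(\alpha,\beta)$, $V_{\alpha,\gamma}=V_\beta(\alpha,\gamma)$, $V_{\gamma,\beta}=V_\alpha(\gamma,\beta)$, and their opposites.

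For item~\eqref{it:basic1}, the dictionary forces the definitions $V_{b,a}\defeq V_c(a,b)$, $V_{a,c}\defeq V_b(a,c)$, $V_{c,b}\defeq V_a(c,b)$, each of which exists by Proposition~\ref{pro:diam}(3) thanks to the pairwise transversality of $a,b,c$; the positivity conditions then reduce to these defining properties. For item~\eqref{it:basic2}, one first identifies the diamond $V^a(c,b)$ of the positive triple $(a,c,b)$ as the unique nested subdiamond of $V_c(a,b)$ with extremities $c,b$ via Lemma~\ref{lem:semigroup}, using that $a\notin V_c(a,b)$ to rule out $V_a(c,b)$. Applying Lemma~\ref{lem:semigroup} to $d\in V^a(c,b)$ produces $V(c,d),V(d,b)\subset V^a(c,b)$, and applying it to $d\in V_c(a,b)$ produces $V(a,d)\subset V_c(a,b)$ (with the two versions of $V(d,b)$ agreeing by uniqueness of nested subdiamonds). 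These, together with the three diamonds inherited from the triple $(a,c,b)$, provide the six diamonds for the quadruple $(a,c,d,b)$; each clause of Definition~\ref{def:pos} is then matched to a nesting already produced, using the dictionary to identify the ``which diamond contains which vertex'' at each stage.

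For item~\eqref{it:basic3}, the forward implication is a direct reading: the dictionary applied to the quadruple $(a,x_0,b,y_0)$ forces $V_{a,b}=V_{x_0}(a,b)$ and $y_0\in V^*_{x_0}(a,b)$. The backward implication is item~\eqref{it:basic2} after cyclic rotation: the positive triple $(a,x_0,b)$ equals the positive triple $(x_0,b,a)$ by permutation invariance (from the previous Proposition), and the hypothesis $y_0\in V^*_{x_0}(a,b)=V^{x_0}(b,a)$ is exactly the hypothesis of item~\eqref{it:basic2} applied to $(x_0,b,a)$ with $d=y_0$; the resulting positive quadruple $(x_0,b,y_0,a)$ is $(a,x_0,b,y_0)$ cyclically. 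For item~\eqref{it:basic4}, the dictionary applied to the original configuration yields $x_2,\dots,x_p\in V_{x_1,x_0}$, so $V^{x_2}(x_0,x_1)=V^{x_p}(x_0,x_1)$; then item~\eqref{it:basic2} applied to the positive sub-triple $(x_p,x_0,x_1)$ with $d=y$ yields positivity of $(x_p,x_0,y,x_1)$. Define the remaining new diamonds by $V_{y,x_i}\defeq V_{x_1}(y,x_i)$ for $i\geq 2$ (well-defined since $x_1$ is transverse to both $y$ and $x_i$), and verify the nestings for the enlarged configuration by transporting those of the original one through Lemma~\ref{lem:semigroup}.

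Items~\eqref{it:basic5} and~\eqref{it:basic6} follow from item~\eqref{it:basic3} combined with the disjointness of opposite diamonds. In item~\eqref{it:basic5}, the dictionary applied to $(a,b,c,d)$ forces both $c,d$ into $V^*_{a,b}$, so $V_c(a,b)=V_d(a,b)$; were $(a,c,b,d)$ also positive, item~\eqref{it:basic3} applied to the positive triples $(a,c,b),(a,d,b)$ (obtained via permutation invariance of positive triples) would force $V_c(a,b)=V^*_d(a,b)$, contradicting the previous equality together with Proposition~\ref{pro:diam}(3). In item~\eqref{it:basic6}, iterating item~\eqref{it:basic3} yields the cycle $V_{x_0}(a,b)=V^*_{x_1}(a,b)=V_{x_2}(a,b)=V^*_{x_0}(a,b)$, again contradicting the disjointness of a diamond from its opposite. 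The main obstacle throughout is item~\eqref{it:basic2}: although mechanical, the combinatorial bookkeeping of the twelve oriented triples and four oriented quadruples encoded in Definition~\ref{def:pos} is bulky, and the cleanest path is to first erect the dictionary above and then reduce each clause to a nesting already produced by Lemma~\ref{lem:semigroup}.
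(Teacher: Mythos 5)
Your items (5) and (6), and the reductions of item (3) to item (2) and of item (4) to items (2)--(3), are reasonable in outline, but the foundation of your proposal --- items (1) and (2) --- has a genuine gap, and it is exactly at the point where the paper's proof brings in machinery you never use. In item (1) you assert that the diamonds $V_b(a,c)$ and $V_a(c,b)$ ``exist by Proposition~\ref{pro:diam}(3) thanks to the pairwise transversality of $a,b,c$''. Proposition~\ref{pro:diam}(3) says the opposite implication: points of a diamond are transverse to points of its opposite. Transversality of $b$ to $a$ and $c$ does \emph{not} place $b$ in one of the two distinguished diamonds with extremities $(a,c)$: in general $\Omega_a\cap\Omega_c$ has many connected components and only two of them are diamonds (this is why the paper speaks of \emph{preferred} components). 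If your inference were valid, every pairwise transverse triple would be positive, which is false. The fact that the single hypothesis $c\in V_c(a,b)$ forces the two reciprocal memberships of $a$ and $b$ is precisely the nontrivial content of item (1) (compare Lemma~\ref{lem:triple}), and the paper obtains it by deforming $c$ inside $V_c(a,b)$ to a point of a positive circle through $a$ and $b$ (Lemma~\ref{lem:path}, Proposition~\ref{pro:conf-circ}) and propagating positivity back along a path of pairwise transverse triples, using that diamonds are connected components of the transversality locus (Lemma~\ref{lem:open-trip}). An algebraic repair along your lines is possible --- writing $c=s_b(n)\cdot a=s_a(m)\cdot b$ with $n,m\in\ms N$ gives $a=s_b(n^{-1})\cdot c$ and $b=s_a(m^{-1})\cdot c$, which lie in the diamonds $s_b(\ms N^{-1})\cdot c$ and $s_a(\ms N^{-1})\cdot c$ opposite to $s_b(\ms N)\cdot c$ and $s_a(\ms N)\cdot c$ --- but your text does not do this, and the justification you give does not work.

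The same issue is fatal to item (2) as written. Definition~\ref{def:pos} for $(a,c,d,b)$ requires, besides the nestings you list, ``outer-point'' memberships: for instance $a$ and $b$ must lie in a \emph{common} diamond with extremities $(c,d)$, and $a$ and $c$ in a common diamond with extremities $(b,d)$ (these are the conditions~\eqref{eq:quad00}--\eqref{eq:quad000} of Lemma~\ref{lem:subtrip}). The nesting Lemma~\ref{lem:semigroup} only produces diamonds \emph{inside} a given diamond; it can never certify that an exterior point such as $a$ lies in a diamond over the interior pair $(c,d)$, so the claim that ``each clause of Definition~\ref{def:pos} is then matched to a nesting already produced'' cannot be carried out with the tools you cite. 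This is exactly where the paper invokes the connectedness/deformation mechanism (Lemma~\ref{lem:path} together with Lemma~\ref{lem:open-quad}), moving $(c,d)$ onto a positive circle through $a$ and $b$, where positivity is known, and pulling it back; without that (or an equivalent algebraic substitute) your bookkeeping does not close. The unjustified step reappears in item (4), where you declare $V_{x_1}(y,x_i)$ ``well-defined since $x_1$ is transverse to both $y$ and $x_i$'' --- again transversality gives no membership in a diamond, and transversality of $y$ to $x_i$ for $i\geq 2$ is itself not yet established at that stage. By contrast, your dictionary, your item (5), and your item (6) (disjointness of a diamond and its opposite) are correct and essentially the paper's argument.
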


Finally we have,

\begin{coro}[\sc Necklace property] \label{coro:necklace}
  Let $(a,b,c)$ be a positive triple. Let $\alpha$, $\beta$, and $\gamma$ be
  elements of $V_a (b,c)$, $V_b(a,c)$, and $V_c(a,b)$ respectively. Then the triple
  $(\alpha,\beta,\gamma)$ is positive.
\end{coro}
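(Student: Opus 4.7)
The approach is to realize $(\alpha,\beta,\gamma)$ as a sub-configuration of a positive $6$-configuration that interleaves $\alpha,\beta,\gamma$ with $a,b,c$. Concretely, I would aim to show that the cyclic $6$-tuple $(a,\beta,b,\gamma,c,\alpha)$ is a positive $6$-configuration; once this is established, the sub-triple $(\beta,\gamma,\alpha)$ --- equivalent to $(\alpha,\beta,\gamma)$ up to cyclic permutation --- is positive by the remark following Definition~\ref{def:pos} that sub-configurations of positive configurations are positive, and cyclic invariance for triples (Proposition~\ref{pro:basic}) concludes.

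The key steps would be, starting from the positive triple $(a,b,c)$, to insert $\beta$, $\gamma$, and then $\alpha$ one at a time using item~(\ref{it:basic4}) of Proposition~\ref{pro:basic}, producing successively positive $4$-, $5$-, and $6$-configurations. Item~(\ref{it:basic4}) inserts a point $y$ between $x_0$ and $x_1$ provided that $y\in V^{x_2}(x_0,x_1)$; the corresponding insertion conditions for the three steps are $\beta\in V^c(a,b)$, $\gamma\in V^a(b,c)$ and $\alpha\in V^b(c,a)$. After the three insertions, one extracts the sub-triple $(\alpha,\beta,\gamma)$ at positions $\{6,2,4\}$ of the resulting cyclic $6$-configuration.

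The main obstacle is that $\alpha,\beta,\gamma$ are given in the containing diamonds $V_a(b,c)$, $V_b(a,c)$ and $V_c(a,b)$ rather than in the opposite diamonds needed by item~(\ref{it:basic4}). The nesting Lemma~\ref{lem:semigroup} gives the one-sided inclusions $V^c(a,b)\subset V_b(a,c)$, $V^a(b,c)\subset V_c(a,b)$ and $V^b(c,a)\subset V_a(b,c)$, so the required opposite diamonds do sit inside the containing ones, but a priori only as proper sub-regions. The decisive technical point would be to establish the decomposition $V_a(b,c)=V^c(a,b)\cup\{a\}\cup V^b(c,a)$ and its cyclic analogues: on a positive circle this is nothing but the partition of an arc by an interior point (cf.\ Proposition~\ref{pro:pref-circle}), and in the general $\Theta$-positive setting it should follow from the parameterization of $\mathsf{N}$ (Fact~\ref{fact:param-posit-semi}) together with the $\mathsf{L}^\circ$-equivariance of $\mathsf{C}$. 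A case analysis on where $\alpha,\beta,\gamma$ fall in this decomposition then lets the iterative insertion go through.

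An alternative route that avoids the decomposition is a connectedness argument: the product $V_a(b,c)\times V_b(a,c)\times V_c(a,b)$ is connected (each diamond is the continuous image of the connected cone $\mathsf{C}$) and contains the base point $(a,b,c)$ at which positivity holds. Positivity of a triple is an open condition, so the subset on which it is satisfied is open; the missing ingredient is its closedness within this product, which should be controllable via the semigroup coordinates $s_b(n)\cdot a$, $s_b(n^{-1})\cdot c$ of Fact~\ref{fact:param-posit-semi}. This is the hardest step in either approach and is where I would expect the technical work to concentrate.
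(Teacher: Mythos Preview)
Your overall strategy---insert $\alpha,\beta,\gamma$ one at a time via item~(\ref{it:basic4}) of Proposition~\ref{pro:basic} to obtain a positive $6$-configuration, then extract the sub-triple---is exactly the paper's approach: the paper builds $(a,\gamma,b,\alpha,c,\beta)$ by three applications of item~(\ref{it:basic4}) and concludes.

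The obstacle you identified is genuine, but its source is a typo in the statement of the corollary rather than a missing argument. The hypotheses should read $\alpha\in V^a(b,c)$, $\beta\in V^b(a,c)$, $\gamma\in V^c(a,b)$ (superscripts, i.e.\ the \emph{opposite} diamonds), not the containing diamonds $V_a,V_b,V_c$. Both places where the corollary is invoked (Lemma~\ref{lem:UnifBd} and Proposition~\ref{pro:ZD2}) use it in this superscript form. With the corrected hypotheses, item~(\ref{it:basic4}) applies directly: starting from $(a,b,c)$, insert $\gamma\in V^c(a,b)$; cyclically shift and insert $\alpha\in V^a(b,c)$; shift again and insert $\beta\in V^b(a,c)=V^\gamma(a,c)$ (the last equality because $\gamma\in V^c(a,b)\subset V_b(a,c)$ by the inclusions in the proof of Lemma~\ref{lem:triple}). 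No case analysis or deformation is needed.

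Your proposed workarounds cannot succeed for the statement as printed, because that statement is actually false. The three diamonds $V_a(b,c)$, $V_b(a,c)$, $V_c(a,b)$ pairwise overlap: for instance $V^c(a,b)\subset V_a(b,c)\cap V_b(a,c)$ (again by the inclusions $V_{0,1}\subset V_{0,2}$ and $V_{0,1}\subset V_{2,1}$ from Lemma~\ref{lem:triple}). Choosing $\alpha=\beta$ in this intersection already violates the conclusion. In particular your decomposition $V_a(b,c)=V^c(a,b)\cup\{a\}\cup V^b(c,a)$ fails whenever $\dim\gp>1$ (removing a point does not disconnect an open set), and the connectedness argument cannot produce closedness since positivity genuinely fails on part of the product $V_a(b,c)\times V_b(a,c)\times V_c(a,b)$.
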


The proof of this proposition and of Corollary~\ref{coro:necklace} 
will be given in Section~\ref{sec:basic}.
It is important to remark that all these properties are true for
configurations in~$\rp$.  As an immediate consequence of
Proposition~\ref{pro:basic}, we also have that the intersections of diamonds arising from positive configurations of points are
diamonds:
\begin{coro}[\sc Intersection of diamonds]
  \label{coro:inter-diam}
  Let \((a,b,c,d)\) be a positive quadruple.  Then the following equality
  holds
  \[ V^{*}_{a}(b,c) = V^{*}_{d}(a,c) \cap V^{*}_{a}(b,d)\ .\]
\end{coro}
\begin{proof}
  The inclusion \(V^{*}_{a}(b,c) \subset V^{*}_{d}(a,c) \cap V^{*}_{a}(b,d)\)
  is a direct consequence of the nesting properties of diamonds
  (point~(\ref{item3:lem:semigroup}) of Lemma~\ref{lem:semigroup}).

  Let us prove the reverse inclusion.  Let~\(x\) be in \(V^{*}_{d}(a,c) \cap
  V^{*}_{a}(b,d)\).  This means that the quadruples \((a,b,x,d)\) and
  \((a,x,c,d)\) are positive.  From the first one we get that \(b\)~belongs to
  \(V_{d}^{*}(a,x)\) and from the second one we get that \(V_{d}^{*}(a,x) =
  V_{c}^{*}(a,x)\).  Point~(\ref{it:basic4}) of Proposition~\ref{pro:basic}
  applied with \(y=b\) and \((x_0,x_1, x_2)=(a,x,c)\) gives that \((a,b,x,c)\)
  is positive and hence that \(x\)~belongs to \(V^{*}_{a}(b,c)\).
\end{proof}

\subsubsection{Triples and quadruples}\label{sec:triples-quadruples}
Before addressing the proof of Proposition~\ref{pro:basic}, we establish a
number of preliminary statements. 
\begin{lemma}\label{lem:triple}
  A triple $(a_0,a_1,a_2)$ is positive if and only if $a_0$, $a_1$, $a_2$ 
  belong to diamonds with extremities $(a_1,a_2)$, $(a_2,a_0)$ and
  $(a_0,a_1)$ respectively.
\end{lemma}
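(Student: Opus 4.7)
The lemma is essentially a direct specialization of Definition~\ref{def:pos} to the case $p=3$, once one notices that the nesting condition (3) of that definition is vacuous on triples. The plan is simply to verify the two directions by unpacking the definition.

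For the ``only if'' direction, I would proceed as follows. Suppose $(a_0,a_1,a_2)$ is positive and let $\{V_{i,j}\}_{i\neq j}$ be the diamonds witnessing positivity. The three cyclically oriented triples among the indices are $(0,1,2)$, $(1,2,0)$ and $(2,0,1)$, and applying property~(\ref{item:2:def:pos}) of Definition~\ref{def:pos} to each of them in turn yields $a_1 \in V_{0,2}$, $a_2 \in V_{1,0}$ and $a_0 \in V_{2,1}$. In particular each $a_i$ lies in a diamond whose extremities are the two other points, which is exactly the statement of the lemma.

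For the ``if'' direction, I would build the diamonds required by Definition~\ref{def:pos} explicitly from the hypothesis. Given that for each $i\in\{0,1,2\}$ the point $a_i$ lies in some diamond with extremities $a_j$ and $a_k$ (where $\{i,j,k\}=\{0,1,2\}$), set $V_{j,k} \defeq V_{a_i}(a_j,a_k)$ whenever $(j,i,k)$ is cyclically oriented, and define $V_{k,j} \defeq V_{j,k}^{*}$ for the opposite ordering. Then property~(1) of Definition~\ref{def:pos} holds by construction, property~(\ref{item:2:def:pos}) is precisely the hypothesis, and property~(3) is vacuous because a triple of indices contains no cyclically oriented subquadruple. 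The transversality of pairs $(a_i,a_j)$ needed to speak of a diamond with extremities $a_i,a_j$ is already built into the hypothesis, since by Proposition~\ref{pro:diam}(1) a diamond lies in $\Omega_{a_i}\cap\Omega_{a_j}$ and so in particular forces $a_i$ and $a_j$ to be transverse.

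I do not anticipate any serious obstacle here: the entire content is that the definition of a positive configuration simplifies on triples to the condition that each point is contained in a diamond with extremities the other two. The only thing to keep track of is the bookkeeping of indices and orientations, which is why I would spell out the three oriented triples one by one rather than quote the definition in the abstract.
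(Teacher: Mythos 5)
Your construction is the same as the paper's: you take $V_{j,k}=V_{a_i}(a_j,a_k)$ for $(j,i,k)$ oriented and $V_{k,j}=V_{j,k}^*$, which is exactly the family $V_{i,i+2}=V_{a_{i+1}}(a_i,a_{i+2})$, $V_{i,i+1}=V^{a_{i+2}}(a_i,a_{i+1})$ used in the paper, and conditions (1) and (\ref{item:2:def:pos}) of Definition~\ref{def:pos} indeed hold by construction. The point where you diverge is your claim that condition (3) is vacuous for $p=3$. That claim rests on reading ``$(k,i,j,m)$ oriented'' as requiring four pairwise distinct indices; but the paper manifestly does not use that strict reading: in the proof of Lemma~\ref{lem:subtrip} the inclusions $V_{i,i+1}\subset V_{i,i+2}$ and $V_{i,i+1}\subset V_{i-1,i+1}$ (instances of (3) with a repeated index) are extracted from the positivity of the subtriples, and analogous repeated-index inclusions such as $V_{i,i+1}\subset V_{i,i+3}$ are verified there as part of quadruple positivity. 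So for the authors, positivity of a triple carries these inclusion constraints, and this is precisely the one substantive step of the paper's proof of Lemma~\ref{lem:triple} --- the sentence invoking Lemma~\ref{lem:semigroup} --- which your write-up omits.

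The gap is easily filled, and you should fill it rather than appeal to vacuity: apply the nesting Lemma~\ref{lem:semigroup} to $a_{i+1}\in V_{a_{i+1}}(a_i,a_{i+2})$ to get a unique diamond with extremities $(a_i,a_{i+1})$ contained in $V_{a_{i+1}}(a_i,a_{i+2})$; since $a_{i+2}$ is an extremity, it is not transverse to itself and hence does not lie in $V_{a_{i+1}}(a_i,a_{i+2})$ (Proposition~\ref{pro:diam}(1)), so this nested diamond cannot be $V_{a_{i+2}}(a_i,a_{i+1})$ and must be $V^{a_{i+2}}(a_i,a_{i+1})=V_{i,i+1}$. This gives $V_{i,i+1}\subset V_{i,i+2}$, and the symmetric application gives $V_{i,i+1}\subset V_{i-1,i+1}$, which are all the inclusions condition (3) can demand of a triple. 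With that one additional step your argument coincides with the paper's proof; your treatment of the ``only if'' direction and of transversality is fine.
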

\begin{proof} We just need to prove the ``if'' part. Let, for $i=0,1,2$
  (indices are taken modulo~$3$)
  \begin{equation*}
V_{i,i+1}\defeq V^*_{a_{i+2}}(a_i,a_{i+1})\ , \ V_{i,i+2}\defeq V_{a_{i+1}}(a_{i},a_{i+2}). 
\end{equation*}
Observe that 
\begin{equation*}
  V_{i,i+1}  =V^*_{i+1,i}\ .
\end{equation*}
Then Lemma~\ref{lem:semigroup}.(\ref{item3:lem:semigroup}) provides all the necessary inclusions needed to
prove that the triple is positive.
\end{proof}

The following lemma gives a way to go from positive triples to positive quadruples.

\begin{lemma}\label{lem:subtrip}
  Let $(a_0,a_1,a_2,a_3)$ be a quadruple. Assume that all subtriples are
  positive.  Then the quadruple  $(a_0,a_1,a_2,a_3)$ is positive, if and only
  if, for all~$i$ (indices are taken modulo~$4$)
  \begin{align}
    a_i&\in V^*_{a_{i+2}}(a_{i+1},a_{i+3})\ , \label{eq:quad00} \\
    a_{i+2}&\in V_{a_{i+1}}(a_{i},a_{i+3})\ . \label{eq:quad000} 
  \end{align}
\end{lemma}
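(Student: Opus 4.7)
The \emph{only if} direction is an immediate unpacking of Definition~\ref{def:pos}. Given diamonds $V_{i,j}$ witnessing positivity of $(a_0, a_1, a_2, a_3)$, condition~(2) applied to the oriented triples $(i, i+1, i+3)$ and $(i, i+2, i+3)$ puts both $a_{i+1}$ and $a_{i+2}$ inside the diamond $V_{i, i+3}$; hence $V_{i, i+3} = V_{a_{i+1}}(a_i, a_{i+3})$ contains $a_{i+2}$, which is exactly~(\ref{eq:quad000}). Condition~(2) applied to the oriented triple $(i+3, i, i+1)$ puts $a_i$ in $V_{i+3, i+1}$; combined with condition~(1) and with~(2) applied to $(i+1, i+2, i+3)$, which identifies $V_{i+1, i+3}$ with $V_{a_{i+2}}(a_{i+1}, a_{i+3})$, this yields~(\ref{eq:quad00}).

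For the \emph{if} direction, I construct explicit candidates for the six unordered pairs of diamonds. Triple positivity provides the two diagonals $V_{0,2} \defeq V_{a_1}(a_0, a_2)$ and $V_{1,3} \defeq V_{a_2}(a_1, a_3)$, with opposites $V_{2,0} \defeq V^*_{0,2}$ and $V_{3,1} \defeq V^*_{1,3}$. For each $i \in \{0,1,2,3\}$, hypothesis~(\ref{eq:quad000}) asserts that $V_{a_{i+1}}(a_i, a_{i+3})$ and $V_{a_{i+2}}(a_i, a_{i+3})$ coincide---they share the interior point $a_{i+2}$---so I take this common diamond as $V_{i, i+3}$, with opposite $V_{i+3, i}$. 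Condition~(1) of Definition~\ref{def:pos} holds by construction, and condition~(2) reduces to twelve vertex-in-diamond statements; each follows either from the definition of the diagonals, from the definition of the $V_{i, i+3}$'s, from the redundancy supplied by~(\ref{eq:quad000}), or from~(\ref{eq:quad00}) (which identifies $a_i$ inside the opposite diagonal via Proposition~\ref{pro:diam}(3)).

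The main obstacle is condition~(3), the four diamond inclusions $V_{i+1, i+2} \subset V_{i, i+3}$. My plan is to iterate the Nesting Lemma~\ref{lem:semigroup} twice inside $V_{i, i+3}$, which by construction contains both $a_{i+1}$ and $a_{i+2}$. First, nesting at $a_{i+1}$ produces a unique sub-diamond $V(a_{i+1}, a_{i+3}) \subset V_{i, i+3}$; this sub-diamond is one of the two diamonds with extremities $a_{i+1}, a_{i+3}$, and I identify it with $V_{i+1, i+3}$ by ruling out the opposite: by~(\ref{eq:quad00}), $V^*_{i+1, i+3}$ contains $a_i$, which as an extremity of $V_{i, i+3}$ does not lie in $V_{i, i+3}$, so $V^*_{i+1, i+3} \not\subset V_{i, i+3}$. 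A second application of nesting at $a_{i+2} \in V_{i+1, i+3}$ yields $V(a_{i+1}, a_{i+2}) \subset V_{i+1, i+3} \subset V_{i, i+3}$, and the same extremity obstruction (now for $a_i$ or $a_{i+3}$, each of which sits in the opposite diamond $V^*_{i+1, i+2}$ by construction of the long diamonds) forces $V(a_{i+1}, a_{i+2}) = V_{i+1, i+2}$. The delicate step throughout is the bookkeeping of which opposite diamond to rule out at each identification; once these are justified, the chain of inclusions closes and the quadruple is positive.
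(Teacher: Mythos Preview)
Your argument is correct and follows essentially the same route as the paper: both define the same candidate diamonds $V_{i,j}$ (using~(\ref{eq:quad000}) to make the two descriptions of $V_{i,i+3}$ agree and~(\ref{eq:quad00}) to pin down the diagonals), and both obtain the inclusions via the nesting Lemma~\ref{lem:semigroup}. Your ``extremity obstruction'' is exactly the uniqueness clause of that lemma made explicit. One point to watch: the paper reads condition~(3) of Definition~\ref{def:pos} as covering also the inclusions with only three distinct indices (for instance $V_{i,i+1}\subset V_{i,i+2}$ and $V_{i,i+2}\subset V_{i,i+3}$), and its proof checks all of them by running through every subtriple; your two-step chain recovers some of these as intermediate steps, and the remaining ones follow by the very same nesting-plus-identification argument applied to the other subtriples, so nothing new is needed.
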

\begin{proof} 
The ``only if'' part follows from the definition. It remains to prove the
``if'' part.
Let 
\begin{align*}
  V_{i,i+1}&\defeq V^*_{a_{i+2}}(a_i,a_{i+1}) = V^*_{a_{i+3}}(a_i,a_{i+1})\ ,\\
  V_{i,i+2}&\defeq V_{a_{i+1}}(a_{i},a_{i+2})=
             V^*_{a_{i+3}}(a_{i},a_{i+2})\ ,\\
  V_{i,i+3}&\defeq V_{a_{i+1}}(a_i,a_{i+3})=
             V_{a_{i+2}}(a_{i},a_{i+3})\ ,		
\end{align*}
where in the second line 
we used the hypothesis~\eqref{eq:quad00}, while in the first and last lines 
we used the hypothesis~\eqref{eq:quad000} and the fact that if  $d$ belongs to $V_a(b,c)$ then $V_d(b,c)=V_a(b,c)$. Hence by definition
\begin{equation*}
V_{i,i+1}=V^*_{i+1,i}\ , V_{i,i+2}=V^*_{i+2,i}\ . \label{eq:subtrip0}	
\end{equation*}
It thus follows that  for all $i$ and $j$,
\begin{equation}
V_{i,j}=V^*_{j,i}\ .\label{eq:subtrip2}	
\end{equation}

From the positivity of the subtriple $(a_i,a_{i+1},a_{i+2})$, we get the inclusions
\begin{equation}
V_{i,i+1}\subset  V_{i,i+2}\ , \ \ V_{i+1,i+2}\subset  V_{i,i+2}\ .\label{eq:subtrip1}	
\end{equation}

From the positivity of the triple $(a_{i},a_{i+1},a_{i+3})$ we get the inclusions
\begin{align}
V_{i,i+1}=V^*_{a_{i+3}}(a_i,a_{i+1})&\subset V_{a_{i+1}}(a_i,a_{i+3})=V_{i,i+3}\ ,\label{eq:subtrip3}	\\
V_{i+1,i+3}=V^*_{a_i}(a_{i+1},a_{i+3})&\subset V_{a_{i+1}}(a_{i},a_{i+3})=V_{i,i+3}\ .\label{eq:subtrip4}
\end{align}
Similarly the positivity of the triple $(a_{i},a_{i+2},a_{i+3})$ yields
\begin{align}
V_{i+2,i+3}=V^*_{a_{i}}(a_{i+2},a_{i+3})&\subset V_{a_{i+2}}(a_i,a_{i+3})=V_{i,i+3}\ ,\label{eq:subtrip5}	\\
V_{i,i+2}=V^*_{a_{i+3}}(a_{i},a_{i+2})&\subset	V_{a_{i+2}}(a_{i},a_{i+3})=V_{i,i+3}\ .\label{eq:subtrip6}
\end{align}
All together the equation~\eqref{eq:subtrip2} as well as the inclusions	\eqref{eq:subtrip1}, \eqref{eq:subtrip3}, \eqref{eq:subtrip4}, \eqref{eq:subtrip5}, and~\eqref{eq:subtrip6} prove that $(a_0,a_1,a_2,a_3)$ is   a positive quadruple. \end{proof}

\subsubsection{Deformation lemmas}

We need to prove some deformation lemmas.

\begin{lemma}[\sc Deforming triples]\label{lem:open-trip}
  Let $a(t)$, $b(t)$, and $c(t)$ be continuous arcs from $[0,1]$ to $\gp$ such
  that
  \begin{enumerate}
  \item for all $t$ in $[0,1]$, 
    $a(t)$, $b(t)$, and $c(t)$
    are pairwise transverse, 
  \item the triple  $(a(0),b(0),c(0))$ is  positive.
  \end{enumerate}
  Then, for all $t$,  $(a(t),b(t),c(t))$ is a positive triple.
\end{lemma}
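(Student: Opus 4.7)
The plan is to argue by connectedness of $[0,1]$. I introduce $T\defeq\{t\in[0,1] : (a(t),b(t),c(t)) \text{ is positive}\}$; by hypothesis $0\in T$, and I will show $T$ is both open and closed in $[0,1]$. The guiding principle is the characterization furnished by Proposition~\ref{pro:diam}: for pairwise transverse $(a,b,c)$, positivity of the triple is equivalent to $c$ lying in one of the two diamonds with extremities $a,b$, and each such diamond is a connected component of the open set $\Omega_a\cap\Omega_b$, parameterized via a U-pinning as $s_a(\ms N)\cdotp b$ (the opposite diamond being $s_a(\ms N^{-1})\cdotp b$).

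For openness, I will fix $t_0\in T$ and choose a U-pinning $s_0$ at $a(t_0)$ such that $c(t_0)=s_0(n_0)\cdotp b(t_0)$ for some $n_0\in\ms N$. Using a local continuous section of $\G\to\gp$ near $a(t_0)$, I extend $s_0$ to a continuous family of U-pinnings $s_t$ at $a(t)$. Transversality of $a(t)$ and $b(t)$ makes $\ms U_{a(t)}$ act simply transitively on $\Omega_{a(t)}$, so there is a unique $n(t)\in\ms U$ with $c(t)=s_t(n(t))\cdotp b(t)$, depending continuously on $t$. Since $n(t_0)=n_0\in\ms N$ and $\ms N$ is open in $\ms U$, the element $n(t)$ remains in $\ms N$ for $t$ in a neighborhood of $t_0$, yielding openness.

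For closedness, I will take a sequence $t_k\to t_\infty$ with $t_k\in T$ and apply the same construction near $t_\infty$. Writing $c(t_k)=s_{t_k}(n_k)\cdotp b(t_k)$, positivity at $t_k$ forces $n_k\in\ms N\cup\ms N^{-1}$; after passing to a subsequence I may assume $n_k\in\ms N$ for all $k$, so by continuity $n_k\to n_\infty\in\overline{\ms N}$ and $c(t_\infty)=s_{t_\infty}(n_\infty)\cdotp b(t_\infty)$. Setting $D\defeq s_{t_\infty}(\ms N)\cdotp b(t_\infty)$, one has $c(t_\infty)\in\overline{D}$, and by hypothesis $c(t_\infty)$ lies in $\Omega_{a(t_\infty)}\cap\Omega_{b(t_\infty)}$. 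Proposition~\ref{pro:diam} identifies $D$ as a connected component of this open set, so $\overline{D}\cap(\Omega_{a(t_\infty)}\cap\Omega_{b(t_\infty)})=D$, giving $c(t_\infty)\in D$ and $t_\infty\in T$.

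I expect the closedness step to be the main obstacle: a priori the limit $n_\infty$ lies only in $\overline{\ms N}$ and could conceivably belong to $\partial\ms N$. What rules this out is not an intrinsic algebraic feature of the semigroup, but rather the transversality hypothesis at $t_\infty$ combined with the structural fact that a diamond is a genuine connected component of an open set and so has boundary disjoint from that open set.
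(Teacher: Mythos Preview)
Your open-closed strategy is exactly the paper's, and your explicit handling of the closedness step (using that a diamond $D$ is a connected component of the open set $\Omega_a\cap\Omega_b$, so $\overline{D}\cap(\Omega_a\cap\Omega_b)=D$) is correct and is precisely the content of the paper's phrase ``by continuity.''  The paper's proof is just a terser version of yours: it does not build continuous U-pinnings but simply invokes that diamonds are connected components of $\Omega_{a(t)}\cap\Omega_{b(t)}$, so membership in a diamond is locally constant along a path of pairwise transverse triples.

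There is, however, a circularity in your write-up.  You invoke as ``guiding principle'' the equivalence \emph{positivity of $(a,b,c)$ $\Leftrightarrow$ $c$ lies in a diamond with extremities $a,b$}.  The forward implication is indeed immediate from Definition~\ref{def:pos}, but the reverse implication is exactly item~(\ref{it:basic1}) of Proposition~\ref{pro:basic}, whose proof \emph{uses} the present lemma.  So at this point in the logical development you are not entitled to conclude positivity of $(a(t),b(t),c(t))$ from the single fact that $c(t)$ lies in a diamond.

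The fix is painless and is what the paper does: run your argument three times, once for each vertex.  At $t=0$, positivity gives that $c(0)$ lies in a diamond with extremities $a(0),b(0)$, that $a(0)$ lies in a diamond with extremities $b(0),c(0)$, and that $b(0)$ lies in a diamond with extremities $a(0),c(0)$.  Your open-closed machinery then propagates all three conditions along $[0,1]$.  Lemma~\ref{lem:triple} (which \emph{is} already available) then gives positivity of $(a(t),b(t),c(t))$ for every~$t$.
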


\begin{proof}
  The hypothesis tells us that
  \begin{equation*}
    c(t)\in\Omega_{a(t)}\cap\Omega_{b(t)}\ , \ \ 
    a(t)\in\Omega_{c(t)}\cap\Omega_{b(t)}\ , \ \ 
    b(t)\in\Omega_{a(t)}\cap\Omega_{c(t)}\ .
  \end{equation*}
  By hypothesis, 
  there are diamonds $V(a(0),b(0))$, $V(c(0),b(0))$, and $V(c(0),a(0))$ such
  that
  \begin{equation*}
    c(0) \in V(a(0),b(0))\ ,\ \ 
    a(0) \in V(c(0),b(0))\ ,\ \ 
    b(0) \in V(c(0),a(0))\ .
  \end{equation*}
We can extend these to continuous maps $t\mapsto V(a(t),b(t))$,
$t\mapsto V(c(t),b(t))$, and $t\mapsto V(c(t),a(t))$ in the space of diamonds (one always has
that $V(e,d)$ is a diamond with extremities~$e$ and~$d$).
We now use the fact that a diamond with extremities~$e$ and~$d$  is a
connected component of $\Omega_e\cap\Omega_d$
(Proposition~\ref{pro:diam}). Then  by continuity, 
for all $t$
\begin{equation*}
	c(t)\in V(a(t),b(t))\ ,\ \ 
	a(t)	\in V(c(t),b(t))\ ,\ \ 
	b(t)	\in V(c(t),a(t))\ .
\end{equation*}
Thus the result follows from Lemma~\ref{lem:triple}. \end{proof}

Similarly
\begin{lemma}[\sc Deforming quadruples]\label{lem:open-quad}
  Let $\gamma$ and $\eta$ be continuous arcs from $[0,1]$ to~$\gp$ such that
  there exist~$a$ and~$b$ in~$\gp$ satisfying
  \begin{enumerate}
  \item for all $t$ in $[0,1]$, 
    $a,\gamma(t),b,
      \eta(t)$ are pairwise transverse,
  \item the quadruple $(a,\gamma(0),b,\eta(0))$ is positive.
  \end{enumerate}
  Then, for all $t$, $(a,\gamma(t),b,\eta(t))$ is a positive quadruple.
\end{lemma}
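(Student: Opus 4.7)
The strategy is to combine the triples deformation Lemma~\ref{lem:open-trip} with the characterization of positive quadruples furnished by Lemma~\ref{lem:subtrip}. First I would apply Lemma~\ref{lem:open-trip} to each of the four subtriples $(a,\gamma(t),b)$, $(\gamma(t),b,\eta(t))$, $(b,\eta(t),a)$ and $(\eta(t),a,\gamma(t))$: the pairwise transversality hypothesis provides the transversality required by that lemma, and at $t=0$ each subtriple is positive as a subconfiguration of the positive quadruple $(a,\gamma(0),b,\eta(0))$. Hence every subtriple is positive for every $t\in[0,1]$.

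By Lemma~\ref{lem:subtrip} it then suffices to check, for every $t$, each instance of conditions~(\ref{eq:quad00}) and~(\ref{eq:quad000}) for the quadruple $(a,\gamma(t),b,\eta(t))$. Each such instance asserts that a specific point $p(t)$ lies in a specific diamond $V_{q(t)}(x(t),y(t))$ with extremities $x(t),y(t)$ and label point $q(t)$, all four functions being continuous with values in $\{a,b,\gamma(t),\eta(t)\}$; the concrete label choice at $t=0$ is dictated by the positivity of the initial quadruple. By Proposition~\ref{pro:diam}, $V_{q(t)}(x(t),y(t))$ is one of the two connected components of the open set $\Omega_{x(t)}\cap\Omega_{y(t)}$, and the pairwise transversality hypothesis guarantees that both $p(t)$ and $q(t)$ belong to $\Omega_{x(t)}\cap\Omega_{y(t)}$ for every $t$.

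The continuity argument is then exactly as in the triples case, but carried out in a family. Consider the set
\begin{equation*}
S=\{t\in[0,1]\mid p(t)\in V_{q(t)}(x(t),y(t))\}.
\end{equation*}
Using a local continuous section of U-pinnings at $x(t)$, the moving diamond $V_{q(t)}(x(t),y(t))$ admits a continuous parametrization as an open subset of $\gp$ near any given $t_0$, so the condition defining $S$ is locally open; the complementary condition $p(t)\in V_{q(t)}^{*}(x(t),y(t))$ is open for the same reason, so $S$ is clopen in $[0,1]$. Since $0\in S$ by hypothesis, $S=[0,1]$, and so every instance of~(\ref{eq:quad00}) and~(\ref{eq:quad000}) is satisfied for all $t$, finishing the proof via Lemma~\ref{lem:subtrip}.

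The main subtlety, and the only genuine novelty compared to the triples lemma, is that some of the diamonds in play have both extremities moving with $t$ (for example $V(\gamma(t),\eta(t))$), so one cannot track a fixed connected component of a fixed open set. This is handled by noting that a U-pinning and hence the semigroup realization $s_{x(t)}(\mathsf N)\cdot y(t)$ of the diamond varies continuously with its extremities, which is what allows the clopen argument above to go through uniformly.
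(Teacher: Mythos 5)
Your overall route is exactly the paper's: positivity of all subtriples for every $t$ via Lemma~\ref{lem:open-trip}, reduction to the membership conditions of Lemma~\ref{lem:subtrip}, and then a continuity argument for membership in diamonds whose extremities move. The one place where your write-up does not hold together as stated is the closedness half of the clopen argument. You declare that the \emph{complementary} condition to $p(t)\in V_{q(t)}(x(t),y(t))$ is $p(t)\in V^{*}_{q(t)}(x(t),y(t))$ and deduce that $S$ is clopen from the openness of both conditions. But the complement of a diamond inside $\Omega_{x}\cap\Omega_{y}$ is \emph{not} the opposite diamond in general: the diamonds are two \emph{preferred} connected components of the set of points transverse to both extremities, and this set can have further components. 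So a priori $p(t)$ could leave $V_{q(t)}$ without entering $V^{*}_{q(t)}$, and openness of $S$ and of $S^{*}$ alone does not make $S$ closed.

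The gap is repairable in two ways, both consistent with what you already have. First, your own first step gives you the missing dichotomy: since every subtriple of $(a,\gamma(t),b,\eta(t))$ is positive for every $t$, the point $p(t)$ in each instance of conditions~(\ref{eq:quad00}) and~(\ref{eq:quad000}) does lie in \emph{some} diamond with the relevant extremities, and since there are exactly two diamonds with given extremities (namely $V_{q(t)}$ and $V^{*}_{q(t)}=V_{q(t)}^{*}$, the point $q(t)$ lying in a diamond again by subtriple positivity), one gets $[0,1]=S\sqcup S^{*}$ with both sets open, whence $S=[0,1]$. Alternatively, and closer to the paper's phrasing, argue closedness of $S$ directly: a diamond is a connected component of $\Omega_{x}\cap\Omega_{y}$ by Proposition~\ref{pro:diam}, hence closed in it; transporting the moving configuration back to a fixed transverse pair by a continuous family of group elements (your continuous family of U-pinnings does this), a continuous path that remains transverse to both extremities and starts in the diamond cannot leave the corresponding connected component. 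This closedness-of-components fact is precisely what the paper's ``follows by continuity'' invokes; with either repair your proof is correct and coincides with the paper's argument.
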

\begin{proof}
  By applying Lemma~\ref{lem:open-trip}, we obtain that all the subtriples of $(a,\gamma(t),b, \eta(t))$ are positive.
  By Lemma~\ref{lem:subtrip}, we only need to check that 
  \begin{align*}
    a\in V_{b}^{*}(\gamma(t),\eta(t))\
    & ,\  b\in V_{a}^{*}(\gamma(t),\eta(t))\ ,\\
    \gamma(t)\in V^{*}_{\eta(t)}(a,b)\
    & ,\  \eta(t)\in V^{*}_{\gamma(t)}(a,b)\ ,\\
    \gamma(t)\in V_{a}(\eta(t),b)\
    & ,\ b\in V_{\gamma(t)}(a,\eta(t))\ ,\\
    \eta(t)\in V_b(\gamma(t),b)\
    & , \ a\in V_{\eta(t)}(b,\gamma(t))\ .
\end{align*}
Using again the fact that a diamond with extremities~$c$ and~$d$ is a connected component of $\Omega_c\cap\Omega_d$  (Proposition~\ref{pro:diam}),  
the  statement follows by continuity.
\end{proof}

Finally we also have as an immediate consequence of the connectedness of the positive cone:
\begin{lemma}[\sc Connectedness]\label{lem:path}
Let $a$ and $b$ be two transverse points.
\begin{enumerate}[leftmargin=*]
\item Assume $c$ is so that $(a,c,b)$ is positive.  Then there is
  $(\mathsf{H}, C)$ in~$\mathcal{H}$ such that $a$ and~$b$ belong to~$C$, and there is a path $t\mapsto c(t)$ from
  $[0,1]$ to $V_c(a,b)$ connecting $c=c(0)$ to $c(1)$ so that $(a,c(1),b)$ is
  a positive triple on $C$.
\item Assume furthermore that $d$ belongs to $V^*_a(c,b)$ then there are $(\mathsf{H}, C)$ in~$\mathcal{H}$, a
  path $t\mapsto c(t)$ as in the previous item, and a path $t\mapsto d(t)$ from
  $[0,1]$ to $V_c(a,b)$, so that $d(t)\in V^*_a(c(t),b)$ and $(a,c(1),d(1),b)$
  are on $C$.
\end{enumerate}
\end{lemma}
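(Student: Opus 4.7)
The strategy is to reduce both items to the path-connectedness of the positive semigroup $\ms N$. By Fact~\ref{fact:param-posit-semi}, $\ms N$ is diffeomorphic via $\Psi$ to the $\ms L^\circ$-invariant cone $\ms C = C_1\times\cdots\times C_N$, each factor being an open cone inside an irreducible $\ms L$-module; such cones are path-connected, hence so are $\ms N$ and $\ms N\times\ms N$.

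For item~(1), I fix a U-pinning $s_a$ at $a$ so that $V_c(a,b)=s_a(\ms N)\cdot b$. The diamond, being the continuous image of $\ms N$ under the orbit map $n\mapsto s_a(n)\cdot b$, is path-connected. The two arcs of $C\smallsetminus\{a,b\}$ sit inside $\Omega_a\cap\Omega_b$, which by Proposition~\ref{pro:diam}(1) is the disjoint union of the two diamonds with extremities $a,b$; exactly one arc $\gamma$ therefore lies inside $V_c(a,b)$. Picking any $c_1\in\gamma$, any continuous path in $V_c(a,b)$ from $c$ to $c_1$ does the job, and the triple $(a,c_1,b)$ is positive by Proposition~\ref{pro:conf-circ}.

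For item~(2), I first choose $c_1,d_1\in\gamma$ so that $(a,c_1,d_1,b)$ is cyclically ordered on $C$, which is possible since $\gamma$ is an oriented arc from $a$ to $b$; by Proposition~\ref{pro:conf-circ} the quadruple is positive, so $d_1\in V^a(c_1,b)$. To construct the simultaneous paths I parameterize positive quadruples via the semigroup. By Proposition~\ref{pro:diam}(2), fix a pinning $s_b$ at $b$ with $V_c(a,b)=s_b(\ms N)\cdot a$, so $c=s_b(n)\cdot a$ and $c_1=s_b(n_1)\cdot a$ for some $n,n_1\in\ms N$. For any $c'=s_b(\tilde n)\cdot a\in V_c(a,b)$, the element $s_b(\tilde n)$ fixes $b$, sends $a$ to $c'$, and by the semigroup property carries $V_c(a,b)=s_b(\ms N)\cdot a$ into $s_b(\tilde n\,\ms N)\cdot a\subset V_c(a,b)$; this image is a diamond with extremities $c',b$ that misses $a$ (since $a\notin V_c(a,b)$), so it must equal $V^a(c',b)$. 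Consequently $d=s_b(nm)\cdot a$ and $d_1=s_b(n_1 m_1)\cdot a$ for some $m,m_1\in\ms N$. Now pick any path $(n(t),m(t))$ in the path-connected space $\ms N\times\ms N$ from $(n,m)$ to $(n_1,m_1)$ and set $c(t)=s_b(n(t))\cdot a$, $d(t)=s_b(n(t)m(t))\cdot a$; by construction $c(t)\in V_c(a,b)$ and $d(t)\in V^a(c(t),b)$ for all $t$, as required.

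The main obstacle is justifying the identification $V^a(c',b)=s_b(\tilde n\,\ms N)\cdot a$: it hinges on the fact that $s_b(\tilde n)$ preserves $V_c(a,b)$ by the semigroup closure of $\ms N$ and moves $a$ off itself, which together with the description of $\Omega_a\cap\Omega_b$ as the disjoint union of exactly two diamonds (Proposition~\ref{pro:diam}(1)) forces the image to be the one not containing $a$. Once this algebraic step is granted, the remainder of the argument reduces to the path-connectedness of $\ms N$ invoked at the outset.
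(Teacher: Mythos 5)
Your overall strategy is the paper's: both items are reduced, via the semigroup parameterization of diamonds, to path-connectedness of the positive semigroup. Your item~(2) --- parameterizing the pairs $(c',d')$ with $d'\in V^a(c',b)$ by $\ms N\times\ms N$ through $(n',m')\mapsto\bigl(s_b(n')\cdot a,\,s_b(n'm')\cdot a\bigr)$ and running a single path in $\ms N\times\ms N$ --- is a clean one-step variant of the paper's two-stage construction (the paper first transports $d$ along with $c$ by a fixed semigroup element, then moves $d$ alone inside $V^a(c(1),b)$ using item~(1)). Your identification $V^a(c',b)=s_b(\tilde n\,\ms N)\cdot a$, obtained by translating the diamond $V_c(a,b)$ by $s_b(\tilde n)$ and invoking that there are exactly two diamonds with given extremities, one of which contains $a$, is sound.

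One justification in item~(1), however, is wrong as stated: $\Omega_a\cap\Omega_b$ is \emph{not} the disjoint union of the two diamonds with extremities $a$ and $b$, and Proposition~\ref{pro:diam}(1) does not assert this --- it only says that each diamond is \emph{a} connected component of $\Omega_a\cap\Omega_b$. In general (already for the full flag variety of a higher-rank split group) the set of points transverse to both $a$ and $b$ has further components besides the two preferred ones. So knowing that an arc of $C\smallsetminus\{a,b\}$ sits in $\Omega_a\cap\Omega_b$ does not by itself place it in a diamond, let alone in $V_c(a,b)$. The fact you need is supplied by Proposition~\ref{pro:pref-circle}(3): for $e\in C\smallsetminus\{a,b\}$ the intersection $V_e(a,b)\cap C$ is an arc with extremities $a$ and $b$, so each of the two arcs of $C\smallsetminus\{a,b\}$ lies in a diamond with extremities $a,b$; and since a cyclically ordered quadruple on $C$ is positive (Proposition~\ref{pro:conf-circ} combined with item~(3) of Proposition~\ref{pro:basic}), points on the two different arcs lie in opposite diamonds, so exactly one arc lies in $V_c(a,b)$. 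With that substitution in place of the appeal to Proposition~\ref{pro:diam}(1), your argument for both items goes through.
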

\begin{proof} Using a U-pinning at~$b$, we identify $\ms N$ with a
  positive semigroup $\ms N_b$ in $\ms U_b$ such that we have $V_c(a,b)=\ms
  N_b\cdot a$. The first point just follows from the connectedness of the
  positive semigroup~$\ms N_b$. For use in the second point we take a path $t
  \mapsto c(t)$ which is constant for~\mbox{$t>1/2$}.

Recall that $d=m_0\cdot c$, with $m_0\in\ms N_b$.
Let us define,  for $t\in [0,1/2]$,
$$d(t)=m_0\cdot c(t)\ ,$$
then we have by the
 semigroup property $d(t)\in V^*_{a}(c(t),b)$. Observe also that  $d(0)=d$. Then
 for $t\in [1/2,1]$, we have $c(t)=c(1/2)$, and we choose, using that $C$~contains
 elements of $V^*_a(c(1/2),d)$ (\emph{cf.}\ Proposition~\ref{pro:pref-circle}.(\ref{item3:pro:pref-circle})), a path $t\mapsto d(t)$ with  $d(t)\in 
V^*_a(c(1/2),d)$, and such that $d(1)$  belongs to $C$. \end{proof}

\subsubsection{Proof of the combinatorial properties}\label{sec:basic}

\begin{proof}[Proof of item~\eqref{it:basic1} of Proposition~\ref{pro:basic}]
  Assume $(a,b,c)$ satisfies the hypothesis. Let $(\mathsf{H}, C)$
  in~$\mathcal{H}$ and
  $t\mapsto c(t)$   obtained in Lemma~\ref{lem:path}. On~$C$,   a triple is positive if and only if the three points are pairwise distinct,
  the result thus follows from Lemma~\ref{lem:open-trip}.
\end{proof}

\begin{proof}[Proof of item~\eqref{it:basic3} of Proposition~\ref{pro:basic}]
  The ``only if'' part follows from the definition. Then for the ``if'' part
  we find, by  Lemma~\ref{lem:path}, $(\mathsf{H}, C)$ in~$\mathcal{H}$ and
  paths $t\mapsto x(t)$ and $t\mapsto y(t)$
  in $V_{x_0}(a,b)$ and  $V_{y_0}(a,b)$ respectively, such that
  $(x(0),y(0))=(x_0,y_0)$ and  $x(1)$, $y(1)$ are on~$C$, the $\ms H$-circle passing
  through~$a$ and~$b$. Then $(a,x(1),b,y(1))$ is positive and so is $(a,x_0,b,y_0)$ by Lemma~\ref{lem:open-quad}, since $x(t)$ and $y(t)$ are transverse thanks to Proposition~\ref{prop:opp-diamonds}.
\end{proof}

\begin{proof}[Proof of item~\eqref{it:basic2} of Proposition~\ref{pro:basic}]
  From the connectedness Lemma~\ref{lem:path} we obtain $(\mathsf{H},C)$
  in~$\mathcal{H}$ and paths $t\mapsto c(t)$,
  $t\mapsto d(t)$ such that $a,c(t),d(t),b$ are pairwise transverse, $c(0)=c$,
  $d(0)=d$, $(a,c(1),d(1),b)$ on~$C$ and $d(1)\in V_{a}(c(1),b)$. In particular $(a,c(1),d(1),b)$ is positive and thus by the deformation Lemma~\ref{lem:open-quad}, $(a,c,d,b)$ is positive.
\end{proof}

\begin{proof}[Proof of item~\eqref{it:basic4} of Proposition~\ref{pro:basic}]
  This is an immediate consequence of item~\eqref{it:basic2} and the fact that
  in order to check the positivity of a configuration one only needs to check
  the positivity of subtriples and subquadruples.
\end{proof}

\begin{proof}[Proof of item~\eqref{it:basic5} of Proposition~\ref{pro:basic}]
  If $(a,b,c,d)$ is positive, we have the strict inclusion $V_a(b,d)\subset
  V_a(c,d)$ and if $(a,c,b,d)$ is positive, 
we have the strict inclusion $V_a(c,d)\subset V_a(b,d)$. Hence a contradiction.
\end{proof}

\begin{proof}[Proof of item~\eqref{it:basic6} of Proposition~\ref{pro:basic}]
  Assume that $(a,x_0,b,x_1)$ is positive. Then $V= V_{x_0}(a,b)$ and $
  V_{x_1}(a,b)$ are  opposite diamonds. If both $(a,x_1,b,x_2)$ and $(a,x_0,b,x_2)$ are positive then we get that $x_2$ belongs to both $V$ and $V^*$, which is a contradiction.
\end{proof}

\begin{proof}[Proof of the necklace property (Corollary~\ref{coro:necklace})]  
Let us first remark that from item~\eqref{it:basic4} of
Proposition~\ref{pro:basic}, applied three times, (and using cyclic invariance
of positivity) the configuration
$$
(a,\gamma, b,\alpha, c,\beta)\ ,
$$
is positive. Thus $(\gamma,\alpha,\beta)$ is positive. \end{proof}

\subsection{Inclusion of diamonds}
 
\begin{proposition}[\sc Boundedness property]\label{pro:bounded}   Let $(a,b,d)$ be a positive triple and let $c\in V_b(a,d)$.  Assume that there
exist sequences  $\seq{b}$ and $\seq{c}$, converging respectively to $b$ and
$c$ and such that, for all~$m$, $(a,b_m,c_m,d)$ is  a positive
quadruple. Then   the sequence  $(\overline{V}{}^{*}_d(b_m,c_m))$
converges in the Hausdorff topology and  
\begin{equation*}
 \lim_{m\rightarrow\infty}\left(\overline{V}{}^{*}_d(b_m,c_m)\right)\ \subset  V_c(a,d)\ .
\end{equation*}
\end{proposition}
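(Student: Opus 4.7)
The plan is to establish the stronger uniform statement that $\overline{V^d(b_m,c_m)}$ is contained in a single compact subset of $V_c(a,d)$ for all $m$ sufficiently large; the desired set-theoretic limit inclusion will then follow immediately.

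First I would verify that each $V^d(b_m,c_m)$ lies in $V_c(a,d)$ for $m$ large. By the definition of positive quadruple, $(a,b_m,c_m,d)$ supplies a distinguished diamond with extremities $b_m,c_m$ contained in the distinguished diamond with extremities $a,d$. Since $d$ is an extremity of the latter and hence not inside it, the inner diamond must equal $V^d(b_m,c_m)$. Because $b_m\to b\in V_b(a,d)=V_c(a,d)$ and this diamond is open by Proposition~\ref{pro:diam}, we have $V_{b_m}(a,d)=V_c(a,d)$ for $m$ large, whence $V^d(b_m,c_m)\subset V_c(a,d)$.

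Next I would sandwich the moving diamonds between a fixed pair. Using Lemma~\ref{lem:semigroup} applied to the positive triples $(a,b,d)$ and $(a,c,d)$ (the latter following from item~\eqref{it:basic1} of Proposition~\ref{pro:basic} since $c\in V_b(a,d)$), I select points $b'\in V(a,b)$ and $c'\in V(c,d)$, where the nested diamonds $V(a,b),V(c,d)\subset V_c(a,d)$ come from the nesting lemma. By the openness of positivity (Lemma~\ref{lem:open-quad}) together with the convergence $b_m\to b$ and $c_m\to c$, the configuration $(a,b',b_m,c_m,c',d)$ becomes positive for $m$ large. The nesting property of positive configurations (item~3 of Definition~\ref{def:pos}) then yields $V^d(b_m,c_m)\subset V^d(b',c')$.

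The main obstacle is to show that $\overline{V^d(b',c')}$ is a compact subset of the open set $V_c(a,d)$. I would work in the U-pinning $s_a$ at $a$, parameterizing $V_c(a,d)=s_a(\mathsf{N})\cdot d$ and writing $b'=s_a(h_1)\cdot d$, $c'=s_a(h_2)\cdot d$ with $h_1,h_2\in\mathsf{N}$; positivity of $(a,b',c',d)$ supplies an element $n\in\mathsf{N}$ relating $h_1$ and $h_2$ by $\mathsf{N}$-multiplication. The diamond $V^d(b',c')$ is then parameterized by an open \emph{interval} in $\mathsf{N}$ whose closure corresponds to a closed interval in $\overline{\mathsf{N}}$. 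By the cone parameterization $\Psi\colon\mathsf{C}\to\mathsf{N}$ of Fact~\ref{fact:param-posit-semi} together with the saliency of $\overline{\mathsf{C}}$, this closed interval is compact in $\mathsf{U}$; since $h_1$ is an interior point of $\mathsf{N}$, left-multiplication by $h_1$ sends the interval into $\mathsf{N}$ (the interior of a salient cone absorbs its closure under the semigroup operation). Hence $\overline{V^d(b',c')}$ is a compact subset of $V_c(a,d)$, and combining the steps yields $\overline{V^d(b_m,c_m)}\subset\overline{V^d(b',c')}\subset V_c(a,d)$ for all $m$ large, proving $\lim_m\overline{V^d(b_m,c_m)}\subset V_c(a,d)$.
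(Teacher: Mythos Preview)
Your overall strategy---sandwich the moving diamonds inside a fixed diamond $V^d(b',c')$ and then show that the closure of the latter sits inside $V_c(a,d)$---is exactly the skeleton of the paper's proof. The first two steps are fine and essentially match what the paper does (the paper chooses the sandwich endpoints $e,f$ on a positive circle through $a$ and $d$, but otherwise the argument is the same use of the openness of positivity).

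The gap is in your third step, where you claim that $\overline{V^d(b',c')}$ is compact in $V_c(a,d)$. Working in the U-pinning $s_a$ at $a$ identifies $\Omega_a$ with $\mathsf{U}_a$, and $V_c(a,d)$ with $\mathsf{N}_a$. But the diamond $V^d(b',c')$ has extremities $b'$ and $c'$, neither of which is $a$, so it is parameterized by a positive semigroup in $\mathsf{U}_{b'}$ or $\mathsf{U}_{c'}$, not in $\mathsf{U}_a$. There is no natural ``interval in $\mathsf{N}_a$'' describing it, and your assertion that such an interval exists and that its closure is compact in $\mathsf{U}$ is exactly the content of the proposition rather than a consequence of the cone structure. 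The saliency of $\overline{\mathsf{C}}$ gives compactness of \emph{additive} order-intervals $\overline{\mathsf{C}}\cap(y-\overline{\mathsf{C}})$ in the vector space $(\mathfrak{u}_\Theta)^N$; the diffeomorphism $\Psi$ does not intertwine this additive order with the multiplicative semigroup structure of $\mathsf{N}$, so saliency does not yield boundedness of $V^d(b',c')$ in $\Omega_a$. Even if you had a bounded preimage in $\mathsf{U}_a$, you would also need to rule out that the closure in $\gp$ meets $S_a=\gp\smallsetminus\Omega_a$; a priori the closure in the chart and the closure in the compact flag variety differ.

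This is precisely the obstacle the paper confronts. Its Proposition~\ref{pro:Bd1} (``for $e$ on the positive circle arc $\gamma$, $\overline{V^a(e,d)}\subset\Omega_a$'') is the missing ingredient, and its proof is a genuine dynamical argument: one introduces the one-parameter hyperbolic subgroup $\delta$ of the positive $\mathsf{PSL}_2(\mathbb{R})$ fixing $a$ and $d$, shows that any putative intersection $J(e)=\overline{V^a(e,d)}\cap S_a$ is $\delta^+$-invariant and therefore meets the fixed-point set $F$, and then shows this forced subset of $F$ is contained in $S_c\cup S_d$ for a Zariski-dense family of $c$, which is impossible unless it is empty. Your U-pinning/saliency argument does not supply this input; you need either to reproduce the dynamical step or to invoke Proposition~\ref{pro:Bd1} (equivalently, Corollary~\ref{coro:inclus}) directly.
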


In particular,
\begin{coro}[\sc Inclusion]\label{coro:inclus}\label{coro:Bd1}
Let $(a,b,d,e)$ be a positive quadruple in~$\gp$. Then
\begin{equation*}
\overline{V}{}^{*}_e(b,d)\subset V_b(a,e)\ .
\end{equation*}
  
Let $(a,b,c,d,e)$ be a positive quintuple of points in~$\gp$.
then 
\begin{equation*}
\overline{V}{}_c(b,d)\subset V_c(a,e).
\end{equation*}
\end{coro}

Proposition~\ref{pro:bounded} will be proved in
Section~\ref{sec:proof-bound-prop}. Corollary~\ref{coro:Bd1} will be proved in
Section~\ref{sec:prel-circl} as the consequence of an intermediate statement.

\subsubsection{Preliminaries on circles}
\label{sec:prel-circl}

Let 
$V(a,d)$ be a diamond with extremities~$a$ and~$b$, and let $(\mathsf{H},C)$
be in~$\mathcal{H}$ such that $a$
and~$d$ belong to~$C$ and $C\cap V(a,d)\neq \emptyset$.

\begin{itemize}[leftmargin=*]
\item Let $
\delta=\{\delta{_t}\mid t\in\mathbb R\}$, 
be  the $1$-parameter group in $\ms H$ for which $d$~is the attracting fixed
point and $a$~is the repelling fixed point for the element $\delta_t$
($t>0$). The corresponding basins of attraction/repulsion are denoted by
\begin{align*}
O^+&\defeq \{x\in\gp \mid \lim_{t\to\infty}\delta_t(x)=d\}\ ,\\
O^-&\defeq \{x\in\gp \mid \lim_{t\to-\infty}\delta_t(x)=a\}\ .
\end{align*}
\item Let $\gamma=C\smallsetminus\{a,d\}$.
\item  Let $F$ be the set of fixed points of~$\delta$ in~$\gp$. 
 \end{itemize}
The result of this section is 
\begin{proposition}
For any $e$ in $\gamma$, we have $\overline{V}{}^{*}_a(e,d)\subset  \Omega_a$.  \label{pro:Bd1}
\end{proposition}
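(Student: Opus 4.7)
My plan is to use the dynamics of the one-parameter flow $\delta=\{\delta_t\}_{t\in\mathbb{R}}\subset\ms H$ together with the salience of the semigroup $\overline{\mathsf N}$. First I would show $V^a(e,d)\subset\Omega_a$: since $(a,e,d)$ is a positive triple on $C$ by Proposition~\ref{pro:conf-circ}, the point $e$ belongs to $V_e(a,d)$, and the nesting Lemma~\ref{lem:semigroup} produces a unique diamond with extremities $e,d$ inside $V_e(a,d)$. As $a\notin V_e(a,d)$, this sub-diamond cannot contain $a$ and so coincides with $V^a(e,d)$; hence $V^a(e,d)\subset V_e(a,d)\subset\Omega_a$.

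Next I would exploit the $\delta$-flow to contract the closure into itself. For $t\geq 0$ the point $\delta_t e$ lies on the sub-arc of $C$ from $e$ to $d$ not passing through $a$, which is exactly $V^a(e,d)\cap C$. Applying the nesting lemma inside $V^a(e,d)$, and using that $\delta_t$ fixes both $a$ and $d$, we get $\delta_t V^a(e,d)=V^a(\delta_t e,d)\subset V^a(e,d)$, and therefore $\delta_t\overline{V^a(e,d)}\subset\overline{V^a(e,d)}$ for every $t\geq 0$.

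The conclusion would be by contradiction: suppose there is $x\in\overline{V^a(e,d)}\cap S_a$. Since $S_a$ is closed and $\delta$-invariant (as $\delta$ fixes $a$), the forward orbit $\{\delta_t x\}_{t\geq 0}$ remains in $\overline{V^a(e,d)}\cap S_a$. Any accumulation point of this orbit therefore lies in the nested intersection $\bigcap_{t\geq 0}\overline{V^a(\delta_t e,d)}$, which I claim reduces to $\{d\}$. Granting the claim, $\delta_t x\to d$, which forces $d\in S_a$ and contradicts the transversality of $a$ and $d$.

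The hard part, and the main obstacle, is the intersection claim $\bigcap_{t\geq 0}\overline{V^a(\delta_t e,d)}=\{d\}$. I would approach it via the semigroup parametrization, writing $V^a(\delta_t e,d)=s_d(\mathsf N\,m_t n_e)\cdot a$ where $m_t n_e\in\mathsf N$ is determined by $\delta_t e=s_d(m_t n_e)\cdot a$. The key technical input is the inclusion $\overline{\mathsf N}\cdot\mathsf N\subset\mathsf N$, which follows from $\mathsf N$ being the interior of $\overline{\mathsf N}$ (itself a consequence of the open cone parametrization of Fact~\ref{fact:param-posit-semi}); this rules out accumulation points in $\Omega_d$ other than those lying in $V_e(a,d)\subset\Omega_a$. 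The salience condition of Definition~\ref{defi:semigroup} applied to $n_e$ and its putative inverse further excludes $a$ from $\overline{V^a(e,d)}$, while the scaling action of $\delta_t$ on $\mathfrak u_\Theta$ inherited from the Jacobson--Morozov $\mathfrak{sl}_2$-triple defining $\ms H$ concentrates all remaining accumulation points (coming from sequences that exit every compact of $\mathsf U_d$) at the attracting fixed point $d$.
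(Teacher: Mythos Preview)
Your first two steps are correct and coincide with the paper's opening moves: the nesting $V^a(e,d)\subset V_e(a,d)\subset\Omega_a$ and the forward invariance $\delta_t\,\overline{V^a(e,d)}\subset\overline{V^a(e,d)}$ for $t\geq 0$. The gap is exactly where you locate it, and it is fatal to this line of argument.

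The intersection $\bigcap_{t\geq 0}\overline{V^a(\delta_t e,d)}$ is $\delta$-invariant and therefore contains every $\delta$-fixed point lying in $\overline{V^a(e,d)}$. In rank at least two the flow~$\delta$ has fixed points in~$\gp$ other than~$a$ and~$d$; these lie in~$S_d$, hence outside the affine chart $\Omega_d\simeq\ms U_d$, and your semigroup parametrisation $s_d(\ms N\,n_t)\cdot a$ sees none of them. The assertion that the scaling action of~$\delta_t$ on~$\mathfrak u_\Theta$ ``concentrates all remaining accumulation points at~$d$'' is the crux and is false: sequences leaving every compact of~$\ms U_d$ can accumulate at many different points of~$S_d$, depending on direction. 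More decisively, $S_a$~is closed and $\delta$-invariant while $d\in\Omega_a$; hence for any hypothetical $x\in\overline{V^a(e,d)}\cap S_a$ the forward orbit $\{\delta_t x\}_{t\geq 0}$ stays in~$S_a$ and \emph{cannot} converge to~$d$. It accumulates instead on some $\delta$-fixed point in $S_a\cap\overline{V^a(e,d)}$, and ruling out such a fixed point is precisely the proposition you are trying to prove. So the claim $\bigcap_{t\geq 0}\overline{V^a(\delta_t e,d)}=\{d\}$ is not a step towards the result; it is equivalent to it.

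The paper proceeds differently after the same reduction to the fixed-point set $F(e)\defeq\overline{V^a(e,d)}\cap S_a\cap F$. Rather than pure dynamics, it uses a transversality argument: by nesting $V^a(e,d)$ inside diamonds $V^a(c,d)$ as~$c$ ranges over an open set of points (produced from positive quadruples $(a,b,c,d)$ with $c\in O^-$), one obtains that every point of~$F(e)$ lies in $S_c\cup S_d$ for all such~$c$. A short Zariski-density argument (no point of~$\gp$ can be non-transverse to every member of a nonempty open family) then forces $F(e)=\emptyset$.
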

This proposition implies Corollary~\ref{coro:Bd1}:

\begin{proof}[Proof of the Corollary~\ref{coro:Bd1}]
	Applying Proposition~\ref{pro:Bd1}, we get that 
        \begin{equation*}
	\overline{V}{}^{*}_a(b,d)\subset \overline{V}{}^{*}_a(b,e)\cap \overline{V}{}^{*}_e(d,a)\subset \Omega_e\cap\Omega_a\ . 
      \end{equation*}
	Since $V_d({a,e})$ is a connected component of $\Omega_e\cap\Omega_a$ containing $V^*_a(b,d)$ it follows that 
        \begin{equation*}
\overline{V}{}^{*}_a(b,d)\subset V_{d}(a,e)\ .
\end{equation*}

This proves the first part of the corollary.

Suppose now that  $(a,b,c,d,e)$ is positive. Then the equalities
$V_b(a,e)=V_c(a,e)$ and $V_c(b,d) = V_{e}^{*}(b,d)$ together with the first
part imply the inclusion $\overline{V}{}^{*}_e(b,d)\subset
V_c(a,e)$.
\end{proof}
Recall that $S_a$ is complementary to $\Omega_a$. In order to prove Proposition~\ref{pro:Bd1},
we introduce the following sets for~$e$ in~$\gamma$
\begin{align*}
  J(e)&{\defeq}\overline{V}{}^{*}_a(e,d)\cap S_a\ , \\
  F(e)&{\defeq}J(e)\cap F\ .
\end{align*}
We will first prove that the sets  $J(e)$ and $F(e)$ are empty.
We first prove the following lemma

\begin{lemma}\label{lem:Bd2}
    Let $\gamma_1$ be one of the two connected components of~$\gamma$.
For any $e$ in $\gamma_1$,
\begin{enumerate}
\item $J(e)$ is invariant by the semigroup 
$
\delta^+{\defeq}\{\delta_t\mid t>0\}
$, {\it i.e.}\ $\delta_t( J(e))\subset J(e)$ for all~$t>0$,
\item \label{it:Bd2-3} $F(e)$ is independent of the choice of $e$ in~$\gamma_1$,
\item if $J(e)$ is not empty, so is $F(e)$,\label{it:Bd2-1}
\item \label{it:Bd2-2}     for all $c$ and $b$  in $\gp$ such that $(a,c,d,b)$ is a positive quadruple and 
$$
V_{a}^{*}(c,d)\cap\gamma_1\neq \emptyset \ ,
$$
 then
\begin{equation*}
F(e)\subset S_c\cup S_b\ .
\end{equation*}
 
\end{enumerate}
\end{lemma}

\begin{proof}
We prove the first point. By Proposition~\ref{pro:conf-circ} for  $t>0$
\begin{equation*}
V^*_a(\delta_t(e),d)\subset V^*_a(e,d)\ .
\end{equation*}
This implies that $V^*_a(e,d)$ is invariant by $\delta^+$ and so is $J(e)$.

The second point is 
 a consequence that $F$~is pointwise fixed by~$\delta_t$:
\begin{equation*}
  J(\delta_t(e))\cap F=\delta_t(J(e))\cap F=J(e)\cap F\ .
\end{equation*}

The third point is a consequence of the hyperbolic nature of the subgroup~$\delta$:
indeed, using a linear representation of~$\G$ we can
assume that $\delta$~is a one-parameter subgroup of diagonal matrices and that
$J(e)$~is a non-empty closed  subset of the projective space invariant by the
semigroup~$\delta^+$; in this case, every ray $( \delta_t ( x))_{t\geq 0}$
(for~$x$ in $J(e)$)
has a limit that is a fixed point of~$\delta$.

Let us prove now the last point.
Let $(a,c,d,b)$ be as in the hypothesis.  Thus 
$V_{a}^{*}(c,d)\cap \gamma_1\not=\emptyset$ and by point~(\ref{it:Bd2-3}) we can
choose~$e$ in this intersection. By
Proposition~\ref{pro:basic}.(\ref{it:basic4}), $(a,c,e,d,b)$ is a positive
configuration and hence $V^{*}_{a}(e,d) \subset V^{*}_{a}(c,b)$. From this we get 
\begin{equation}
F(e)\subset J(e)=\left(\overline{V}{}^{*}_a(e,d)\cap S_a\right)\subset\left( \overline{V}{}^{*}_a(c,b)\cap S_a\right)\ . \label{it:Bd1-1}
\end{equation}
Now, we remark that since $(a,c,d,b)$ is positive, we have  $$
V^{*}_{a}(c,b)\subset\Omega_a.$$
Thus 
\begin{equation}
\overline{V}{}_a^{*}(c,b)\cap S_a \subset \overline{V}{}_a^{*}(c,b)\smallsetminus V_{a}^{*}(c,b).\label{it:Bd1-2}
\end{equation}
But since $V^{*}_{a}(c,b)$ is a connected component of  the open set $$\Omega_c\cap\Omega_b=\gp \smallsetminus \left(S_c\cup S_b\right)\ ,$$
we get 
\begin{equation}
\left(\overline{V}{}^{*}_a(c,b)\smallsetminus V^{*}_{a}(c,b)\right)\subset \left(S_c\cup S_b\right). \label{it:Bd1-3}
\end{equation}
Combining inclusions~\eqref{it:Bd1-1}, \eqref{it:Bd1-2}  and~\eqref{it:Bd1-3},
we get that
\begin{equation*}
F(e)\subset \left(S_c\cup S_b\right)\ .\qedhere
\end{equation*}
\end{proof}

We can now prove Proposition~\ref{pro:Bd1}, in other words that  $J(e)$~is empty. By item~\eqref{it:Bd2-1} of Lemma~\ref{lem:Bd2}, it suffices to show that $F(e)$~is empty. The fact that $F(e)$ is empty follows from item~\eqref{it:Bd2-2} of Lemma~\ref{lem:Bd2} and the following result.

\begin{lemma}
Let $Q$ be a  subset of~$\gp$. Assume that there exist nonempty open sets~$U$
and~$V$ such that for all $c$ in $U$, and for all $b$ in $V$,
\begin{equation*}
Q\subset S_c\cup S_b,
\end{equation*}
then $Q$ is empty.
\end{lemma}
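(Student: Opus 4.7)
My plan is to argue by contradiction: assume $Q$ is nonempty and fix any $q \in Q$. The key observation is that the transversality relation is symmetric, so the condition $q \in S_c$ is equivalent to $c \in S_q$. Hence the hypothesis $Q \subset S_c \cup S_d$ applied to the single point $q$ translates into: for every $(c,d) \in U \times V$, one has $c \in S_q$ or $d \in S_q$.

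Next I would recast this as a statement about the product. The condition above is exactly that
\begin{equation*}
(U \smallsetminus S_q) \times (V \smallsetminus S_q) = \emptyset,
\end{equation*}
which forces $U \subset S_q$ or $V \subset S_q$. At this point I would invoke the fact, recalled in the Lie algebra section of the paper, that $S_q$ is a proper algebraic subvariety of $\gp$. Since $\gp = \G/\ms P_\Theta$ is an irreducible algebraic variety, any proper algebraic subvariety has empty interior. Therefore $S_q$ cannot contain any nonempty open set.

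This contradicts the fact that both $U$ and $V$ are nonempty open subsets of $\gp$, so the assumption that $Q$ is nonempty must fail, and $Q = \emptyset$. I do not expect any real obstacle here; the only subtle point is the symmetry $q \in S_c \iff c \in S_q$, which follows directly from the definition of transversality, together with the basic fact that the non-transversality locus $S_q$ is a proper (closed) algebraic subvariety, already stated earlier in the paper.
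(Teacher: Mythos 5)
Your proof is correct and follows essentially the same route as the paper: fix $q\in Q$, use the symmetry of transversality to rewrite the hypothesis as a condition on $S_q$, and conclude from the fact that the proper closed subvariety $S_q$ cannot contain the nonempty open sets $U$ or $V$ (the paper simply picks $c\in U\smallsetminus S_q$ and $d\in V\smallsetminus S_q$ and contradicts $q\in S_c\cup S_d$). No gap here.
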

\begin{proof}  Let~$q$ be in~$Q$ and set $Z\defeq S_q$. Then $Z$~is a proper closed Zariski subset of~$\gp$. Observe that if $u\not\in Z$, then
\begin{equation*}
  q\notin S_u.
\end{equation*}
On the other hand we can find~$u$ in the nonempty set $U\smallsetminus Z$ and~$v$ in the nonempty set $V\smallsetminus Z$, and by hypothesis $q\in S_u\cup S_v$. This shows that $q\neq q$
and concludes the proof.
\end{proof}

\subsubsection{Proof of the boundedness Proposition~\ref{pro:bounded}.}
\label{sec:proof-bound-prop}

  We use the notation of the previous paragraph.

  First note that $\overline{V}{}^{*}_a(b_m,c_m) =
  \overline{V}{}^{*}_d(b_m,c_m) = \overline{V}{}^{*}_a(b_m,d) \cap
  \overline{V}{}^{*}_d(a,c_m)$ (cf.{} Corollary~\ref{coro:inter-diam}). Since the sequences $\seq{b}$ and $\seq{c}$
  converge, the sequences of closures of diamonds $(\overline{V}{}^{*}_a(b_m,d))
  $ and $(\overline{V}{}^{*}_d(a,c_m))$ also converge and thus the sequence
  $(\overline{V}{}^{*}_a(b_m,c_m))$ converges.  
  Let~$C$ be an $\ms H$-circle through~$a$
and~$d$ such that $\gamma\defeq V_c(a,d)\cap C$ is not empty. Since being positive is an open condition for quadruples, we can find $e$ and $f$ in $\gamma$ so that 
$(e,b_m,c_m,f)$ is positive for $m$ large enough as well as $(a,e,f,d)$. Thus
$$
V^{*}_{a}(b_m,c_m)\subset V^{*}_{a}(e,f)
$$
 Applying  Proposition~\ref{pro:Bd1}, we get that 
$$
\overline{V}{}^{*}_a(b_m,c_m)\subset \overline{V}{}^{*}_a(e,f)\subset V_c(a,d)\ ,
$$
which easily implies the result.

\subsection{Left and right limits of positive maps}
Our main  result is 
\begin{proposition}[\sc Existence of left and right limits]\label{pro:ext}
Let $S$ be a totally ordered set and $\phi$ be a positive map from~$S$ to~$\gp$. 

Let $\{b_n\}_{n\in\mathbb N}$ be a sequence 
in $S$ such that there exist $a$, $b$, and $c$ in~$S$ with $a<
b_n\leq  b_{n+1}\leq b<c$, for all~$n$. 
Then $\{\phi(b_n)\}_{n\in\mathbb N}$ converges to a point $y$ in $V_b(a,c)$.

Symmetrically, let $\{a_n\}_{n\in\mathbb N}$ be a sequence of points such that $c<a\leq a_{n+1}\leq  a_n<b$.  Then $\{\phi(a_n)\}_{n\in\mathbb N}$ converges to a point $y$ in  $V_a(c,b)$. 
\end{proposition}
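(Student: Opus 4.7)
The plan is to build a nested sequence of open diamonds $D_n$ containing the tail of $\{\phi(b_n)\}$, trap every subsequential limit inside $\bigcap_n D_n$ via Corollary~\ref{coro:inclus}, and then deduce uniqueness from the exclusion property of Proposition~\ref{pro:basic}.

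It suffices to treat the left-limit case; the right-limit case is symmetric under reversal of the cyclic order. One reduces to the situation where $\{b_n\}$ is strictly increasing and $b_n < b$ for all $n$, since otherwise the sequence is eventually constant with limit $\phi(b)\in V_b(a,c)$. Write $\alpha = \phi(a)$, $\beta = \phi(b)$, $\gamma = \phi(c)$ and $\beta_n = \phi(b_n)$; by positivity, $(\alpha, \beta_n, \beta_m, \beta, \gamma)$ is a positive $5$-configuration for $n<m$. Define
\begin{equation*}
D_n \defeq V_\beta(\beta_n,\gamma),
\end{equation*}
the diamond with extremities $\beta_n, \gamma$ containing $\beta$. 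From the positive quadruple $(\beta_n, \beta_m, \beta, \gamma)$ one reads that $\beta_m \in D_n$ for every $m > n$. To get the nesting $D_{n+1}\subset D_n$, apply item~(\ref{it:basic2}) of Proposition~\ref{pro:basic} to the positive triple $(\beta_n, \beta_{n+1}, \gamma)$: one first identifies $V^{\beta_n}(\beta_{n+1},\gamma) = D_{n+1}$ (by locating $\beta$ and $\beta_n$ in opposite diamonds with extremities $\beta_{n+1}, \gamma$ inside the positive quadruple $(\beta_n, \beta_{n+1}, \beta, \gamma)$), concludes that every $x\in D_{n+1}$ produces a positive quadruple $(\beta_n, \beta_{n+1}, x, \gamma)$, hence $D_{n+1}\subset \Omega_{\beta_n}\cap\Omega_\gamma$, and uses that $\beta\in D_n\cap D_{n+1}$ with $D_{n+1}$ connected to pass to the connected component $D_n$. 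The same argument, applied to $(\alpha,\beta_n,\beta,\gamma)$, gives $D_n\subset V_\beta(\alpha,\gamma) = V_b(a,c)$.

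To trap subsequential limits, apply Corollary~\ref{coro:inclus} to the positive quadruple $(\beta_n, \beta_m, \beta, \gamma)$:
\begin{equation*}
\overline{V^\gamma(\beta_m,\beta)} \subset V_{\beta_m}(\beta_n,\gamma) = D_n.
\end{equation*}
Since $\beta_l \in V^\gamma(\beta_m,\beta)$ for all $l>m$ (by positivity of $(\beta_m, \beta_l, \beta, \gamma)$), any subsequential limit $y$ of $\{\beta_l\}$ lies in $\overline{V^\gamma(\beta_m,\beta)} \subset D_n$ for every $n<m$, hence in $\bigcap_n D_n \subset V_b(a,c)$.

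The main obstacle is uniqueness. Assuming two distinct subsequential limits $y_1 \neq y_2$, one interleaves to produce indices $m_k < l_k < m_{k+1}$ with $\beta_{m_k}\to y_1$ and $\beta_{l_k}\to y_2$. The positive quadruples $(\alpha, \beta_{m_k}, \beta_{l_k}, \gamma)$ and $(\alpha, \beta_{l_k}, \beta_{m_{k+1}}, \gamma)$ would then converge to $(\alpha, y_1, y_2, \gamma)$ and $(\alpha, y_2, y_1, \gamma)$, both positive, contradicting item~(\ref{it:basic5}) of Proposition~\ref{pro:basic}. Transversality of each $y_i$ to $\alpha$, $\gamma$, and every $\beta_n$ is immediate from $y_i\in\bigcap_n D_n$; the subtle point, which I expect to be the main technical difficulty, is ensuring mutual transversality of $y_1$ and $y_2$ so that the limiting configurations are genuine positive quadruples to which the exclusion principle applies. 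This is where Proposition~\ref{pro:bounded} enters, applied with $(a,b,d) = (\alpha, y_1, \gamma)$ (a positive triple since $y_1\in V_\beta(\alpha,\gamma)$), $c = y_2 \in V_{y_1}(\alpha,\gamma) = V_b(a,c)$, and the sequences $b_k = \beta_{m_k}$, $c_k = \beta_{l_k}$: it constrains the limit of the diamonds $\overline{V^\gamma(\beta_{m_k},\beta_{l_k})}$ to sit inside $V_{y_2}(\alpha,\gamma)$, which forces the transversality and positive cyclic orientation of $(y_1, y_2)$ needed to conclude.
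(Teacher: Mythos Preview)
Your argument for boundedness and for the containment of every subsequential limit in $\bigcap_n D_n \subset V_b(a,c)$ is correct, and it is a genuinely different route from the paper's. The paper works directly with the positive semigroup: writing $\phi(b_n)=\pi_n\cdot\phi(b_0)$ with $\pi_n=m_n\cdots m_1\in\ms N_{\phi(c)}$, it uses Proposition~\ref{pro:bounded} to see that $\{\pi_n\}$ is bounded in $\ms U_{\phi(c)}$, and then derives uniqueness from the defining property of the semigroup: if two subsequential limits $u,v\in\overline{\ms N}_{\phi(c)}$ of $\{\pi_n\}$ satisfy $u=w_1 v$ and $v=w_0 u$ with $w_0,w_1\in\overline{\ms N}_{\phi(c)}$, then $w_0 w_1=1$ forces $w_0=w_1=1$.

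Your uniqueness argument, however, has a real gap at exactly the point you flag. To invoke exclusion (item~\eqref{it:basic5}) you need $(\alpha,y_1,y_2,\gamma)$ and $(\alpha,y_2,y_1,\gamma)$ to be genuine positive quadruples, hence $y_1$ and $y_2$ mutually transverse. Your application of Proposition~\ref{pro:bounded} does not deliver this: its conclusion is only that the limit of $\overline{V^\gamma(\beta_{m_k},\beta_{l_k})}$ lies inside the open diamond $V_{y_2}(\alpha,\gamma)=V_\beta(\alpha,\gamma)$, which you already knew from $y_1,y_2\in\bigcap_n D_n$. Two distinct points of an open diamond need not be transverse to one another (non-transversality is a positive-codimension, not an empty, condition inside $\Omega_\alpha\cap\Omega_\gamma$), so nothing here rules out $y_1\in S_{y_2}$. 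Without that transversality the limiting configurations are not objects to which Definition~\ref{def:pos} or Proposition~\ref{pro:basic} applies, and the exclusion step collapses. The paper's argument sidesteps this entirely by exploiting the one algebraic feature of Definition~\ref{defi:semigroup} that your diamond-geometric approach never touches: that the only invertible element of $\overline{\ms N}$ is the identity.
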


\begin{rema}
  In the case of Lusztig's total positivity, this statement was proved in
  \cite[Theorem 7.4]{Fock:2006a}, this is also proved for $\mathsf{SL}_n(\mathbb R)$ in \cite[Section 5]{Labourie:2006}.
\end{rema}

As an immediate  corollary, we show that positive maps defined on dense subsets extend to positive maps. More precisely:

\begin{coro}[\sc Extension of positive maps]\label{coro:posmap}
Let $A$ be dense subset in $[0,1]$. Assume that we have a positive map $\xi$ from $A$ to $\gp$. Then there exist
\begin{itemize}
\item 	a unique left-continuous positive map $\xi_+$  from $[0,1]$ to $\gp$
  such that $\xi$ coincide with $\xi_+$ on a dense subset of $A$, 
\item a unique right-continuous positive map $\xi_-$  from $[0,1]$ to $\gp$ such that $\xi$ coincide with $\xi_-$ on a dense subset of $A$.
\end{itemize}
Moreover,
\begin{itemize}
\item for any ordered quadruple $(x,y,z,t)$ of pairwise distinct points in $[0,1]$
$$
(\xi_\epsilon(x),\xi_\eta(y),\xi_\nu(z),\xi_\beta(t))\ ,
$$
is a positive quadruple for any choice of $\epsilon$, $\eta$, $\nu$, and $\beta$ in $\{+,-\}$,
\item if $\seq{x}$, $\seq{z}$ are sequences in $[0,1]$ converging to $y$, with
  for all~$n$, $x_m<y<z_m$, then
 $$
\lim_{m\to\infty}\xi_{\epsilon_m}(x_m)=\xi_+(y)\ , \ \lim_{m\to\infty}\xi_{\eta_m}(z_m)=\xi_-(y)\ ,
$$
for any sequences $\seq{\epsilon}$ and $\seq{\eta}$ in $\{+,-\}$.
\end{itemize}

\end{coro}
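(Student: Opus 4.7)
The plan is to apply Proposition~\ref{pro:ext} pointwise. For each $y\in[0,1]$ I take any strictly increasing sequence $b_n\nearrow y$ in $A$ together with $a\in A\cap(-\infty,b_1)$ and $b,c\in A$ with $y\leq b<c$ (available by density), and set $\xi_-(y)\defeq\lim_n\xi(b_n)$, which by Proposition~\ref{pro:ext} exists and lies in $V_{\xi(b)}(\xi(a),\xi(c))$. Symmetrically I define $\xi_+(y)$ using decreasing sequences from above. Independence from the choice of sequence is a standard interleaving: given $b_n,b_n'\nearrow y$ in $A$, I build a single monotone sequence $c_n\nearrow y$ in $A$ containing subsequences of both, apply Proposition~\ref{pro:ext} to $(c_n)$, and conclude by uniqueness of the limit that $\lim\xi(b_n)=\lim\xi(b_n')$.

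The heart of the argument is the mixed positivity of triples $(\xi_\alpha(x),\xi_\beta(y),\xi_\gamma(z))$ for $x<y<z$ in $[0,1]$ and arbitrary $\alpha,\beta,\gamma\in\{+,-\}$. I fix auxiliary points $p,q,r\in A$ with $x<p<y<q<r<z$ and approach each of $x,y,z$ from the relevant side by sequences in $A$, arranging that eventually $x_n<p<y_n<q<r<z_n$. Each $(\xi(x_n),\xi(p),\xi(y_n),\xi(q),\xi(z_n))$ is a positive $5$-configuration by hypothesis. By construction the three limit points lie in specific diamonds pinned by $\xi(p),\xi(q),\xi(r)$; using the inclusion Corollary~\ref{coro:inclus} together with the nesting Lemma~\ref{lem:semigroup} and the triple/quadruple criteria of Lemmas~\ref{lem:triple}--\ref{lem:subtrip}, I verify that $\xi_\alpha(x),\xi_\beta(y),\xi_\gamma(z)$ retain the correct nested configuration in the limit, establishing both the pairwise transversality and the diamond inclusions required by Definition~\ref{def:pos}.

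The last bullet of the corollary reduces to the first paragraph by a diagonal extraction: given an arbitrary approximating sequence $x_m\nearrow y$ with signs $a_m\in\{+,-\}$, each $\xi_{a_m}(x_m)$ is itself a limit of $\xi$-values along a sequence in $A$, so for each $m$ I can choose $t_m\in A$ with $t_m<y$, close to $x_m$, and with $\xi(t_m)$ close to $\xi_{a_m}(x_m)$ in any fixed metric on the compact manifold $\gp$. Then $t_m\nearrow y$ in $A$ and $\xi(t_m)\to\xi_-(y)$ by the first paragraph, whence $\xi_{a_m}(x_m)\to\xi_-(y)$. This yields the one-sided continuity of $\xi_\pm$. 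For the coincidence $\xi_\pm=\xi$ on a dense subset of $A$, the set $D\subset A$ of points where $\xi_-(y)\neq\xi_+(y)$ produces pairwise disjoint open diamonds $V_{\xi(y)}(\xi_-(y),\xi_+(y))$ in $\gp$ (using the nesting Lemma~\ref{lem:semigroup}), hence is at most countable in the second-countable space $\gp$; uniqueness of the extensions then follows from one-sided continuity on the dense set $A\setminus D$.

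The main obstacle is the mixed-positivity step: positivity of triples is an open but not closed condition, so naive limits can degenerate. The argument must travel through the explicit nested-diamond architecture furnished by Lemma~\ref{lem:semigroup} and Corollary~\ref{coro:inclus}, anchored by the fixed points $p,q,r\in A$ that force the three limits into non-degenerate diamonds with the correct extremities.
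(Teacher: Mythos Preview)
The paper states this result as an ``immediate corollary'' of Proposition~\ref{pro:ext} and gives no proof, so your sketch is essentially the argument the authors have in mind: define $\xi_\pm$ as one-sided limits via Proposition~\ref{pro:ext}, use interleaving for well-definedness, and push positivity through the limit by anchoring with auxiliary points of~$A$ and invoking Corollary~\ref{coro:inclus} together with the nesting Lemma~\ref{lem:semigroup}. Your diagonal-extraction argument for the last bullet and the disjoint-diamonds countability argument for the jump set are the standard moves.

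Two small points to tighten. First, you establish mixed positivity only for \emph{triples}, but for $\xi_+$ and $\xi_-$ to be positive \emph{maps} you also need positive quadruples; the same anchor-point technique (insert four auxiliary points of~$A$ separating $x<y<z<w$ and use Corollary~\ref{coro:inclus} and Lemma~\ref{lem:subtrip}) handles this, so just say so. Second, the claim that $A\setminus D$ is dense in~$A$ is not automatic from countability of~$D$ alone, since $A$ itself may be countable; you should also argue that at any $y\in A$ with $\xi_-(y)=\xi_+(y)$ one actually has $\xi(y)=\xi_\pm(y)$ (because $\xi(y)$ lies in every diamond $V(\xi(a_n),\xi(c_n))$ whose extremities both converge to the common limit, and these nested closures intersect in a single point by the parameterization of Fact~\ref{fact:param-posit-semi}), and that the jump diamonds, being pairwise disjoint open subsets of the fixed diamond $V(\xi(a),\xi(c))$, cannot exhaust any subinterval's worth of~$A$. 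These are routine once noticed.
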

\begin{proof}[Proof of Proposition~\ref{pro:ext}] Let us define $x_n=\phi(b_n)$.
We can write $x_n=m_n\cdot x_{n-1}$, with $m_n\in {\ms N}_{\phi(c)}$. Thus by induction we have 
\begin{equation*}
x_n=m_n\cdots m_1\cdot x_0\ .
\end{equation*}

But we know that $V^{*}_{\phi(c)}(x_0, \phi(b))$ is a relatively compact region of
$\Omega_{\phi(c)}$ by Proposition~\ref{pro:bounded}. Thus
$\{x_n\}_{n\in\mathbb N}$ is relatively compact in $\Omega_{\phi(c)}$. It follows that
\begin{equation*}
\pi_n=m_n\cdots m_1,
\end{equation*}
is a bounded sequence in ${\ms N}_{\phi(c)}$. We now prove that this sequence converges. Assume that we have subsequences that converge to different limits~$u$ and~$v$. After extracting further subsequence, we may find subsequences
\begin{equation*}
q_i=\pi_{n_i},\ \ p_i=\pi_{m_i},\hbox{ with } n_i\leq m_i
\end{equation*}
such that $\{q_i\}_{i\in\mathbb N}$ converges to $u$ and $\{p_i\}_{i\in\mathbb
  N}$ converges to $v$. It follows that $u=w_1\cdot v$ with $w_1\in
\overline{\ms N}_{\phi(c)}$. Symmetrically, $v=w_0\cdot u$ with
$w_0\in\overline{\ms N}_{\phi(c)}$. It follows that $w_0\cdot w_1=1$, thus
$w_0$ and $w_1$ are invertible in the closed semigroup~$\overline{\ms N}_{\phi(c)}$, hence equal to the identity.  In particular $u=v$ and  $\{x_n\}\inn$ converges.

The proof for the sequence $\{a_n\}_{n\in\mathbb N}$ is symmetric.
\end{proof}

\subsubsection{Positivity and continuity}
In some cases, it suffices to show that the image of every triple is positive

\begin{proposition}[\sc Triples suffice]\label{pro:triplenough} Let $\phi$ be a continuous map from  an interval   $I$ to $\gp$ such that the image of every ordered triple is positive, then $\phi$ is positive.
\end{proposition}
\begin{proof}  
  Since $[x,y]$ is connected and $\phi$~is continuous,
  $V_{\phi(z)}(\phi(x),\phi(y))$ does not depend on $z$ in $]x,y[$, and we
  denote it $W(x,y)$.

Let $V(t,y)$ be the unique diamond with extremities $\phi(t)$ and $\phi(y)$ obtained in Lemma~\ref{lem:semigroup} so that 
\begin{equation}
	V(t,y)\subset W(x,y)\ .\label{eq:proof-VinW}
\end{equation}
\vskip 0.2 truecm
\noindent{\sc Main step:}
We first prove that if $x<t<y$, then 
\begin{equation}
	W(t,y)=V(t,y)\ . \label{eq:proof-triplenough1}
\end{equation}
Let us consider 
$$
U=\{t\in ]x,y[\mid W(t,y)=V(t,y)\}
$$
Let us write $W(x,y)=\ms N \cdot \phi(x)$, where $\ms N$ is an open  semigroup in
$\ms U_{\phi(y)}$. We can thus write $\phi(t)=n_t\cdot \phi( x)$, with $t\mapsto
n_t$ a continuous map defined on $]x,y[$ with values in $\ms N$; the limit
of~$n_t$ when $t$~tends to~$x$ is equal to $\id$.  Then  $V(t,y)=n_t W(x,y)$. We now proceed to the proof and show that $U$~is open, nonempty and closed.

\begin{enumerate} 
\item The set $U$ is also the set of $t$ for which there exists~$s$, with $t<s<y$ such that $\phi(s)$ is in $V(t,y)$. In other words,  $n_t^{-1}n_{s}$ belongs to $\ms N$. Thus  $U$ is open. 
\item  Since $\ms N$ is open, given  $s$, for all  $t$ close enough to $x$ we have  $n_t^{-1} n_s$ is in $\mathsf{N}$. Thus $n_s\in n_t\ms N$, hence $\phi(s)\in V(t,y)$. Therefore $U$ is nonempty. 
\item Let~$t$ be in the closure of~$U$. Let $s$ be $>t$ and let $\seq{t}$ be a
  sequence in~$U$ converging to~$t$. Thus $\seqm{n_{t_m}^{-1} n_s}$ converges
  to $n_{t}^{-1} n_{s}$. Since, for $m$ big enough, we have $s>t_m$, the
  element $n_{t_m}^{-1} n_s$ belongs to~$\ms N$; hence $n_{t}^{-1} n_{s}$ belongs
  to~$\overline{\ms N}$, \emph{i.e.}\ $\phi(s)$ belongs to $\overline{V}(t,y)$. As the map~$\phi$ is transverse, $\phi(t)$ is transverse
  to~$\phi(s)$ and this implies that $\phi(s)$ belongs to $V(t,y)$ and $n_{t}^{-1} n_{s}$ belongs
  to~${\ms N}$. Therefore $t$~belongs to~$U$, and we have completed the proof that $U$ is closed.
\end{enumerate}
The proof of the Equation~\eqref{eq:proof-triplenough1} is now complete.

\vskip 0.2 truecm
\noindent{\sc Conclusion:}  Let $(a,b,c,d)$ so that $a<b<c<d$, with all subtriples of $(\phi(a),\phi(b),\phi(c),\phi(d))$ positive. By item~\eqref{it:basic2} of Proposition~\ref{pro:basic}, we only have to prove that 
$$
V_{\phi(d)}(\phi(a), \phi(b))=V^{*}_{\phi(c)}( \phi(a), \phi(b))\ ,
$$
Observe that 
$$W(a,b)\subset W(a,d)=V_{\phi(b)}(\phi(a),\phi(d))\ ,$$
by Equation~\eqref{eq:proof-triplenough1}.  Thus $\phi(d)$ does not belong to $W(a,b)$ and hence belongs to $W^{*}(a,b)$ by Lemma~\ref{lem:semigroup}. We thus  have 
  $$
V_{\phi(d)}(\phi(a),\phi(b))=W^*(a,b)=V^{*}_{\phi(c)}(\phi(a),\phi(b))\ .$$
This completes the proof that the  quadruple $(\phi(a),\phi(b),\phi(c),\phi(d))$ is positive, hence of the proposition. 
 
\end{proof}

\subsection{Proximal elements}

In this section we show that positive equivariant maps give rise to proximal elements. 

We first prove the following proposition for elements in $\G$: 
\begin{proposition}
  Let~$g$ be in~$\G$ and let $g^-$, $g^+$, and~$a$ be in~$\gp$
  such that $g^-$ and~$g^+$ are fixed by the action of~$g$ and
  that the quadruple $(g^-, a, g\cdot a, g^+)$ is positive.

  Then the action of~$g$ on~$\gp$ is proximal, its attracting fixed point
  is~$g^+$, and its repelling fixed point is~$g^-$.
\end{proposition}
\begin{proof}
  Up to the action of $\operatorname{Aut}_0(\mk g)$ we can assume that
  $g^-$ is the point in~$\gp$ with stabilizer equal
  to~$\mathsf{P}_\Theta$, $g^+$ is the point with stabilizer
  $\mathsf{P}_{\Theta}^{\mathrm{opp}}$, and that $a=n\cdot g^+$ with $n$
  in~$\mathsf{N}\subset \mathsf{U}_\Theta$. One then has $g$ belonging
  to~$\mathsf{L}_\Theta$ and $g\cdot a = n'\cdot g^+$ with $n'$ in
  $\mathsf{N}$ equal to $g ng^{-1}$.

  For every~$\alpha$ in~$\Theta$, we denote by $\pi_\alpha\colon
  \mathfrak{u}_\Theta\to \mathfrak{u}_\alpha$ the $\mathsf{L}_\Theta$-equivariant
  projection and by $C_\alpha$ the salient invariant open convex cone
  in~$\mathfrak{u}_\alpha$ defining positivity (cf.\
  Remark~\ref{rem:detail_on_cones}). 

  Let~$\alpha$ be in~$\Theta$. The positivity of the quadruple  $(g^-, a,
  g\cdot a, g^+)$ implies that the elements $x=\pi_\alpha( \log n)$
  and $x'=\pi_\alpha( \log n')$ are both in~$C_\alpha$ and that $x-x'$ also
  belongs to~$C_\alpha$.  Let $A$  be the automorphism of $\mathfrak{u_\alpha}$ given by the restriction of the adjoint action of~$g$ to the
  subspace~$\mathfrak{u}_\alpha$; 
 one has thus $A(C_\alpha)=C_\alpha$ and
  $A(x)=x'$. This implies that the set $B\defeq (-x+C_\alpha)\cap
  (x-C_\alpha)$ is sent into $(-x'+C_\alpha)\cap
  (x'-C_\alpha)$ by~$A$ (where $x-C_\alpha =\{ x-y \mid y\in C_\alpha\}$). Therefore the element~$A$ is contracting for the norm
  on~$\mathfrak{u}_\alpha$ whose unit ball is the open convex set~$B$.

  We deduce from this that all the eigenvalues of the adjoint action
  of~$g$ on $\bigoplus_{\alpha\in \Theta} \mathfrak{u}_\alpha$ are of
  modulus less than $1$. Since this subspace generates~$\mathfrak{u}_\Theta$ \cite{Kostant_RootLevi}, we have also
  that all the eigenvalues of~$g$ on~$\mathfrak{u}_\Theta$ are of modulus
  $<1$. This means precisely that $g^+$ is an attracting fixed point for
  the action of~$g$ on~$\gp$ and thus $g$~is proximal. For the same
  reasons, $g^-$ is the repelling fixed point of~$g$.
\end{proof}

From this, we immediately get:

\begin{proposition}\label{pro:prox-pos}
  Let~$\gamma$ be a homeomorphism of the circle~$S^1$ having one attracting
  fixed point~$\gamma^+$ and one repelling fixed~$\gamma^-$ in~$S^1$. Let
  $S\subset S^1$ be an infinite $\gamma$-invariant set containing $\gamma^+$
  and~$\gamma^-$ and
  let $\xi$ be positive map from~$S$ to~$\gp$. Assume that there exists an element~$g$ in~$\G$ such
  that, for all~$s$ in~$S$,
  \[
    \xi(\gamma(s))=g\cdot \xi(s)\ ,
  \]
  Then $g$~is proximal and $\xi(\gamma^+)$ is the attracting fixed point
  of~$g$ and similarly $\xi(\gamma^-)$ is the repelling fixed point of~$g$.
\end{proposition}

\section{Triples, Tripods and metrics}\label{sec:metric}

In this section,  we construct  for every positive triple $(a,b,c)$  a
complete metric on  the diamond $V_c(a,b)$ (Definition~\ref{def:diammet}). We
also show that this family of metrics satisfies contraction properties (Propositions~\ref{pro:Acontract2} and~\ref{pro:contract2}).

We first do it for triples of a special type that we call tripods.

\subsection{Tripods and metrics}

Recall that in Proposition~\ref{pro:pref-circle},
we fixed~$\mathcal{H}$ a class of pairs $(\mathsf{H}, C)$ where the subgroups  $\mathsf H$
are isogenic to $\psld$ and $C$ is an  $\mathsf H$-orbit, isomorphic to~$\rp$
and called an {\em $\ms H$-circle}.

\begin{definition} A {\em tripod} is a triple of pairwise distinct points
  on~$C$ for some $(\mathsf{H},C)$ in~$\mathcal{H}$.
\end{definition}
 A tripod is always positive. If $\tau=(x,t,y)$ is a tripod, we write  
 \[
  \tau^-=x\, ,\  \tau^0=t\, , \ \tau^+=y\, ,\ \overline\tau\defeq(y,t,x)\, ,\  V_\tau\defeq V_{\tau^0}(\tau^-,\tau^+)\, .
\] 
More generally, for a positive triple $t=(a,b,c)$, we write $t^-=a$, $t^0=b$,
$t^+=c$ and $V_t\defeq V_b(a,c)$.

Let $\mathcal T_0$ be the set of tripods. Observe that $\operatorname{Aut}_0(\mk
g)$ acts transitively on the left  on the space of
tripods, and that the positive circle containing a tripod is unique.

The stabilizer of any
tripod is compact (\emph{cf.}\ Section~\ref{sec:trip-param} below),
   in particular $\operatorname{Aut}_0(\mk g)$ acts properly on the space of
tripods, and we can then fix~$d$ an $\operatorname{Aut}_0(\mk g)$-invariant Riemannian metric on the set of tripods~$\mathcal T_0$.

\subsubsection{Tripods and the parametrization}
\label{sec:trip-param}

Let us consider (as 
in Paragraph~\ref{sec:param}),
the convex cone~$\mathsf{C}$ and 
the   parametrization $\Psi\colon\ms C\to \ms N$ equivariant with respect to $\mathsf{L}^{\circ}_{\Theta}$
 ($\Psi$~is
given by the product of exponential maps). Note that $\Psi$~extends continuously
to a map~$\overline{\mathsf{C}}\to \mathsf{U}_\Theta$ that is also $\mathsf{L}^{\circ}_{\Theta}$-equivariant.

Let $h$ be the element of $\ms  C$ corresponding to the unipotent associated
to the preferred $\sld$ --- see the proof of Proposition~\ref{pro:pref-circle}. Let $\ms K_h$
be the stabilizer of $h$ in $\ms L^{\circ}_{\Theta}$. Since the stabilizer of a positive triple is compact, it follows that $\ms K_h$~is compact.

If now $x$ and $y$ are transverse points in $\gp$ and $\sigma\colon
\mathsf{U}_\Theta\to \mathsf{U}_y$ is  a U-pinning at~$y$, then the map
\begin{equation*}
\Psi^\sigma \colon \mathsf{C}\mapsto \gp, \  u\mapsto \sigma\circ\Psi(u)\cdot x\, ,
\end{equation*}
is a parametrization of the diamond $V_t(x,y)$ with $t\defeq \Psi^{\sigma}(h)$.
 We then define

\begin{definition}
 Given a tripod $\tau=(x,t,y)$ a {\em $\tau$-parametrization of the diamond} $V_\tau$, is a map $\Psi_\tau$ of the form $\Psi^\sigma$ so that $\Psi^\sigma(h)=t$.
\end{definition} 

From the definition follows

\begin{proposition}\label{pro:paramkh}
 Given a tripod $\tau$, a $\tau$-parametrization of the diamond exists and is unique up to postcomposition by the stabilizer of $\tau$ (equivalently up to precomposition by $\ms K_h$).
\end{proposition}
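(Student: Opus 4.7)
The plan is to establish existence by transporting a reference parameterization via the transitivity of $\mathsf G$ on tripods, and then to derive uniqueness by measuring the ambiguity between two parameterizations, expressing it equivalently on the target side as $\mathrm{Stab}(\tau)$ and on the source side as $\mathsf K_h$.

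For existence, I would fix once and for all a reference tripod. Let $\mathsf H_0$ be the positive $\psld$ obtained from $h$ by Jacobson--Morozov, let $C_0$ be the associated positive circle, and pick any distinct $x_0,y_0\in C_0$ together with a U-pinning $\sigma_0$ at $y_0$ chosen so that $\sigma_0(\Psi(h))$ is the image in $\mathsf U_{y_0}$ of the distinguished unipotent $\exp(\log h)\in\mathsf H_0$. Then $t_0\defeq\sigma_0(\Psi(h))\cdot x_0$ lies on $C_0$ because $\mathsf H_0$ preserves $C_0$, so $\tau_0=(x_0,t_0,y_0)$ is a tripod with $\Psi^{\sigma_0}(h)=t_0$. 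Given any other tripod $\tau=(x,t,y)$, $\mathsf G$-transitivity on $\mathcal T_0$ supplies $g\in\mathsf G$ with $g\cdot\tau_0=\tau$, and the conjugated U-pinning $\sigma\defeq g\,\sigma_0(\cdot)\,g^{-1}$ at $y=g\cdot y_0$ verifies $\Psi^\sigma(u)=g\cdot\Psi^{\sigma_0}(u)$ by direct substitution, hence $\Psi^\sigma(h)=g\cdot t_0=t$.

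For uniqueness, suppose $\Psi^\sigma$ and $\Psi^{\sigma'}$ both parameterize $V_\tau$ and send $h$ to $t$. Transporting along the existence construction I may assume $\tau=\tau_0$. Two U-pinnings at $y_0$ compatible with the same positive diamond lie in a single $\mathsf L^\circ_{y_0}$-orbit, so $\sigma'=c_l\circ\sigma$ for conjugation $c_l$ by some $l\in\mathsf L^\circ_{y_0}$. The fact that $\sigma$ and $\sigma'$ yield the same base point $x_0$ of the diamond forces $l\in\mathsf L^\circ_{x_0,y_0}$, and the equality $\Psi^{\sigma'}(h)=\Psi^{\sigma}(h)$ forces $l\cdot t_0=t_0$; together, $l\in\mathrm{Stab}(\tau_0)$. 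A short computation then gives $\Psi^{\sigma'}(u)=l\cdot\Psi^\sigma(u)$, exhibiting the ambiguity as post-composition by an element of $\mathrm{Stab}(\tau)$.

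To see this is the same as pre-composition by $\mathsf K_h$, I would use the $\mathsf L^\circ$-equivariance $\Psi(\mathrm{Ad}_k u)=k\,\Psi(u)\,k^{-1}$ from Paragraph~\ref{sec:param}: the pinning $\sigma$ transports $l\in\mathsf L^\circ_{y_0}$ to an element $l^*\in\mathsf L^\circ$ such that $l\cdot\Psi^\sigma(u)=\Psi^\sigma(\mathrm{Ad}_{l^*}u)$ for all $u\in\mathsf C$. Unwinding $l\cdot t_0=t_0$ through $t_0=\sigma(\Psi(h))\cdot x_0$ yields $\mathrm{Ad}_{l^*}h=h$, that is $l^*\in\mathsf K_h$; conversely, each $k\in\mathsf K_h$ arises this way. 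The delicate point, and the one that requires the most care, is exactly this piece of bookkeeping: identifying how a U-pinning at $y$ induces an isomorphism between $\mathsf L^\circ$ and a subgroup of $\mathsf L^\circ_y$ intertwining the two conjugation actions on $\mathsf U\cong\mathsf U_y$, so that $\mathrm{Stab}(\tau)$ and $\mathsf K_h$ match cleanly as compact groups acting on the set of parameterizations.
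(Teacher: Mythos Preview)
Your argument is correct and is precisely the natural unpacking of the paper's one-line assertion that the proposition ``follows from the definition''; the paper gives no further detail, so there is nothing substantive to compare. Existence via $\mathsf G$-transitivity on tripods and uniqueness via identifying the ambiguity group is the only reasonable route.

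One small point of phrasing: the step where you say ``the fact that $\sigma$ and $\sigma'$ yield the same base point $x_0$ of the diamond forces $l\in\mathsf L^\circ_{x_0,y_0}$'' is not quite a constraint, since $x_0$ is part of the given tripod data and both $\Psi^\sigma$ and $\Psi^{\sigma'}$ use it by definition. The genuine constraints are that $\sigma(\mathsf N)=\sigma'(\mathsf N)$ as subsets of $\mathsf U_{y_0}$ (since $\mathsf U_{y_0}$ acts simply transitively on $\Omega_{y_0}$, equal diamonds force equal semigroups) and that $\sigma(\Psi(h))=\sigma'(\Psi(h))$. From these, and assuming as the paper does later that a U-pinning extends to an isomorphism $\mathsf P\to\mathsf P_{y_0}$ (i.e.\ comes from conjugation by an element of $\mathsf G$), the automorphism $\sigma^{-1}\circ\sigma'$ of $\mathsf U$ is inner by some $p\in\mathsf P$, and the two constraints then pin $p$ down modulo $\mathsf U$ to an element of $\mathsf K_h$. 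Your identification of this bookkeeping as the delicate point is exactly right, and your treatment of it is adequate.
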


The next proposition is crucial; it insures that a sequence of
parametrizations of diamonds associated with tripods converges to the
constant map as soon as one sequence in the image converges, precisely
\begin{proposition}[\sc Contraction in corners]\label{pro:dm-contract}

Let $\seq{\tau}$ be a sequence of tripods, with $\tau_m=(x_m,t_m,y)$, and $\Psi_{\tau_m}$ a $\tau_m$-parametrization for all $m$. 

Assume that $\seq{x}$ converges to a point $x$ transverse to $y$. Assume
that there exists a converging sequence $\seq{k}$ in  the cone $\ms C$, and
such that
\begin{equation}
  \lim_{m\to\infty}\Psi_{\tau_m}(k_m)=x\ .
\end{equation}
Then for any sequence $\seq{k'}$ in $\ms C$ that is  bounded in~$\overline{\mathsf{C}}$, 
\begin{equation}
  \lim_{m\to\infty}\Psi_{\tau_m}(k'_m)=x\ .
\end{equation}
\end{proposition}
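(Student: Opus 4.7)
The plan is to use $\G$-equivariance of the parameterization to reduce the statement to a contraction property for the Levi subgroup $\ms L_{x_0,y}^\circ$ acting on the diamond $V(x_0,y)$, and then to exploit properness of this action together with a Cartan-decomposition argument.

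First, I would eliminate the varying corner $x_m$ by a $\ms U_y$-translation. Since $\ms U_y$ acts simply transitively on $\Omega_y$, the unique $u_m\in\ms U_y$ with $u_m\cdot x_0=x_m$ satisfies $u_m\to 1$. Replacing $\tau_m$ by $u_m^{-1}\tau_m=(x_0,u_m^{-1}t_m,y)$ and $\Psi_{\tau_m}$ by $u_m^{-1}\circ\Psi_{\tau_m}$ (using $\G$-equivariance of the parameterization, modulo the irrelevant right $\ms K_h$-ambiguity of Proposition~\ref{pro:paramkh}), both hypothesis and conclusion are preserved since $u_m\to 1$. So I may assume $x_m=x_0$ for all $m$. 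With $x_0$ and $y$ both fixed, the connected Levi $\ms L_{x_0,y}^\circ$ stabilizes $V(x_0,y)$ and acts transitively on tripods with these extremities. Fixing a reference tripod $\tau_*=(x_0,t_*,y)$, I write $\tau_m=\ell_m\cdot\tau_*$ with $\ell_m\in\ms L_{x_0,y}^\circ$; a direct computation from the definition of U-pinnings yields $\Psi_{\tau_m}(u)=\ell_m\cdot\Psi_{\tau_*}(u)$. Setting $z_m:=\Psi_{\tau_*}(k_m)\to z_\infty$ and $z'_m:=\Psi_{\tau_*}(k'_m)\to z'_\infty$, the problem reduces to: $\ell_m z_m\to x_0$ implies $\ell_m z'_m\to x_0$.

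Next, I would analyze the dynamics of $\ell_m$. By Fact~\ref{fact:param-posit-semi} and the $\ms L^\circ$-equivariance of $\Psi$, stabilizers in $\ms L_{x_0,y}^\circ$ of points in $V(x_0,y)$ are compact, hence the action on $V(x_0,y)$ is proper. In the non-degenerate case where $z_\infty\in V(x_0,y)$, the assumption $\ell_m z_m\to x_0\notin V(x_0,y)$ forces $\ell_m\to\infty$. Taking a Cartan decomposition $\ell_m=k_m a_m k'_m$ with $k_m,k'_m$ bounded in a maximal compact of $\ms L_{x_0,y}^\circ$ and $a_m$ diverging in a closed positive Weyl chamber, and extracting a subsequence so that $k_m\to k_\infty$, $k'_m\to k'_\infty$, and the direction of $a_m$ converges, the action of $a_m$ on the compact space $\overline{V(x_0,y)}$ contracts, off a proper subvariety, to the attracting subset associated with this Cartan direction. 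The hypothesis pins this attractor at $k_\infty^{-1}\cdot x_0=x_0$; then for $z'_\infty$ in the basin of attraction, $a_m k'_m z'_m\to x_0$ and hence $\ell_m z'_m\to k_\infty\cdot x_0=x_0$.

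The main obstacle will be this dynamical step: precisely identifying the attractor and verifying that $z'_m$ eventually lies in its basin of attraction. This requires using the structure of positive configurations and the action of $\ms L_{x_0,y}^\circ$ on $\overline{V(x_0,y)}$ (Propositions~\ref{pro:diam} and~\ref{pro:bounded}) to rule out that $z'_m$ remains in the repelling locus. The degenerate case $z_\infty\in\partial V(x_0,y)$, corresponding to $k_\infty$ on the boundary of $\mathsf{C}$, will be handled via the continuous extension of $\Psi$ to $\overline{\mathsf{C}}$ together with the salient-cone property of Fact~\ref{fact:param-posit-semi}, which constrain $k_\infty$ compatibly with the limit $x_0$.
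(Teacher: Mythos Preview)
Your reduction to $x_m\equiv x_0$ and the identification $\Psi_{\tau_m}=\ell_m\cdot\Psi_{\tau_*}$ with $\ell_m\in\ms L_{x_0,y}^\circ$ match the paper exactly. The divergence is in the final step, and there your argument has a genuine gap.

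The paper does \emph{not} analyze the dynamics of $\ell_m$ on the flag variety. Instead it transfers everything to the \emph{linear} action of the corresponding $g_m^0\in\ms L^\circ$ on the cone~$\ms C$ via the equivariant diffeomorphism~$\Psi$: one has $\Psi_{\tau_m}(\ell)\to x_0$ if and only if $g_m^0\cdot\ell\to 0$ in~$\overline{\ms C}$. The hypothesis then reads $c_m\defeq g_m^0\cdot k_m\to 0$. Since $k_m\to k_\infty$ lies in the \emph{open} cone and $k'_m$ is bounded, there is $R>0$ with $Rk_m-k'_m\in\overline{\ms C}$ for all~$m$; because $g_m^0$ preserves~$\overline{\ms C}$, this gives $g_m^0\cdot k'_m\in \overline{\ms C}\cap(Rc_m-\overline{\ms C})$. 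The latter set is compact (salience of the cone) and shrinks to~$\{0\}$ as $c_m\to 0$. That is the whole proof: two lines of convex-cone bookkeeping, no Cartan decomposition, no attractor analysis.

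Your proposed route via $KAK$ and attracting sets does not close. The assertion ``the hypothesis pins this attractor at $x_0$'' is unjustified: from $a_m(k'_\infty z_\infty)\to x_0$ you only learn that the single point $k'_\infty z_\infty$ lies in the stable set of~$x_0$ for the limiting Cartan direction. If that direction is on a wall of the Weyl chamber, the stable set of~$x_0$ inside $\overline{V(x_0,y)}$ is a proper subvariety, and there is no reason $k'_\infty z'_\infty$ should lie in it. Ruling out the wall case would require exactly the information that $k_\infty$ is interior to~$\ms C$, and making that precise essentially forces you back to the cone-ordering argument $k'_m\preceq Rk_m$ that the paper uses. The references you suggest (Propositions~\ref{pro:diam} and~\ref{pro:bounded}) concern inclusions of diamonds and do not supply what is needed here; the actual input is the salience of~$\overline{\ms C}$ from Fact~\ref{fact:param-posit-semi}.
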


\begin{proof}
  Since $x$ is transverse to $y$, by replacing $\tau_m$ by $u_m\tau_m$ where $\seq{u}$ is a converging sequence in $\ms U_y$, we may as well assume that $\seq{x}$ is constant and equal to~$x$.

Using the fact that $\operatorname{Aut}_0(\mk g)$ acts transitively on
tripods, let us  write  $t_m=g_m\cdot t_0$, with $g_m$ fixing~$x$ and~$y$. 
Thus
\begin{equation*}
\Psi_{\tau_m}(h)= g_m \cdot t_0= g_m\cdot \Psi_{\tau_0}(h) =g_m \sigma( \Psi(h))\cdot x \ .
\end{equation*}
Note that the U-pinning $\sigma\colon \mathsf{U}_\Theta\to \mathsf{U}_y$ comes from
an element~$\sigma$ of~$\operatorname{Aut}_0(\mk g)$.
Denote $g_{m}^{0} = \sigma^{-1}\circ g_m \circ \sigma$; it is
an element of~$\operatorname{Aut}_0(\mk g)$ stabilizing the standard
unipotent subalgebras $\mathfrak{u}_\Theta$ and $\mathfrak{u}_{\Theta}^{\mathrm{opp}}$.
Up to maybe
precomposing by an element of~$\mathsf{K}_h$, we may assume that
$\Psi_{\tau_m}$ is the map $k\mapsto \sigma( \Psi( g_{m}^{0}\cdot k)) \cdot x$.

Therefore we have, for any $\seq{\ell}$ in~$\overline{ \mathsf{C}}$, that $\seqm{
\Psi_{\tau_m}(\ell_m)}$ converges to $x$ if and only if the sequence $\seqm{
g_{m}^{0}\cdot \ell_m}$ converges to~$0$ in~$\overline{ \mathsf{C}}$.

For any~$y$ in~$\overline{\mathsf{C}}$, let 
\begin{equation*}
	K(y)\defeq \overline{ \mathsf{C}} \cap \bigl( y - \overline{\mathsf{C}}\bigr)\ .
\end{equation*}
Since $\overline{ \mathsf{C}}$ is salient, $K(y)$ is compact for any $y$. 

From the previous discussion, we  get that the sequence $\seqm{ c_m=g_{m}^{0}\cdot k_m}$ converges
to~$0$. Thus, using again the fact that $\overline{ \mathsf{C}}$ is salient, for every positive real~$R$, the sequence of compact sets $\seqm{K(R\cdot c_m)}$
converges to~$\{0\}$.

Let now $\seqm{ k_{m}^{\prime}}$ be a sequence in~$\mathsf{C}$,  bounded in~$\overline{\mathsf{C}}$. Since by hypothesis $\seqm{ k_{m}}$ converges  in~$\mathsf{C}$,  there exists a positive real 
 $R$ such that, for all~$m$, $R\cdot k_m -k_{m}^{\prime}$ belongs
 to~$\overline{\mathsf{C}}$. In other words: $k_{m}^{\prime}$ belongs to  $R\cdot k_{m} -\overline{\mathsf{C}}$. Thus, for all~$m$, $g_{m}^{0}\cdot k_{m}^{\prime}$
belongs to $K( R\cdot c_m )$. Hence the sequence $\seqm{ g_{m}^{0}\cdot
k_{m}^{\prime}}$ converges to~$0$. This means that  the sequence $\seqm{ \Psi_{\tau_m}(
k_{m}^{\prime})}$ converges to~$x$ as wanted.
\end{proof}

\subsubsection{Diamond metrics for tripods}

We choose once and for all a Euclidean distance~$d_0$  on the convex cone $\mathsf{C}$, associated with the
Riemannian~$g_0$ induced by a $\ms K_h$-invariant scalar product on $\mathfrak{u}_\Theta$. 
This distance~$d_0$ is $\ms K_h$-invariant and extends to~$\overline{ \mathsf{C}}$.

\begin{definition} 
Given a tripod $\tau=(x,t,y)$, let $\Psi_\tau$ be a $\tau$-pa\-ra\-me\-tri\-za\-tion of $V_\tau$, let 
\begin{equation*}
g^+_\tau\defeq (\Psi_\tau)_*g_0\ , \ \ g^-_\tau\defeq (\Psi_{\overline\tau})_*g_0\
, \ \ g_\tau=g_{\tau}^{+} + g_{\tau}^{-} \ ,
\end{equation*}
as well as  $d^+_\tau$, $d^-_\tau$, and $d_\tau$ the associated distances so that
$$
d^\pm_\tau\leq d_\tau\leq d^+_\tau+d^-_\tau\ .
$$ 
The metric $g_\tau$ is the {\em diamond metric} (for the tripod~$\tau$) on $V_\tau$,
while $d_\tau$ is the  {\em diamond distance}. 
\end{definition}

The terminology is justified by 
\begin{proposition}[\sc Uniqueness and Completeness]\label{pro:dm-complete}
  The diamond metric is independent of the choice of the $\tau$-parametrization and
  only depends on~$\tau$. Moreover, $d_\tau$ is complete and proper on
  $V_\tau$.

  There exists a function $F\colon \mathbb{R}_{>0} \to \mathbb{R}_{>0}$ with
  $\lim_0 F=1$ and such that the following holds:
  For any tripods~$\tau$ and $\tau'$ with the same extremities~$\tau_-$
  and~$\tau_+$, if $d_\tau( \tau^0, \tau^{\prime 0})\leq \varepsilon$
then
$$
 F(\varepsilon)^{-1}d_{\tau'} \leq d_{\tau}\leq F(\varepsilon)\  d_{\tau' } \ .
$$ 
 
\end{proposition}

\begin{proof} The independence on the parametrization is a consequence of Proposition~\ref{pro:paramkh} and the fact that $d_0$ itself is invariant under the group~$\ms K_h$. 

Let us now prove completeness. Let $\seq{u}$ be a Cauchy sequence for
$d_\tau$, then it is a Cauchy sequence for both $d^+_\tau$ and $d^-_\tau$. It
follows that the sequences $\seq{v}$ and $\seq{w}$ defined by
\begin{equation*}
  v_m=\Psi_\tau^{-1}(u_m) , \
  w_m=\Psi_{\overline\tau}^{-1}(u_m)\ ,
\end{equation*}
are both Cauchy sequences.      
Since $\overline{\mathsf{ C}}$ is complete with respect to the metric $d_0$, there exist~$v$ and~$w$ in~$\overline{\mathsf{C}}$ such that
\begin{equation*}
		\lim_{m\to\infty}v_m=v  , \ \lim_{m\to\infty}w_m=w\ .
\end{equation*}
 Since by construction $\Psi_{\tau}$ and $\Psi_{\overline\tau}$ extend continuously to the closure of~$\mathsf{C}$,
\begin{equation*}
	\Psi_{\tau}(v)=\Psi_{\overline\tau}(w)=\lim_{m\to\infty}u_m\eqdef u\ .
\end{equation*}
Obviously $u$ belongs to $\overline{V}_\tau$.
By construction $u=n_y\cdot x$ and $u=n_x\cdot y$ where $n_y$ belongs to $\ms U_y$ and  $n_x$ belongs to $\ms U_x$. Since $x$ and $y$ are transverse, it follows that $u$ is transverse to both $y$ and $x$. Thus $u$ belongs to $V_\tau$. Hence $d_\tau$ is complete.
\vskip 0.2 truecm
  Let us prove the last part.  Observe now by hypothesis, there
exists~$\ell$ in~$\ms L_\Theta$ such that $\Psi_{\tau'}=\Psi_\tau \circ \ell$. Thus
$$
g^+_{\tau'}=(\Psi_{\tau})_* (\ell_* g_0)\ . 
$$
Recall that $g_0$~is induced by a scalar product on~$\mk u_\Theta$, and that
$\ms L_\Theta$~acts linearly on~$\mk u_\Theta$, it then follows that there is a
function~$F_0$ on~$\ms L_\Theta$ such that
$$
F_0(\ell)g_0\leq \ell_*(g_0)\leq F_0(\ell) g_0\ ,
$$
and with $F_0(\ell)\underset{\ell\to \mathsf{K}_\Theta}{\longrightarrow} 1$
where $\mathsf{K}_\Theta$~is a maximal compact subgroup~$\mathsf{L}_\Theta$. 
Pushing forward by $\Psi_\tau$ we have  
$$
F_0(\ell)g^+_\tau\leq g^+_{\tau'}\leq F_0(\ell) g^+_{\tau}\ .
$$
The same holds for $g^{-}_{\tau}$ and $g^{-}_{\tau'}$. Hence the same
inequality holds for $g_\tau=g_{\tau}^{+}+g_{\tau}^{-}$ and $g_{\tau'}$; 
this concludes the proof with the remark that $d_\tau( \tau^0, \tau^{\prime
  0}) \longrightarrow 0$ implies that $\ell\longrightarrow \mathsf{K}_\Theta$. Precisely, we can define
  $$
  F(\epsilon)=\sup\{F_0(h)\mid d_\tau( \tau^0, h(\tau^{0}))\leq \epsilon\}\ ,
  $$
  and observing that by equivariance $F$ does not depend on the choice of $\tau$. 
\end{proof}

\begin{proposition}[\sc Contraction for tripods]\label{pro:Acontract}
Let $\seq{\tau}$ be a  sequence of tripods. Assume that, for all~$m$ in~$\mathbb{N}$, $V_{\tau_{m+1}}\subset V_{\tau_m}$ and that
\begin{equation}
\bigcap_{m\in\mathbb N}V_{\tau_m}=\{z\}\ .\label{eq:hypT}
\end{equation}
For any positive $R$, let $V_{\tau_m}(R)$ be the ball of radius $R$ and center $\tau^0_m$ with respect to $d_{\tau_m}$. Then on $V_{\tau_m}(R)$, we have 
\begin{equation*}
g_{\tau_0}\leq k_m \cdot g_{\tau_m}\ ,
\end{equation*}
with $\seq{k}$ converging to zero.

\end{proposition}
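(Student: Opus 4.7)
The plan is to translate the metric inequality into a $C^1$-contraction statement for a family of maps on the cone $\ms C$ parameterizing diamonds, and then to derive this contraction from Proposition~\ref{pro:dm-contract} combined with the algebraic structure of the parameterization.

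It suffices, by the symmetry between $\tau$ and $\overline\tau$, to prove the analogous bound for $g^+_{\tau_0}$ and $g^+_{\tau_m}$ alone. Writing $\tau_m = g_m\tau_0$ with $g_m\in\G$ (by $\G$-transitivity on tripods) and choosing parameterizations so that $\Psi_{\tau_m}=g_m\circ\Psi_{\tau_0}$ (as allowed by Proposition~\ref{pro:paramkh} and the $\ms K_h$-invariance of $g_0$), the map $\phi_m \defeq \Psi_{\tau_0}^{-1}\circ\Psi_{\tau_m}$ is a well-defined diffeomorphism of $\ms C$ onto $\Psi_{\tau_0}^{-1}(V_{\tau_m})\subset\ms C$. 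A short computation in these charts shows that, for $u = \Psi_{\tau_m}(k)\in V_{\tau_m}(R)$ and any tangent vector $v$ at $u$, one has $g^+_{\tau_0}|_u(v,v) \leq \|d\phi_m|_k\|_{\mathrm{op}}^2\, g^+_{\tau_m}|_u(v,v)$, so the proposition is reduced to proving $\|d\phi_m\|_{\mathrm{op}}\to 0$ uniformly on the $d_0$-ball $B_R(h)\subset\ms C$.

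The hypothesis $\bigcap_m V_{\tau_m}=\{z\}$ forces $\tau_m^0 = \Psi_{\tau_m}(h)\to z$, and $z$ is an interior point of $V_{\tau_0}$. Passing to a subsequence along which $y_m$ converges and conjugating by a suitable convergent sequence in $\G$ that carries the $y_m$ to a common limit places one in the setting of Proposition~\ref{pro:dm-contract}, from which we obtain $\Psi_{\tau_m}(k'_m)\to z$ for every convergent sequence $k'_m$ in $\ms C$. A standard compactness argument (if $\phi_m$ failed to converge uniformly, a subsequential limit of offending points would contradict the pointwise statement just obtained) promotes this to uniform convergence $\phi_m\to\Psi_{\tau_0}^{-1}(z)$ on the compact set $\overline{B_R(h)}$.

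The remaining step, which I expect to be the main technical obstacle, is to upgrade this $C^0$-convergence of $\phi_m$ to $C^1$-convergence $\|d\phi_m\|_{\mathrm{op}}\to 0$. Since the $\G$-action on $\gp$ and the parameterization $\Psi$ are real-analytic (indeed algebraic), each $\phi_m$ extends to a holomorphic map on a complex neighborhood of $\ms C$. Working in a fixed affine chart around $z$ (which lies in the interior of every $V_{\tau_m}$), one produces uniform complex bounds for $\phi_m$ on a fixed complex neighborhood of $\overline{B_R(h)}$, and a Cauchy-type estimate then transfers the uniform convergence of values into uniform convergence of first derivatives on $B_R(h)$. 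The algebraic structure is indispensable here: without it, $C^0$-convergence of a family to a constant map need not imply $C^1$-convergence.
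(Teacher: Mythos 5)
Your reduction is, modulo the chart, the paper's own computation: writing $\tau_m=h_m\tau_0$ and using $\G$-equivariance of $\tau\mapsto g_\tau$, the optimal constant is the supremum of $g_{\tau_0}(\ms T h_m v,\ms T h_m v)/g_{\tau_0}(v,v)$ over $v$ tangent to $V_{\tau_0}(R)$, because $h_m^{-1}\bigl(V_{\tau_m}(R)\bigr)=V_{\tau_0}(R)$ is a \emph{fixed} compact subset of $V_{\tau_0}$; your $\phi_m=\Psi_{\tau_0}^{-1}\circ h_m\circ\Psi_{\tau_0}$ is the same quantity read in the cone chart (note the paper's choice of domain is cleaner, since $\overline{B_R(h)}$ need not be contained in $\ms C$, while the sets $\Psi_{\tau_m}^{-1}(V_{\tau_m}(R))$ vary with $m$). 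The first genuine problem is your use of Proposition~\ref{pro:dm-contract} to get the $C^0$ statement. Its hypotheses require tripods with a fixed extremity $y$, corners $x_m$ converging to a point $x_0$ \emph{transverse} to $y$, and a seed sequence $k_m$ with $\Psi_{\tau_m}(k_m)\to x_0$; its conclusion is convergence to $x_0=\lim x_m$, not to an interior point. Under hypothesis~\eqref{eq:hypT} the whole diamond, hence both extremities $\tau_m^{\pm}$, collapses onto $z$, so after your normalization the limit of the corners coincides with the limit of the extremity and is not transverse to it: the proposition does not apply. (If instead one does not grant the collapse, then neither the transversality of the limits nor the existence of a convergent seed sequence is verified.) What you actually need, and what the paper extracts directly from~\eqref{eq:hypT}, is that $h_m$ converges to the constant map $z$ uniformly on the fixed compact $V_{\tau_0}(R)$; once that is known, the statement you hoped to get from Proposition~\ref{pro:dm-contract} is immediate and the proposition is superfluous --- in the paper it is reserved for the corner situation of Proposition~\ref{pro:contract2}, where the extremities stay transverse.

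The second problem is the $C^0\Rightarrow C^1$ upgrade, which you correctly identify as the crux but then assume away: a Cauchy estimate needs uniform bounds for the complexified $\phi_m$ on a \emph{fixed} complex neighborhood of the real domain, and such bounds do not follow from uniform convergence on the real domain (moreover $\phi_m$ is only defined where $\Psi_{\tau_m}$ lands in $V_{\tau_0}$, so there is no a priori fixed complex neighborhood on which the family is defined and bounded). The paper is admittedly terse at the corresponding point (``it follows that $h_m$ converges $C^1$''), but there the assertion is about the group elements $h_m$ themselves; to justify it along your lines you should use the structure available: $\phi_m=\Psi_{\tau_0}^{-1}\circ h_m\circ\Psi_{\tau_0}$ is the restriction of a rational (algebraic) map of degree bounded independently of $m$, and for such a bounded family uniform convergence to a constant on an open set does force $C^1$ convergence on compact subsets; alternatively one can argue directly on $h_m$ via a Cartan decomposition and the proximality of the dynamics. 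As written, the step ``one produces uniform complex bounds for $\phi_m$'' is exactly the missing content.
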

\begin{proof}
  Since $\operatorname{Aut}_0(\mk g)$ acts transitively on the space of
  tripods~$\mathcal T_0$, it follows that $\tau_m=h_m\cdot \tau_0$, for
  some~$h_m$ in $\operatorname{Aut}_0(\mk g)$.  Since the construction of the
  tripod metrics is $\operatorname{Aut}_0(\mk g)$-equivariant,  we observe
  that $g_{\tau_m}=h_{m}^{*}g_{\tau_0}$.
  Then, we take
  \begin{align}
    k_m&=\sup\left\{ \frac{g_{\tau_0}(w,w)}{g_{\tau_m}(w,w)}
         \mathrel{\Big|}  w\in \mathsf{T} V_{\tau_m}(R) \right\}\\
       &= \sup\left\{\frac{g_{\tau_0}(w,w)}{g_{\tau_0}(\ms T h_m^{-1}w,
                  \ms T h_m^{-1} w)} \mathrel{\Big|}  w\in \mathsf{T}
         V_{\tau_m}(R) \right\}\\
       &=\sup\left\{\frac{g_{\tau_0}(\ms T h_m
         (v),\ms T h_m (v))}{g_{\tau_0}(v,v)} \mathrel{\Big|}  v\in \mathsf T V_{\tau_0}(R) \right\}\ .\label{eq:Prox1}
  \end{align}
 The hypothesis~\eqref{eq:hypT} says that $\seq{h}$, seen as a sequence of
 diffeomorphisms of~$\gp$ converges uniformly on every compact set  of
 $V_{\tau_0}$ to the constant map. It follows that $\seq{h}$ also converges
 $C^1$ to the constant map on any compact set in $V_{\tau_0}$ and hence
 $\seq{\ms T h}$ converges to zero  uniformly on every compact set  of
 $V_{\tau_0}$. Thus, equality~\eqref{eq:Prox1} shows that $\seq{k}$ converges to zero.\end{proof}

\subsection{Positive triples, tripods and metrics}\label{par:met-trip}

Our goal is to construct a complete metric on the diamond associated with a
positive triple and to prove a generalization of the contraction properties
(Propositions~\ref{pro:Acontract2}  and~\ref{pro:contract2}).

\subsubsection{Approximating triples: the tripod defect}
We will first approximate in a rough sense positive triples by tripods. For
any positive triple $t=(x,z,y)$, let 
$$
{\mathrm K}(t)\defeq \inf\bigl\{d_\tau (z,\tau^0)\mid\tau\in\mathcal T_0\, ,\
z\in V_\tau\, , \ 
(\tau^-,\tau^+)=(x,y) \bigr\}\ .
$$
We call $\mathrm{ K}(t)$ the {\em tripod defect}.

Observe that $\mathrm{K}(t)$ depends continuously on $t$, and that the tripod defect vanishes for tripods. Let also
\begin{align*}
D(t,K_0)&\defeq \bigl\{ \tau\in\mathcal T_0 \mid (\tau^-,\tau^+)=(x,y)\ ,
          d_\tau(z,\tau^0)\leq K_0 \bigr\}\ ,\\
D(t)&\defeq D(t,\mathrm{K}(t))\ .
\end{align*}

\begin{proposition}\label{pro:diameter} 
\begin{enumerate}
\item 	Given $K_0\geq {\mathrm{K}(t})$,  the set $D(t,K_0)$ is compact and non-empty. 
\item ${\mathrm{ K}(t)}=0$ if and only if $t$ is a tripod. 
\item\label{item3:pro:diameter}  For any~$K_0$, there exists a constant $A=A(K_0)$ such that if $t=(a,b,c)$ is a
  positive triple with $\mathrm{K}(t)\leq K_0$, then for every $\tau_0$ and
  $\tau_1$ in  $D(t, K_0)$, we have, on $V_{b}(a,c)$ 
  \[g_{\tau_0}\leq A g_{\tau_1}\ .
  \]
  Furthermore $A(K_0)$ tends to~$1$ as $K_0$~goes to~$0$. 
\end{enumerate}
\end{proposition}

\begin{proof} Let $t=(a,b,c)$ be a positive triple. Let $\seq{\tau}$ be a sequence of tripods such that $(\tau^-_m,\tau^+_m)=(a,c)$ and 
$$
\seqm{d_{\tau_m}(b,\tau^0_m)}\  , $$ is bounded.
Let $\seq{g}$ be a sequence of elements in $\operatorname{Aut}_0(\mk g)$,
stabilizing~$a$ and~$c$ and such that $\seqm{g^{-1}_m(\tau^0_m)}$ is constant and let~$\tau^0$ be this constant. Let $\tau\defeq (a,\tau^0,c)$. It follows that 

$$
\seqm{d_{\tau}(g_m^{-1}(b),\tau^0)} \ ,
$$
is bounded. Since $d_\tau$ is a proper metric (\emph{i.e.}\ every bounded set is relatively compact),
the sequence $\seqm{g_m^{-1}(b)}$ --- after extracting a subsequence ---
converges to $e$ with $(a,e,c)$ positive. Since $\operatorname{Aut}_0(\mk g)$ acts properly on the
space of tripods, it follows that $\seq{g}$ is bounded. Thus after taking a
subsequence $\seq{\tau}$ converges to a tripod $\tau_\infty$,  with
$\tau^0_\infty$ in $V_t$.
This proves that, for all~$K_0$, the set $D(t,K_0)$ is compact.
Since $D(t,K_0)$ is non-empty for $K_0>\mathrm{K}(t)$, it follows that the
decreasing intersection
$$
D(t)=\bigcap_{K_0>\mathrm{ K}(t)} D(t, K_0)
$$
is not empty.

The second assertion is an immediate consequence of the first.

The third follows from the first as a  consequence of the
second part of Proposition~\ref{pro:dm-complete}. 
\end{proof}

\subsubsection{The diamond metric for triples}

The following definition is one of the goal of this section.

\begin{definition}\label{def:diammet}
  Let $t$ be a positive triple.   The {\em diamond metric} (for the triple~$t$) $g_t$ is the Riemannian metric
  on~$V_t$ defined as follows: for every~$x$ in~$V_t$, the unit ball
  of~$g_{t,x}$ is the John ellipsoid of the union of the unit balls
  of~$g_{\tau,x}$ for $\tau$ varying in~$D(t)$.

The associated distance is the {\em diamond metric} $d_t$.
\end{definition}

Explicitly, one has $g_{\tau,x}\leq g_{t,x}$ for every~$\tau$ in $D(t)$ and
$g_{t,x}\leq g$ for every Euclidean scalar product~$g$ on $T_x V_t$ such that
$g_{\tau,x}\leq g$ for every~$\tau$ in $D(t)$. Furthermore $g_{t,x}$ is the
unique minimizer among the Euclidean scalar products~$g$ satisfying the
previous condition. It also follows immediately from
point~(\ref{item3:pro:diameter}) of Proposition~\ref{pro:diameter} that
$g_t\leq A g_\tau$ for every~$\tau$ in $D(t)$ with $A=A(K(t))$.

When~$t$ is a tripod, this definition agrees with the one of the previous
paragraph thanks to the second item of Proposition~\ref{pro:diameter}.

As an immediate corollary of Proposition~\ref{pro:diameter} and Proposition~\ref{pro:dm-complete}, we have
\begin{coro}
  The diamond metric is complete. Moreover if a sequence of positive triples
  $\seq{t}$ converges to a tripod $\tau$, then $\seqm{g_{t_m}}$ converges to
  $g_\tau$ on every compact of the diamond $V_\tau$.
\end{coro}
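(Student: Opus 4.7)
The plan is to deduce the three assertions (finiteness, completeness, convergence) from the uniform comparability of tripod metrics on $D(t)$ supplied by item~(3) of Proposition~\ref{pro:diameter}, together with the continuity of the assignments $t\mapsto \mathrm{K}(t)$ and $\tau\mapsto g_\tau$.

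For finiteness and completeness, I would fix a positive triple $t$ and choose any $\tau_0\in D(t)$. Item~(3) of Proposition~\ref{pro:diameter} provides a constant $A=A(t)>0$ such that, for every $\tau\in D(t)$, one has $A^{-1}g_{\tau_0}\leq g_\tau\leq A\,g_{\tau_0}$ on $V_t=V_{\tau_0}$. Averaging these pointwise inequalities against the normalized $\G$-invariant volume on $D(t)$ preserves them, giving $A^{-1}g_{\tau_0}\leq g_t\leq A\,g_{\tau_0}$. The upper estimate yields finiteness; the lower estimate yields $d_t\geq A^{-1/2}\,d_{\tau_0}$, so every $d_t$-Cauchy sequence is $d_{\tau_0}$-Cauchy, and since $d_{\tau_0}$ is complete on $V_{\tau_0}=V_t$ by Proposition~\ref{pro:dm-complete}, so is $d_t$.

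For the convergence statement, suppose $\seq{t}$ converges to a tripod $\tau$. Continuity of $\mathrm{K}$ gives $\mathrm{K}(t_m)\to \mathrm{K}(\tau)=0$. The central step is to prove that $D(t_m)$ Hausdorff-converges to the singleton $\{\tau\}$ inside the tripod space $\mathcal T_0$: if $\sigma_m\in D(t_m)$, its extremities coincide with those of $t_m$ and therefore converge to those of $\tau$, so via $\G$-transitivity on tripods one may factor $\sigma_m$ by a sequence tending to the identity and reduce to fixed extremities; then the defining inequality $d_{\sigma_m}(b_m,\sigma_m^0)\leq \mathrm{K}(t_m)\to 0$ combined with the completeness of tripod metrics forces every accumulation point of $\{\sigma_m\}$ to equal $\tau$. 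Once this is in place, given a compact $K\subset V_\tau$ and $\epsilon>0$, the continuity of $\sigma\mapsto g_\sigma$ (uniform on $K$, by $\G$-equivariance and the continuity of the $\G$-action on $\mathcal T_0$) provides a neighborhood $U\ni\tau$ with $\sup_K|g_\sigma-g_\tau|<\epsilon$ for all $\sigma\in U$; for $m$ large enough $D(t_m)\subset U$, and averaging preserves this estimate, whence $\sup_K|g_{t_m}-g_\tau|<\epsilon$.

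I expect the main obstacle to be precisely the Hausdorff-convergence $D(t_m)\to\{\tau\}$. The defining condition $d_\sigma(b_m,\sigma^0)\leq \mathrm{K}(t_m)$ involves simultaneously a moving threshold and a moving center $b_m$, so controlling the two-sided Hausdorff closeness requires a careful compactness argument combining $\G$-equivariance, the completeness of each tripod metric (Proposition~\ref{pro:dm-complete}), and the fact that $\mathrm{K}$ is continuous and vanishes exactly on tripods (Proposition~\ref{pro:diameter}(2)).
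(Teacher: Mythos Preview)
Your proposal is correct and matches the paper's intended approach: the paper itself gives no proof beyond stating that the corollary is ``an immediate consequence of Proposition~\ref{pro:diameter} and Proposition~\ref{pro:dm-complete}'', and what you have written is precisely a careful unpacking of that sentence. The obstacle you flag---Hausdorff convergence $D(t_m)\to\{\tau\}$---is not a genuine difficulty: it follows from the compactness in Proposition~\ref{pro:diameter}(1) (applied with a uniform bound $K_0\geq \sup_m \mathrm{K}(t_m)$, which exists by continuity of~$\mathrm{K}$) together with $\mathrm{K}(t_m)\to 0$, exactly as you sketch.
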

The following
Propositions~\ref{pro:Acontract2} and~\ref{pro:contract2} are two contractions
properties of the diamond metrics that we shall use in the sequel.

\begin{proposition}[\sc Contraction]\label{pro:Acontract2}
Let $\seq{t}$ be a sequence of positive triples, with $t_m=(a_m,b_m,c_m)$. Assume that the sequence 
	$\seqm{\mathrm{ K}(t_m)}$ of tripod defects is  bounded.  Assume that $V_{t_{m+1}}\subset V_{t_m}$ and that
\begin{equation}
\bigcap_{m\in\mathbb N}V_{t_m}=\{z\}\ .\label{eq:hypT2}
\end{equation}
For any positive $R$, let $V_{t_m}(R)$ be the ball of radius $R$ and center $a_m$ with respect to $d_{t_m}$. Then on $V_{t_m}(R)$, we have 
$$
g_{t_0}\leq k_m \cdot g_{t_m}\ ,
$$
with  $\seq{k}$ converges to zero.
\end{proposition}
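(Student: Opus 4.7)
The plan is to reduce the statement to the tripod case already handled in Proposition~\ref{pro:Acontract} by approximating each positive triple $t_m$ by a tripod via the set $D(t_m)$. Since the sequence $\seqm{\mathrm{K}(t_m)}$ is bounded, set $K_0 \defeq \sup_m \mathrm{K}(t_m)$. For each $m$, item~(1) of Proposition~\ref{pro:diameter} tells us that $D(t_m)$ is non-empty (and compact), so we may choose a tripod $\tau_m \in D(t_m)$. The tripod~$\tau_m$ has the same extremities $a_m, c_m$ as $t_m$, and $\tau_m^0 \in V_{t_m}$, so $V_{\tau_m} = V_{t_m}$ as subsets of $\gp$. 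Consequently, the hypotheses $V_{t_{m+1}} \subset V_{t_m}$ and $\bigcap_m V_{t_m} = \{z\}$ translate verbatim into the corresponding nesting and singleton-intersection hypotheses for the tripod sequence $\seq{\tau}$.

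The next step is to exploit the uniform comparability provided by item~(3) of Proposition~\ref{pro:diameter}: there is a constant $A$, depending only on $K_0$, such that for every $m$ and every pair $\tau, \tau' \in D(t_m)$ one has $g_\tau \leq A\, g_{\tau'}$. Using this bound inside the averaging formula that defines $g_{t_m}$ yields
\[
A^{-1}\, g_{\tau_m} \ \leq \ g_{t_m} \ \leq \ A\, g_{\tau_m},
\]
and the analogous inequality holds for $m=0$ with $\tau_0 \in D(t_0)$. Applying Proposition~\ref{pro:Acontract} to the tripod sequence $\seq{\tau}$ produces a sequence $\seqm{\tilde k_m}$ tending to zero with $g_{\tau_0} \leq \tilde k_m\, g_{\tau_m}$ on the $d_{\tau_m}$-ball of any prescribed radius $R'$ around $\tau_m^0$. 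Chaining these comparisons gives
\[
g_{t_0} \ \leq \ A\, g_{\tau_0} \ \leq \ A\, \tilde k_m\, g_{\tau_m} \ \leq \ A^2\, \tilde k_m\, g_{t_m},
\]
so setting $k_m \defeq A^2\, \tilde k_m$ does the job.

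The step that requires the most care, and which I expect to be the main technical point, is checking that the ball $V_{t_m}(R)$ on which the conclusion must hold is contained in a $d_{\tau_m}$-ball around $\tau_m^0$ on which Proposition~\ref{pro:Acontract} gives the tripod estimate, with a radius $R'$ chosen uniformly in $m$. This follows from two uniform comparisons: on one hand the Riemannian comparison $g_{t_m} \geq A^{-1}\, g_{\tau_m}$ integrates into $d_{\tau_m} \leq \sqrt{A}\, d_{t_m}$, so any $d_{t_m}$-ball of radius $R$ sits in a $d_{\tau_m}$-ball of radius $\sqrt{A}\, R$ around the same center; on the other hand, the defining inequality $d_{\tau_m}(b_m, \tau_m^0) \leq \mathrm{K}(t_m) \leq K_0$ controls uniformly the $d_{\tau_m}$-distance between the two natural centers. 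Combining these, one may take $R' = \sqrt{A}\, R + 2K_0$ (adjusted to account for the reference point appearing in the statement), so that the tripod estimate is valid on $V_{t_m}(R)$ for all sufficiently large $m$, which finishes the argument.
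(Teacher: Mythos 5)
Your proposal is correct and follows essentially the same route as the paper: using the bounded tripod norm and Proposition~\ref{pro:diameter} to pick tripods $\tau_m\in D(t_m)$ with $d_{\tau_m}(b_m,\tau_m^0)$ uniformly bounded and $A^{-1}g_{\tau_m}\leq g_{t_m}\leq A\,g_{\tau_m}$, then transferring the nesting hypotheses (via $V_{\tau_m}=V_{t_m}$) and concluding from the tripod contraction Proposition~\ref{pro:Acontract}. The only difference is that you spell out the radius bookkeeping ($R'=\sqrt{A}\,R+2K_0$) which the paper leaves implicit.
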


\begin{proof} By Definition~\ref{def:diammet} of the diamond metric for triples, and
Proposition~\ref{pro:diameter} it follows that we can find a constant $A$,
such that for all $m$, we can find a tripod $\tau_m$ with the same extremities
as~$t_m $ and with $$
d_{\tau_m}(\tau^0_m,b_m)\leq A\ , \ \frac{1}{A} g_{\tau_m}\leq g_{t_m}\leq A g_{\tau_m}\ .
$$
The result now follows from the corresponding proposition for tripods: Proposition~\ref{pro:Acontract}.
\end{proof}

\begin{proposition}[\sc Contraction in corners]\label{pro:contract2}
  Let $\seq{t}$ be a sequence of positive triples, where
  $t_m=(t_{m}^{-},t_{m}^{0},t_{m}^{+})$. Assume that
  \begin{enumerate}
  \item the sequence $\seqm{\mathrm{K}(t_m)}$ of tripod defects is  bounded;
  \item the sequences $\seqm{t_{m}^{-}}$ and  $\seqm{t_{m}^{+}}$ converge to
    transverse points $a$ and $c$ respectively;
  \item There exists $\seq{u}$  a sequence of elements of $\gp$, such that  $u_m$ belongs
    to  $V_{t_m}$, the sequence $\seqm{d_{t_m}(t_{m}^{0},u_m)}$ uniformly
    bounded, and $\lim_{m\to\infty} u_m=a$.   
  \end{enumerate}
  Then $\lim_{m\to\infty}t_{m}^{0}=a$.
\end{proposition}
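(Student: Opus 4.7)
The plan is to deduce Proposition~\ref{pro:contract2} from its tripod analogue, Proposition~\ref{pro:dm-contract}, by replacing the positive triple $t_m$ by a nearby tripod $\tau_m\in D(t_m)$ with the same extremities and invoking the uniform metric equivalence from Proposition~\ref{pro:diameter}(3). First, since $\G$ acts transitively on pairs of transverse points and $(a_m,c_m)\to(a,c)$, I pick $g_m\in\G$ with $g_m\to\mathrm{id}$ and $g_m\cdot(a_m,c_m)=(a,c)$; all the relevant structures (diamonds, the tripod norm $\mathrm{K}$, the diamond metric $d_{t_m}$) are $\G$-equivariant, so replacing $(b_m,u_m)$ by $(g_m b_m,g_m u_m)$ preserves the hypotheses while the desired conclusion $b_m\to a$ is equivalent to $g_m b_m\to a$. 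I therefore assume $a_m=a$ and $c_m=c$ for all $m$.

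\emph{From triples to tripods.} The bound $\mathrm{K}(t_m)\leq K_0$ lets me choose tripods $\tau_m=(a,\tau_m^0,c)\in D(t_m)$, so $d_{\tau_m}(\tau_m^0,b_m)\leq K_0$. Proposition~\ref{pro:diameter}(3) provides a constant $A$ so that $g_\tau$ and $g_{\tau'}$ are $A$-comparable for $\tau,\tau'\in D(t_m,K_0)$, and averaging in the definition of $g_{t_m}$ then shows $g_{t_m}$ and $g_{\tau_m}$ are uniformly comparable. Consequently $d_{\tau_m}(b_m,u_m)$ and $d_{\tau_m}(\tau_m^0,u_m)$ are bounded independently of $m$. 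In the parameterization $\Psi_{\tau_m}\colon\mathsf{C}\to V_{\tau_m}$, the sequences $k_u^m\defeq\Psi_{\tau_m}^{-1}(u_m)$ and $k_b^m\defeq\Psi_{\tau_m}^{-1}(b_m)$ remain in a fixed compact region of $(\mathsf{C},d_0)$ and, up to extraction, converge in $\overline{\mathsf{C}}$.

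\emph{Applying the tripod contraction.} The tripods $\tau_m=(a,\tau_m^0,c)$ have fixed extremities, so Proposition~\ref{pro:dm-contract} applies with $x_m=a\to x_0=a$ and $y=c$. The convergent witness $(k_u^m)$ satisfies $\Psi_{\tau_m}(k_u^m)=u_m\to a$; the proposition then gives $\Psi_{\tau_m}(k'_m)\to a$ for every convergent sequence $(k'_m)$ in $\mathsf{C}$, and specializing to $k'_m=k_b^m$ yields $b_m\to a$. Since the argument applies along every subsequence, the full sequence converges.

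\emph{Main obstacle.} The delicate point is this final application: the proof of Proposition~\ref{pro:dm-contract} rests on a domination $R k_u^m-k_b^m\in\overline{\mathsf{C}}$, which is automatic when both limits lie in the open cone $\mathsf{C}$ but may fail if $k_u^\infty\in\partial\mathsf{C}$. I expect to circumvent this by a small perturbation of $u_m$ along the interior direction $h$, giving a modified witness with interior limit while still tending to $a$, or by running the symmetric argument in the opposite parameterization $\Psi_{\overline{\tau_m}}$; the boundedness of $d_{t_m}(b_m,u_m)$ controls both $d_{\tau_m}^+$ and $d_{\tau_m}^-$, providing two-sided access to the cone.
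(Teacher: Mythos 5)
Your route is essentially the paper's: approximate $t_m$ by tripods $\tau_m\in D(t_m)$ with the same extremities via Proposition~\ref{pro:diameter}, transfer the bounds $d_{t_m}(b_m,u_m)=O(1)$ and $d_{\tau_m}(b_m,\tau_m^0)\leq \mathrm{K}(t_m)$ to the tripod metric, and conclude with Proposition~\ref{pro:dm-contract}. The only structural difference is that the paper applies Proposition~\ref{pro:dm-contract} twice (first with target $k'_m=h$, deducing $\tau_m^0\to a$, then with witness $h$ and target the $b_m$-parameters), while you apply it once with witness the $u_m$-parameters and target the $b_m$-parameters; this difference is immaterial, and your preliminary normalization making the extremities constant is a reasonable supplement, since Proposition~\ref{pro:dm-contract} is stated with a fixed second extremity.

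The step you leave open --- the witness $k^m_u$ possibly converging to $\partial\ms C$ --- is where your write-up is incomplete, and your primary fix would not work: to justify replacing $u_m$ by a point with parameter $k^m_u+\epsilon h$ you would need to know that $\Psi_{\tau_m}(k^m_u+\epsilon h)\to a$, which is again a contraction statement with a possibly-boundary witness, so the argument is circular. The correct observation (which your closing remark about two-sided control gestures at, and which the paper also leaves implicit when it only records that the parameters are bounded) is that boundary limits simply do not occur here. Indeed $d_{\tau_m}\geq d^{\pm}_{\tau_m}$, and by the same two-parameterization argument as in the proof of Proposition~\ref{pro:dm-complete} the closed ball $\bar B_{d_{\tau_m}}(\tau_m^0,R)$ is a compact subset of the open diamond $V_{\tau_m}$: both coordinate images are $d_0$-bounded, and any limit point is transverse to both extremities, hence lies in $V_{\tau_m}$. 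Consequently $\Psi_{\tau_m}^{-1}\bigl(\bar B_{d_{\tau_m}}(\tau_m^0,R)\bigr)$ is a compact subset of the open cone $\ms C$, and by $\G$-equivariance of the construction together with the compactness of $\ms K_h$ this compact subset may be chosen independent of $m$. Since $d_{\tau_m}(u_m,\tau_m^0)$ and $d_{\tau_m}(b_m,\tau_m^0)$ are uniformly bounded, both $k^m_u$ and $k^m_b$ therefore subconverge to points of $\ms C$ itself; the hypothesis of Proposition~\ref{pro:dm-contract} is met, and your single application (run along subsequences to get convergence of the full sequence) completes the proof. With that paragraph added your argument is correct and coincides in substance with the paper's.
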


\begin{proof} By the first hypothesis and Proposition~\ref{pro:diameter}, we can find a constant~$A$, a  sequence of tripods $\seq{\tau}$ with $\tau_m^\pm=t_m^\pm$ and such that  
$$
d_{\tau_m}\leq A\  d_{t_m}\ .
$$
In particular, we have that $\seqm{d_{\tau_m}(t^{0}_{m},\tau^{0}_{m})}$ and
$\seqm{d_{\tau_m}(u_m,\tau^{0}_{m})}$ are uniformly bounded. The result now
follows by applying twice
Proposition~\ref{pro:dm-contract}. Indeed, since
$\seqm{d_{\tau_m}(t^{0}_{m},\tau^{0}_{m})}$ is  uniformly bounded, it follows that
$t^{0}_{m} = \Psi_{\tau_m}(k_m)$ with $\seq{k}$ bounded. Hence by
Proposition~\ref{pro:dm-contract} (applied to any converging subsequence of $\seq{k}$), with $k'_m=h$, yields that 
$$
\lim_{m\to\infty}\tau^{0}_{m}=a\ .
$$
Applying again Proposition~\ref{pro:dm-contract} to $\seq{k'}$ with 
$\Psi_{\tau_m}(k'_m)=u_m$ yields that 
 $$
 \lim_{m\to\infty} u_m=a\ .
 $$
This concludes the proof.\end{proof}

\section{Positive representations are Anosov}\label{sec:posano}

In this section we introduce the notion of positive representations of a
surface group. We then show that any $\Theta$-positive representation is
$\Theta$-Anosov, establishing Theorem~\ref{thm_intro:Anosov} and
Corollary~\ref{cor_intro:open} from the introduction. As in the introduction, $S$~is a connected
oriented closed surface of genus at least~$2$.

\begin{definition}[\sc Positive representations]
Let~$\G$ be a semi-simple Lie group admitting a positive
structure relative to~$\Theta$. A
representation $\rho\colon \pi_1(S) \to \G$ is {\em $\Theta$-positive} if
there exist a non-empty $\grf$-invariant subset~$A$ of $\bgrf$ and a positive
$\rho$-equivariant map~$\xi$ from~$A$ to~$\gp$.

\end{definition}

The set~$A$ is necessarily dense since the action of $\grf$ on $\bgrf$ is minimal.
We will often say that a representation is positive if it is $\Theta$-positive.

\subsection{Anosov representations}
\label{sec:anos-repr}

Let us recall at this stage the definition of a $\Theta$-Anosov
representation from \cite{Labourie:2006}.  For simplicity we restrict ourselves to the
case 
of representations of~$\pi_1(S)$. Let us equip the surface~$S$  with an auxiliary
hyperbolic metric. Let ${\mathsf U}S$ be the unit tangent bundle of~$S$
equipped with its geodesic flow~$\phi_t$. Let us also freely  identify the
space of cyclically oriented triples of $\bgrf$ with the
unit tangent bundle $\mathsf U{\mathbf H}^2$ of the universal cover of~$S$.

Let~$\rho$ be a representation of
$\Gamma\defeq\pi_1(S)$ in~$\G$. Let ${\mathcal F}_\Theta$ the flat
$\gp$-bundle over ${\mathsf U}S$ associated with $\rho$, and $\Phi_t$ the  parallel transport on ${\mathcal F}_\Theta$ along~$\phi_t$.

The representation $\rho$ is {\em $\Theta$-Anosov} if there exists a  $\rho$-equivariant continuous map, called the {\em limit map}.
\begin{align*}
\xi\colon \partial_\infty\Gamma&\to \gp\ ,
\end{align*}
such that the corresponding section~$\Xi$ of 
${\mathcal F}_{\Theta}$ (which is constant along the leaves of the weakly unstable foliation) satisfies the following {\em contraction property}:
there exist an open neighborhood~$\mathcal V$ of the image of~$\Xi$ that is a
fiber bundle over ${\mathsf U}S$ with fiber $\mathcal{V}_x$ for~$x$ in
${\mathsf U}S$, a
continuous family of Riemannian metric $g_x$ on the fibers~$\mathcal{V}_x$, and some positive number
$T$ such that
$\Phi_{-T}(\mathcal V) \subset \mathcal V$, and, for all $x$ in $\mathcal V$, 
\begin{equation*}
  (\Phi_T)^* g_x\leq \frac{1}{2} g_{\phi_T(x)}\ ,
\end{equation*}
where~$g_x$ is $g$~restricted to~$\mathcal V_x$. 

\vskip 0.2 truecm

Note that there is another section $\Xi^*$, which is constant along the leaves
of the weakly stable foliation and is contracted under  $\Phi_{-T}$. 
	
Observe that, in general, the existence of a continuous equivariant map, even sending
distinct points to transverse points, is weaker than the condition of being
Anosov.

To establish the Anosov property for positive representations, we first extend the positive boundary map to
a left-continuous boundary map and to a right-continuous boundary map using
Corollary~\ref{coro:posmap}. We prove then that these extensions are
continuous (and thus coincide), and then deduce the Anosov property using the contraction property of the diamond metrics (Proposition~\ref{pro:Acontract2}).

\subsection{Properness}
The following definition will be used several times in the sequel: an application $f$ defined on a subset $A$ of a topological set $X$, with values in some topological set $Y$  is {\em bounded} if for every compact set $K$ in $X$, $f(A\cap K)$ is relatively compact.

\begin{lemma}\label{lem:UnifBd}
Let $A$ be a dense set in the circle. Let $\phi_\pm$ be  positive maps from $A$ to
$\gp$. We assume that, for all cyclically oriented quadruple $(x,y,z,t)$ in~$A$ and for any
choice of $\varepsilon$, $\eta$, $\nu$, and $\beta$ in $\{+,-\}$, the quadruple 
$$(\phi_\varepsilon(x),\phi_\eta(y),\phi_\nu(z),\phi_\beta(t))$$
is a positive. Let $A^3_+$ be  the set of  triples of  pairwise distinct elements of $A$.  

Then, for any
$\varepsilon$, $\eta$, and $\nu$ in $\{+,-\}$, $\phi_\varepsilon\times
\phi_\eta\times \phi_\nu$ is
bounded as a map from $A^3_+$ to the space~$\mathcal T$ of positive triples in~$\gp$.
\end{lemma}

\begin{proof} Let  $\chi=(x_1,x_2,y_1,y_2,z_1,z_2)$ be a cyclically oriented sextuplet in~$S^1$. Let $I_\chi$  be the subset of  $(S^1)^3$ given  by
\[
I_\chi=\{(X,Y,Z)\mid x_1<X<x_2<y_1<Y<y_2<z_1<Z<z_2\}\ .
\]
Observe that $I_\chi$ consists of cyclically oriented triples. 
Let 
\[
K = \phi_\epsilon \times \phi_\eta \times \phi_\nu \left(I_\chi\cap A_+^3\right).
\]
It is enough to show that $\overline K\subset\mathcal T$, where the closure is taken in $\gp^3$.

Let us fix, by density, $a_0$, $a_1$, $a_2$, $b_0$, $b_1$, $b_2$, $c_0$,
$c_1$, and $c_2$ in~$A$ such that 
\[
  (x_1,x_2, a_0,a_1,a_2, y_1,y_2, b_0,b_1,b_2, z_1,z_2,c_0,c_1,c_2)
\]
is cyclically oriented. To lighten notation, we set $\alpha_i=\phi_\epsilon(a_i)$,
$\beta_i=\phi_\eta(b_i)$, and $\gamma_i=\phi_\nu(c_i)$ for $i=0,1,2$.

From the positivity of the maps and thus of the image of the $15$-tuple
defined above, it follows that if $(x,y,z)$ belongs to $K$, then
$$
x\in V^*_{\alpha_1}(\gamma_2,\alpha_0)\ , \ y\in V^*_{\beta_1}(\alpha_2,\beta_0)\ , \ z\in V^*_{\gamma_1}(\beta_2,\gamma_0)\ . 
$$
Thus if $(a,b,c)$ belongs to $\overline K$, then 
\[
a\in \overline{V}{}^{*}_{\alpha_1}(\gamma_2,\alpha_0)\ , \ b\in \overline{V}{}^{*}_{\beta_1}(\alpha_2,\beta_0)\ , \ c\in \overline{V}{}^{*}_{\gamma_1}(\beta_2,\gamma_0)\ . 
\]
Using the inclusion Corollary~\ref{coro:inclus}, we get
\[
a\in V^*_{\alpha_1}(\gamma_1,\alpha_1)\ , \ b\in V^*_{\beta_1}(\alpha_1,\beta_1)\ , \ c\in V^*_{\gamma_1}(\beta_1,\gamma_1)\ . 
\]
By the necklace Corollary~\ref{coro:necklace}, $(a,b,c)$ is a positive
triple, \emph{i.e.}\ it belongs to $\mathcal T$.
This concludes the proof.
\end{proof}

\begin{proposition}\label{pro:triple-bounded}
	Let $\rho\colon \pi_1(S) \to \G$. Let $\xi$ be a positive $\grf$-invariant  map from $\bgrf$ to $\gp$. 
	
	Let $\mathcal T_{\grf}$ be the set of triples of  pairwise distinct
        elements in  $\bgrf$, and let~$\mathcal T$ be the set of positive triples in $\gp$. Let  $\Xi$ be the map from  $\mathcal T_{\grf}$ to $\mathcal T/\G$, defined by 
$$
	\Xi(x,y,z)\defeq [\xi(x),\xi(y),\xi(z)]\ .
	$$
Then the image of $\Xi$ is relatively compact.
\end{proposition}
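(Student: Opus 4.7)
The plan is to combine the cocompactness of the $\pi_1(S)$-action on the space $\mathcal T_{\grf}$ of pairwise distinct ordered triples in the Gromov boundary with the local boundedness result already established in Lemma~\ref{lem:UnifBd}.

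First I would record that, since $\pi_1(S)$ is a non-elementary Gromov hyperbolic group (the fundamental group of a closed surface of genus at least~$2$), its convergence action on $\bgrf$ induces a properly discontinuous and cocompact action on $\mathcal T_{\grf}$. I would fix a compact set $K \subset \mathcal T_{\grf}$ with $\pi_1(S) \cdot K = \mathcal T_{\grf}$.

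Next I would observe that the map $\Xi_+$ is $\pi_1(S)$-invariant. Indeed, for any $\gamma \in \pi_1(S)$ and any $(x,y,z) \in \mathcal T_{\grf}$, the $\rho$-equivariance of $\xi_+$ gives
\[
\Xi_+(\gamma x,\gamma y,\gamma z) = [\rho(\gamma)\xi_+(x),\rho(\gamma)\xi_+(y),\rho(\gamma)\xi_+(z)] = [\xi_+(x),\xi_+(y),\xi_+(z)] = \Xi_+(x,y,z),
\]
so $\Xi_+(\mathcal T_{\grf}) = \Xi_+(K)$. (Note also that positivity of a triple is permutation-invariant by Proposition~\ref{pro:basic}, so the target $\mathcal T/\G$ is indeed reached on every pairwise distinct triple.)

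Then I would apply Lemma~\ref{lem:UnifBd} with $A = \bgrf$ (which is trivially dense in the circle $\bgrf$) and $\phi = \xi_+$, which is positive by hypothesis. The lemma gives that $\xi_+^3$ maps $\mathcal T_{\grf} \cap K'$ into a relatively compact subset of $\mathcal T$ for every compact $K' \subset (\bgrf)^3$; in particular $\xi_+^3(K) \subset \mathcal T$ is relatively compact. Composing with the continuous quotient map $\mathcal T \to \mathcal T/\G$ then yields that $\Xi_+(K)$ is relatively compact in $\mathcal T/\G$, and by the $\pi_1(S)$-invariance above this equals $\Xi_+(\mathcal T_{\grf})$, as desired.

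The proof is essentially a packaging step: the hard work (controlling triples under a positive boundary map by the diamond-inclusion machinery of Section~\ref{sec:property}) is already carried out in Lemma~\ref{lem:UnifBd}, and cocompactness of the surface group action on triples of distinct boundary points is classical. The only mild point to verify carefully is that the positivity hypothesis on $\xi_+$ in the statement of the proposition is strong enough to invoke Lemma~\ref{lem:UnifBd} directly (rather than via its left/right continuous extensions from Corollary~\ref{coro:posmap}), which it is, since the lemma applies to any positive map from a dense subset of the circle.
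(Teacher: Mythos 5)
Your proposal is correct and follows essentially the same route as the paper: the paper's proof consists exactly of noting the $\grf$-invariance of $\Xi_+$ (via the $\rho$-equivariance of $\xi_+$) and combining Lemma~\ref{lem:UnifBd} with the cocompactness of the $\grf$-action on $\mathcal T_{\grf}$. Your write-up merely spells out the same two ingredients in more detail.
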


\begin{proof}
The map $\Xi$ is invariant by the diagonal action of $\grf$. The result follows then from Lemma~\ref{lem:UnifBd} using the fact that $\grf$ acts cocompactly on $\mathcal T_{\grf}$. \end{proof}

\subsection{An \emph{a priori} bound on the tripod defect}

For any positive triple~$t$, let ${\rm K}(t)$ be the tripod defect 
introduced in paragraph~\ref{par:met-trip}. 
Then Proposition~\ref{pro:triple-bounded} implies an a priori bound on the tripod defect. 

\begin{proposition}\label{pro:a-priori}
	Let $\rho$ from $\pi_1(S)$ to $\G$. Let $\xi$ be a $\rho$-equivariant  positive map from a $\grf$-invariant dense subset~$A$ of $\bgrf$ to $\gp$. Then there exists a constant $K_0$ such that for all  triple~$t$ of pairwise distinct points in the closure of  $\xi(A)$, we have 
	$$
	{\rm K}(t)\leq K_0 \ .
	$$
\end{proposition}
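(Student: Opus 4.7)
The plan is to reduce the a priori bound to Proposition~\ref{pro:triple-bounded} (itself a consequence of Lemma~\ref{lem:UnifBd}) combined with continuity and $\G$-invariance of the tripod norm. The map $\mathrm{K}\colon\mathcal T\to\mathbb R_{\geq 0}$ is continuous (as an infimum of $\G$-equivariant complete diamond distances) and $\G$-invariant, so it descends to a continuous function on $\mathcal T/\G$ and is therefore bounded on every relatively compact subset.

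First I would extend $\xi$ by Corollary~\ref{coro:posmap} to positive left- and right-continuous maps $\xi_\pm\colon\bgrf\to\gp$. Density of $A$ forces $\overline{\xi(A)}=\xi_+(\bgrf)\cup\xi_-(\bgrf)$, so every pairwise distinct triple $t=(p_1,p_2,p_3)$ in this closure has the form $(\xi_{\epsilon_1}(x_1),\xi_{\epsilon_2}(x_2),\xi_{\epsilon_3}(x_3))$ for signs $\epsilon_i\in\{+,-\}$ and boundary points $x_i\in\bgrf$. By density, I would pick approximating sequences $a_n^i\in A$ with $a_n^i\to x_i$ from the side $\epsilon_i$; because the limit triple is pairwise distinct, the $a_n^i$ are pairwise distinct for $n$ large, so the triples $t_n\defeq(\xi(a_n^1),\xi(a_n^2),\xi(a_n^3))$ are positive and $t_n\to t$ in $\gp^3$.

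Next I would invoke the $\grf$-cocompactness of the space $\mathcal T_\grf$ of pairwise distinct triples in $\bgrf$ to produce $\gamma_n\in\grf$ so that $\gamma_n\cdot(a_n^1,a_n^2,a_n^3)$ lies in a fixed compact fundamental domain $F\subset\mathcal T_\grf$. Lemma~\ref{lem:UnifBd} then places $\xi^3(A^3_+\cap F)$ inside a compact subset $C\subset\mathcal T$, so up to extraction $\rho(\gamma_n)\cdot t_n\to q\in C$, and by $\G$-invariance of the tripod norm $\mathrm K(t_n)=\mathrm K(\rho(\gamma_n)\cdot t_n)\leq\sup_C\mathrm K\eqdef A$. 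To transfer the bound to the limit, I would pick tripod approximants $\tau_n\in D(t_n)$ satisfying $\tau_n^\pm=t_n^\pm$ and $d_{\tau_n}(t_n^0,\tau_n^0)\leq A$; since $t_n^\pm\to t^\pm$ are pairwise distinct and the stabilizer of a tripod in $\G$ is compact, a subsequence satisfies $\tau_n\to\tau$ with $\tau^\pm=t^\pm$, and the completeness of the diamond metric (Proposition~\ref{pro:dm-complete}) forces $t^0=\lim t_n^0$ to lie in the closed $d_\tau$-ball of radius $A$ about $\tau^0$, which sits inside $V_\tau$. Thus $t$ is a positive triple and $\mathrm K(t)\leq d_\tau(t^0,\tau^0)\leq A$.

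The main obstacle is this last step: passing from the uniform bound along a sequence of positive triples to positivity of the limit triple together with the same bound on its tripod norm. Completeness of the diamond metric (Proposition~\ref{pro:dm-complete}) is exactly what prevents the limit middle point from escaping the diamond of the limit tripod and ensures the argument closes; everything else is a direct application of the compactness results already in place.
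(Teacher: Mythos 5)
Your argument up to the uniform bound $\mathrm K(t_n)\leq A$ is essentially the paper's own proof: combining Lemma~\ref{lem:UnifBd} with the cocompactness of $\grf$ on $\mathcal T_{\grf}$ is exactly the content of Proposition~\ref{pro:triple-bounded}, and the paper concludes from that proposition together with the continuity and $\G$-invariance of $\mathrm K$. (A small inaccuracy along the way: $\overline{\xi(A)}$ need not equal $\xi_+(\bgrf)\cup\xi_-(\bgrf)$, since at a point $x\in A$ where $\xi$ jumps the value $\xi(x)$ may differ from both one-sided limits; this is harmless, because all you actually use is that each coordinate of $t$ is a limit of values $\xi(a_n^i)$ with $a_n^i\in A$.)

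The genuine gap is in the final transfer of the bound from $t_n$ to $t$. You extract $\tau_n\to\tau$ with $\tau^\pm=t^\pm$, justified by ``$t^\pm$ are pairwise distinct and the stabilizer of a tripod is compact''. Neither reason gives precompactness of $\{\tau_n\}$ in $\mathcal T_0$: a tripod with extremities $t^\pm$ exists only if $t^+$ and $t^-$ are \emph{transverse}, and at this stage you only know they are distinct; and even granting transversality, nothing you have established prevents the middle points $\tau_n^0$ from leaving every compact set. This is exactly the degenerate situation your renormalization was introduced for: if two coordinates of $t$ arise as left and right limits of $\xi$ at the same boundary point, the parameter triples $(a_n^1,a_n^2,a_n^3)$ leave every compact subset of $\mathcal T_{\grf}$, the $\gamma_n$ are unbounded, and after your step with $\rho(\gamma_n)t_n\to q$ you no longer control $t_n$ itself inside a fixed compact subset of $\mathcal T$; so positivity of $t$, transversality of $t^\pm$, and convergence of the $\tau_n$ are all unproved, and completeness of the diamond metric cannot be invoked before the limit tripod $\tau$ is known to exist. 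The paper's (terse) proof stays at the level of the relatively compact image of $\Xi_+$ in $\mathcal T/\G$, and the triples to which the proposition is applied later are of the form $(\xi_{\pm}(x_1),\xi_{\pm}(x_2),\xi_{\pm}(x_3))$ with $x_1,x_2,x_3$ pairwise distinct, whose positivity is already supplied by Corollary~\ref{coro:posmap}; in that situation the approximating parameter triples converge in $\mathcal T_{\grf}$, so after a single renormalization they stay in a fixed compact set, Lemma~\ref{lem:UnifBd} places $t_n$ \emph{and} the limit $t$ in a fixed compact subset of $\mathcal T$, and continuity of $\mathrm K$ there gives $\mathrm K(t)\leq A$ directly, with no tripod extraction needed. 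Reorganizing your last step along these lines would close the argument; as written, it does not.
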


\begin{proof}
  This is an immediate consequence of Proposition~\ref{pro:triple-bounded} and
  the fact that ${\rm K}$ is a continuous function on $\mathcal T$.
\end{proof}

\subsection{Continuity of equivariant positive maps}

Let $\rho$ be a $\Theta$-positive representation, $A$ a non-empty
$\pi_1(S)$-invariant subset of $\bgrf$ and $\xi\colon A \to \gp$ the positive
$\rho$-equivariant boundary map. Then, by Corollary~\ref{coro:posmap},  there
exist a unique right-continuous $\rho$-equivariant boundary map $\xi_+\colon
\bgrf \to \gp$ and a unique left-continuous $\rho$-equivariant boundary map
$\xi_-\colon \bgrf \to \gp$, coinciding with the map~$\xi$ on dense subset.

Let $\mathcal T_{\grf}$ be the set of  triples of pairwise distinct points of $\bgrf$. For $t=(x,y,z)$ in  $\mathcal T_{\grf}$, let us define 
$$
\tau(t)=(\xi_+(x),\xi_+(y),\xi_+(z))\ .
$$

\begin{lemma}\label{lem:fbounded}
	The $\grf$-invariant function $f$ defined by
$$
f(x,y,z)=d_{\tau(t)}(\xi_+(y),\xi_-(y))
$$
is bounded: there is a constant~$D$ such that, for all $(x,y,z)$ in
$\mathcal{T}_{\grf}$, $f(x,y,z)\leq D$.
\end{lemma}
\begin{proof}
Let $\mathcal{Q}$ be the set of quadruples $(a,b,c,d)$ in $\gp$ such
  that there exists a diamond $V$ with extremities~$a$ and~$d$ and containing
  both~$b$ and~$c$.   Using Lemma~\ref{lem:UnifBd}, we see that the map $$(x,y,z)\mapsto (\xi_+(x),
  \xi_+(y),\xi_-(y), \xi_+(z))\ ,$$ from $\mathcal{T}_{\grf}$ to 
  $\mathcal{Q}$  is bounded. As the real valued function on $\mathcal{Q}$  sending a quadruple $(a,b,c,d)$ to 
   $d_{(a,b,d)}(b,c)$ is continuous, we get the result.\end{proof}

\begin{lemma}
  	The map $\xi_+$ is continuous.
\end{lemma}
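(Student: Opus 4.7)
The plan is to establish $\xi_+ = \xi_-$ pointwise on $\bgrf$; since $\xi_+$ is right-continuous by construction and $\xi_-$ is left-continuous, the equality immediately yields left-continuity (in fact full continuity) of $\xi_+$. For a sequence $y_m \nearrow y_0$ we already know from Corollary~\ref{coro:posmap} that $\xi_+(y_m) \to \xi_-(y_0)$, so the entire content of the lemma is the identification $\xi_-(y_0) = \xi_+(y_0)$.

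Fix $y_0 \in \bgrf$ and pick sequences $a_m \nearrow y_0$ and $c_m \searrow y_0$ in a dense subset of $\bgrf$ on which $\xi_+ = \xi_- = \xi$. I form the positive triples (positivity by Corollary~\ref{coro:posmap})
\[
t_m \defeq (\xi(a_m), \xi_+(y_0), \xi(c_m)),
\]
with diamonds $V_{t_m} = V_{\xi_+(y_0)}(\xi(a_m), \xi(c_m))$. Positivity of the cyclically ordered configurations $(a_m, a_{m+1}, y_0, c_{m+1}, c_m)$ combined with the Inclusion Corollary~\ref{coro:inclus} yields the strict nesting $\overline{V_{t_{m+1}}} \subset V_{t_m}$, and a straightforward limit argument using that the extremities $\xi(a_m), \xi(c_m)$ pinch around $\xi_\pm(y_0)$ gives $\bigcap_m \overline{V_{t_m}} = \{\xi_+(y_0)\}$.

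The crucial geometric input is that both $\xi_+(y_0)$ and $\xi_-(y_0)$ lie in the \emph{open} diamond $V_{t_m}$ for every $m$. Containment of $\xi_+(y_0)$ is built into the definition of $t_m$. For $\xi_-(y_0)$, I choose an auxiliary $c'$ in the dense subset with $y_0 < c' < c_m$ and apply the Bounded Property (Proposition~\ref{pro:bounded}) to the positive quadruples $(\xi(a_m), \xi(a_n), \xi(c'), \xi(c_m))$ with $a_n \nearrow y_0$; the proposition forces the limit $\xi_-(y_0) = \lim_n \xi(a_n)$ to sit inside the open diamond $V_{\xi(c')}(\xi(a_m), \xi(c_m))$, which equals $V_{t_m}$ because $\xi(c')$ and $\xi_+(y_0)$ fall in the same connected component of $\Omega_{\xi(a_m)} \cap \Omega_{\xi(c_m)}$.

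With this in hand, Proposition~\ref{pro:a-priori} makes $\mathrm{K}(t_m)$ uniformly bounded, and Lemma~\ref{lem:fbounded} delivers a uniform bound on $d_{t_m}(\xi_+(y_0), \xi_-(y_0))$, so both points lie in a common $d_{t_m}$-ball of fixed radius. All the hypotheses of the Contraction Proposition~\ref{pro:Acontract2} are met, producing $g_{t_0} \leq k_m g_{t_m}$ on that ball with $k_m \to 0$. Hence $d_{t_0}(\xi_+(y_0), \xi_-(y_0)) \to 0$, forcing $\xi_+(y_0) = \xi_-(y_0)$ and finishing the argument. The hardest step is the previous paragraph: ensuring that $\xi_-(y_0)$ lies strictly inside $V_{t_m}$ (transverse to both extremities) rather than merely in $\overline{V_{t_m}}$, without which neither Lemma~\ref{lem:fbounded} nor the contraction statement applies—this is precisely the role played by the Bounded Property.
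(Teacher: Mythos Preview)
Your argument has a circularity at exactly the step you flagged as ``straightforward''. You invoke Proposition~\ref{pro:Acontract2}, whose key hypothesis is that $\bigcap_m V_{t_m}$ is a single point. But you yourself establish, in the paragraph on the ``crucial geometric input'', that $\xi_-(y_0)\in V_{t_m}$ for every~$m$; and $\xi_+(y_0)\in V_{t_m}$ by construction. Hence the intersection contains both $\xi_+(y_0)$ and $\xi_-(y_0)$, and it is a singleton \emph{if and only if} $\xi_+(y_0)=\xi_-(y_0)$, which is precisely the conclusion you are after. The ``pinching'' heuristic does not help: the extremities $\xi(a_m)$ and $\xi(c_m)$ converge to $\xi_-(y_0)$ and $\xi_+(y_0)$ respectively, and if these differ the nested diamonds shrink to a set that is at least as large as a closed diamond with those two extremities (think of the $\mathbb{RP}^1$ picture: nested arcs whose endpoints converge to two distinct points have a nondegenerate arc as intersection). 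So the hypothesis~\eqref{eq:hypT2} of Proposition~\ref{pro:Acontract2} is unverified, and the final contraction step collapses.

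The paper avoids this trap by using the other contraction statement, Proposition~\ref{pro:contract2} (Contraction in corners), whose hypotheses do not involve the singleton condition. There one fixes an auxiliary point~$y$, takes $t_m=(\xi_+(x_m),\xi_+(x),\xi_+(y))$ with $x_m\to x$ from the left, sets $u_m=\xi_-(x)$, and checks: (i)~the tripod norms are bounded (Proposition~\ref{pro:a-priori}); (ii)~$a_m=\xi_+(x_m)\to \xi_-(x)$ and $c_m=\xi_+(y)$ are transverse (Corollary~\ref{coro:posmap}); (iii)~$d_{t_m}(\xi_+(x),\xi_-(x))$ is bounded (Lemma~\ref{lem:fbounded}) and $u_m\to a=\xi_-(x)$ trivially. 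The conclusion $b_m\to a$ then reads $\xi_+(x)=\xi_-(x)$. The point is that Proposition~\ref{pro:contract2} only asks one extremity of the triple to collide with the auxiliary bounded sequence, a condition that is genuinely available here, whereas Proposition~\ref{pro:Acontract2} asks for a collapse that already encodes the result.
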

\begin{proof}
  Since $\xi_+$ is right-continuous we only have to prove that it is left-continuous.
 Let $x$ and~$y$ be in $\bgrf$, and let $\seq{x}$ be a sequence in 
 $\bgrf$,  such that  $(x_m,x,y)$ is cyclically oriented with respect to the orientation on $\bgrf$, and that  $\seq{x}$ converges to $x$. Let $t_m=(\xi_+(x_m),\xi_+(x),\xi_+(y))$. 

Recall that by Corollary~\ref{coro:posmap}, $\seqm{\xi_+(x_m)}$ converges to $\xi_-(x)$.  

We now apply  Proposition~\ref{pro:contract2}  to the following setting:
\begin{equation*}
t_{m}^{-}=\xi_+(x_m)\ , \ t_{m}^{0}=\xi_+(x)	\ , \ u_m=\xi_-(x)\ , \ t_{m}^{+}=\xi_+(y)\ .
\end{equation*}
Since  $$
\seqm{d_{t_m}(t_{m}^{0},u_m)}=
\seqm{d_{t_m}(\xi_+(x),\xi_-(x))}$$ is bounded by Lemma~\ref{lem:fbounded} and $\seqm{{\rm K}(t_m)}$ is bounded by Proposition~\ref{pro:a-priori}, we get that 
$$
\lim_{m\to\infty}\xi_+(x_m)=\xi_+(x)\ .
$$
This proves that $\xi_+$ is left-continuous.
 \end{proof}
  
  As a consequence we obtain 

\begin{proposition}
\label{pro:continuous}
Let $\rho$ from $\grf$ to $\G$ be a positive representation and $\xi$ the positive $\rho$-invariant boundary map from a $\grf$-invariant dense subset of $\bgrf$ to $\gp$. Then $\xi$ extends to a $\rho$-equivariant positive continuous map from $\bgrf$ to $\gp$.
\end{proposition}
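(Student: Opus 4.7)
The plan is to assemble the continuity of the extended boundary map from the one-sided continuity results already established, and then to identify $\xi_+$ and $\xi_-$ on all of $\bgrf$.

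First, recall that Corollary~\ref{coro:posmap} furnishes a right-continuous positive extension $\xi_+$ and a left-continuous positive extension $\xi_-$, both $\rho$-equivariant, which agree with $\xi$ on dense subsets of $A$. The immediately preceding lemma shows that $\xi_+$ is \emph{also} left-continuous, hence $\xi_+$ is continuous on $\bgrf$. I would next run the mirror argument for $\xi_-$: by reversing the cyclic orientation of $\bgrf$, the roles of $\xi_+$ and $\xi_-$ are interchanged, and the constituent ingredients used in the proof of left-continuity of $\xi_+$ --- namely the boundedness of the tripod norm (Proposition~\ref{pro:a-priori}), the boundedness of the function $f$ (Lemma~\ref{lem:fbounded}), and the contraction in corners (Proposition~\ref{pro:contract2}) --- are all invariant under this symmetry, since positivity of triples and the diamond metric are preserved by reversing orientation. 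Thus $\xi_-$ is right-continuous as well, and therefore continuous.

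Next I would show $\xi_+=\xi_-$ on all of $\bgrf$. By Corollary~\ref{coro:posmap}, both maps coincide with $\xi$ on a dense subset of $A$, and a fortiori coincide with each other on a dense subset of $\bgrf$. Since both are continuous on the compact metrizable space $\bgrf$, the equality extends to all of $\bgrf$. Let $\tilde\xi$ denote this common map; it is continuous, $\rho$-equivariant, and extends $\xi$ (on the dense subset where $\xi$, $\xi_+$, $\xi_-$ all agree; the extension to all of $A$ follows from $\rho$-equivariance and density of the orbit of the agreement set).

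Finally, for positivity of $\tilde\xi$, I would invoke the second bullet of Corollary~\ref{coro:posmap}: for every triple of pairwise distinct points $(x,y,z)$ in $\bgrf$ and every choice of $a,b,c\in\{+,-\}$, the triple $(\xi_a(x),\xi_b(y),\xi_c(z))$ is positive. Since $\xi_+=\xi_-=\tilde\xi$, positivity of every triple $(\tilde\xi(x),\tilde\xi(y),\tilde\xi(z))$ is immediate, and positivity on quadruples follows by the same Corollary together with the fact (established in Section~\ref{sec:property}) that positivity of a configuration is determined by positivity of its subtriples and subquadruples. The main conceptual point --- and the only place where a real argument is needed --- is the symmetric left/right continuity argument for $\xi_-$; everything else is formal once one observes that two continuous maps agreeing on a dense set must coincide.
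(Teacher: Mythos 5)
Your first paragraph is exactly the paper's proof: $\xi_+$ is right-continuous by construction (Corollary~\ref{coro:posmap}) and left-continuous by the lemma immediately preceding the proposition, hence continuous, and it is $\rho$-equivariant and positive by that same corollary; this already yields the statement, and the paper does nothing more.

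The extra steps you add are not needed, and two of their justifications are faulty. (i) From the fact that $\xi_+$ agrees with $\xi$ on a dense subset of $A$ and $\xi_-$ agrees with $\xi$ on a (possibly different) dense subset of $A$, you cannot conclude ``a fortiori'' that $\xi_+$ and $\xi_-$ agree on a dense set: two dense subsets may be disjoint. The identification $\xi_+=\xi_-$ is instead immediate from the last assertion of Corollary~\ref{coro:posmap}: for $x_m<y$ with $x_m\to y$ one has $\xi_+(x_m)\to\xi_-(y)$, while left-continuity of $\xi_+$ gives $\xi_+(x_m)\to\xi_+(y)$, so $\xi_+(y)=\xi_-(y)$ for every $y$; in particular your orientation-reversal argument for the continuity of $\xi_-$, though correct, is superfluous. (ii) The parenthetical claim that agreement with $\xi$ on all of $A$ ``follows from $\rho$-equivariance and density of the orbit of the agreement set'' is not an argument: equivariance only shows that the disagreement set is $\grf$-invariant, and a dense invariant agreement set does not force agreement at the remaining points of $A$, since $\xi$ is not known to be continuous there. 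If you want literal agreement at every point of $A$, you need a genuine pinching argument (trap $\xi(y)$ in diamonds with extremities $\xi(x_m),\xi(z_m)$, where $x_m\nearrow y$ and $z_m\searrow y$ run through the agreement set, and use the contraction results of Section~\ref{sec:metric} to shrink them to the point $\xi_+(y)$); note that the paper itself only asserts, and only ever uses, agreement on a dense subset of $A$.
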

The extended map~$\xi$ from $\bgrf$ to~$\gp$ will be called
{\em the positive boundary map} of~$\rho$.

\subsection{The Anosov property}
We are now in position to prove Theorem~\ref{thm_intro:Anosov} from the introduction. More precisely we show 
\begin{proposition}\label{prop:Anosov}
 
Let $\rho$ from $\grf$ to  $\G$ be a positive representation and $\xi$ from $\bgrf$ to $\gp$ be the $\rho$-equivariant continuous positive boundary map. Then $\rho$ is $\Theta$-Anosov and its boundary map is $\xi$.
\end{proposition}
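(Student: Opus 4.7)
The plan is to verify the Anosov property via the characterization for representations of hyperbolic groups: continuous, equivariant, transverse boundary maps together with a uniform contraction property along the geodesic flow. The continuity and equivariance of $\xi$ are already given by Proposition~\ref{pro:continuous}.

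\emph{Step 1 (Transversality).} For distinct $x,y\in\bgrf$, choose any third point $z\in\bgrf$ so that $\{x,y,z\}$ is pairwise distinct. By positivity of $\xi$, the triple $(\xi(x),\xi(y),\xi(z))$ is positive, hence pairwise transverse by the definition of a positive triple. Thus $\xi\colon\bgrf\to\gp$ is a continuous, $\rho$-equivariant, transverse boundary map.

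\emph{Step 2 (Dynamics on periodic points).} For any primitive hyperbolic $\gamma\in\grf$ with attracting fixed point $\gamma^+$ and repelling fixed point $\gamma^-$, equivariance forces $\rho(\gamma)$ to fix $\xi(\gamma^\pm)$. Pick $z\in\bgrf$ with $(\gamma^-,z,\gamma^+)$ oriented, and consider the nested sequence of diamonds
\[
V_n\defeq V_{\xi(\gamma^n\!\cdot z)}\bigl(\xi(\gamma^-),\xi(\gamma^+)\bigr).
\]
Continuity of $\xi$ and the dynamics of $\gamma$ on $\bgrf$ force $\bigcap_n V_n=\{\xi(\gamma^+)\}$, while Proposition~\ref{pro:a-priori} bounds uniformly the tripod norms of the triples $(\xi(\gamma^-),\xi(\gamma^n\!\cdot z),\xi(\gamma^+))$. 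Applying Proposition~\ref{pro:Acontract2} yields exponential contraction of the diamond metrics under iteration by $\rho(\gamma)$, so $\rho(\gamma)$ is $\Theta$-proximal with attracting fixed point $\xi(\gamma^+)$ (and, by the symmetric argument on $\overline\tau$, with repelling fixed point $\xi(\gamma^-)$).

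\emph{Step 3 (Uniform contraction along the geodesic flow).} To upgrade the pointwise dynamical statement to the Anosov property, I identify $\widetilde{T^1S}\cong\{(x,y)\in\bgrf^2\mid x\ne y\}\times\mathbb R$ and consider the associated flat bundle over $T^1S$ with fiber modeled on $\gp$. For a geodesic ray parametrized by its endpoints $(x^-,x^+)$, I choose an auxiliary $z$ with $(x^-,z,x^+)$ oriented, and produce along the forward flow a sequence of nested diamonds with intersection $\{\xi(x^+)\}$ and uniformly bounded tripod norms (again by Proposition~\ref{pro:a-priori}). Proposition~\ref{pro:Acontract2} gives exponential contraction of the diamond metrics at the forward endpoint, and the symmetric statement at the backward endpoint. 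Cocompactness of $\grf$ acting on triples of pairwise distinct boundary points (or on $T^1S$), together with the a~priori bound on the tripod norm, upgrades this contraction to the uniform exponential contraction/dilation that characterizes $\Theta$-Anosov representations. The boundary map of the Anosov representation is then $\xi$ by uniqueness.

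\emph{Main obstacle.} The decisive step is Step~3: transforming the qualitative contraction of diamond metrics on individual diamonds into the uniform exponential contraction required by the Anosov definition, simultaneously for all geodesics. This requires careful use of the cocompactness of the $\grf$-action together with the a~priori tripod norm bound of Proposition~\ref{pro:a-priori}, so that the contraction estimates from Proposition~\ref{pro:Acontract2} can be made uniform in the basepoint of the flow.
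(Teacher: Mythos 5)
Your proposal follows essentially the same route as the paper: continuity of $\xi$ (Proposition~\ref{pro:continuous}), diamond neighborhoods of the limit curve along the geodesic flow carrying the diamond metric, and the contraction property of Proposition~\ref{pro:Acontract2} made uniform via cocompactness together with the a priori tripod-norm bound of Proposition~\ref{pro:a-priori}. The paper implements your Step~3 concretely by fixing a hyperbolization and attaching to each triple $t=(x,y,z)$ the diamond $Y_t=V_{\xi(z)}\bigl(\xi(y),\xi(w(t))\bigr)$, with $w(t)$ the harmonic conjugate, which makes the family of diamonds continuous, equivariant and nested along the flow, and then concludes by a semicontinuity-plus-compactness argument; your Step~2 on proximality of individual elements is not needed.
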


Let us start with a general lemma
\begin{lemma}\label{lem:little}
  Let $\seq{b^0}$ and $\seq{b^1}$ be two sequences in~$\gp$
  converging to~$c$. Let~$d_0$ and~$d_1$ be in~$\gp$ such that
  $(d_0,c, d_0)$ is a positive triple and assume that, for all~$m$ in~$\mathbb{N}$,
  $$
  (d_0,b^0_m,b^1_m, d_1)
  $$
  is a positive quadruple.
Let $V_m$ be the unique diamond with extremities~$b^0_m$ and~$b^1_m$ contained in
the diamond $V_{c}(d_0,d_1)$.  Then
	$$
	\lim_{m\to\infty} V_m=\{c\}\ .
	$$
\end{lemma}
\begin{proof}
  Let $a_0$ be in~$V_{c}^*(d_0, d_1)$ and $a_1$ be in
  $V_{c}^{*}(a_0, d_1)$ so that $(a_0, d_0, c, d_1, a_1)$ is a positive
  quintuple and, for all big enough~$m$, $(a_0, d_0,b^0_m,b^1_m, d_1, a_1)$ is
  a positive configuration.

  Let $z_m$   belong to~$V_m$, we want to prove that 
$$
\lim_{m\to\infty} z_m=c\ .
$$

Let $p$  in $\overline{V}_{c}(d_0,d_1)$ be an accumulation point of the sequence~$\seq{z}$. Up to extracting a subsequence 
we may assume  
$$
\lim_{m\to\infty} z_m=p\ ,
$$	
By Corollary \ref{coro:inclus}, $p$~belongs to $V_c(a_0, a_1)$ and
in particular it belongs to $\Omega_{a_0}\cap\Omega_{a_1}$.
From the fact that $z_m$~belongs to~$V_{d_0}(a_0, b_{m}^{1})$ we get that
$p$~belongs to $\overline{V}_{d_0}(a_0,c)$; similarly $p$~belongs to
$\overline{V}_{d_1}(a_1,c)$. Therefore
$$
p\in \overline{V}_{d_0}(a_0,c) \cap \overline{V}_{d_1}(a_1,c)\cap  \Omega_{a_0}\cap\Omega_{a_1}\ .
$$
Let $V=V_{d_0}(a_0,c)$, and recall that by Lemma~\ref{lem:semigroup}, 
$$V_{d_1}(a_1,c)\subset V^*\ .$$
Finally remark that 
$$
\overline{V}\cap \Omega_{a_1}=\overline{\ms N}_{a_1} \cdot c\ , \overline{V}{}^{*}\cap \Omega_{a_1}=\overline{\ms N}^{-1}_{a_1} \cdot c\ ,
$$
for the (positive) semigroup $\ms N_{a_1}$ in $\ms U_{a_1}$.
Since 
$$
\overline{\ms N}_{a_1}\cap \overline{\ms N}^{-1}_{a_1}=\{\id\}\ ,
$$
one has $p=c$, which is what we wanted to prove. \end{proof}

\begin{proof}[Proof of Proposition \ref{prop:Anosov}]
  The chosen hyperbolization of~$S$ defines a $\grf$-invariant
  cross-ratio on $\bgrf\cong \rp$. Let us also fix   an orientation on $\bgrf$. For any cyclically oriented
  triple $t=(x,y,z)$, let us consider the harmonic (with respect to the
  cross-ratio)  quadruple $(x,y,z,w)$, and let then
  $$Y_t \defeq V_{\xi(z)}(\xi(y),\xi(w))\ .$$
  By construction $Y_t$ is an open neighborhood of $\xi(z)$. Moreover if
  $(x,y_1,y_0,z)$ is a cyclically oriented quadruple,
  \begin{equation}
    Y_{(x,y_0,z)}\subset Y_{(x,y_1,z)}\ .\label{eq:YtYs}
  \end{equation}
  Finally, since $\xi$ is continuous, by Lemma~\ref{lem:little} 
  \begin{equation}
    \lim_{y\to z}Y_{(x,y,z)}=\{\xi(z)\}\ .\label{eq:Ytxx}
  \end{equation}
  	
  We now deduce the Anosov property from Assertion~\eqref{eq:Ytxx}.

  Recall that the chosen  uniformization of the surface enables
 us to identify the space of triples in the boundary at infinity with the
unit tangent bundle $\mathsf U{\mathbf H}^2$ of the universal cover of~$S$.  Let
$\{\phi_s\}_{s\in\mathbb R}$ be the geodesic flow on $\mathsf U{\mathbf
  H}^2$. Let $\mathcal F$ be the trivial bundle $\gp\times \mathsf U{\mathbf
  H}^2$. The actions of $\grf$ on $\mathsf U{\mathbf H}^2$ and on~$\gp$ ---through~$\rho$--- give rise to an action of $\grf$ on~$\mathcal F$.

Let~$\mathcal U$ be the subbundle of~$\mathcal{F}$ with open fibers given
by
$$
\mathcal U=\{(x,v)\in \mathcal F\mid v\in \mathsf U{\mathbf H}^2,\ \ x\in Y_v\}\ .
$$
The bundle $\mathcal U$ is invariant by the $\grf$-action, moreover it has a canonical section $\sigma_0$ given by
$$
\sigma_0(x,y,z)=\xi(z)\ .
$$
Let us lift the flow $\{\phi_s\}_{s\in\mathbb R}$ to a flow $\{\Phi_s\}_{s\in\mathbb R}$ on $\mathcal F$ acting trivially on the first factor. By assertion~\eqref{eq:YtYs}, for all positive $s$
$$
\Phi_{-s}(\mathcal U)\subset\mathcal U\ .
$$
Moreover the section $\sigma_0$ is invariant by $\{\Phi_s\}_{s\in\mathbb R}$.

The diamond metric  $g_t$ and the diamond distance $d_t$ on each $Y_t$ give a metric on each fiber of $\mathcal U$ which depends continuously on the base and is equivariant under the  action of $\grf$.

For any $R$, let $\mathcal U(R)$ be the neighborhood of the image of the section~$\sigma_0$, given by
$$
\mathcal U(R)=\{(x,v)\in \mathcal U\mid v\in \mathsf U{\mathbf H}^2,\ \ d_v(x,\sigma_0(v))\leq R\}\ .
$$
It now follows from assertion~\eqref{eq:Ytxx} and Proposition~\ref{pro:Acontract2},  that for every~$u$ in $\mathsf U{\mathbf H}^2$, there is $s_u$ such that, for all $(x,u)$ in $\mathcal U(R)$
\begin{equation}
	\hbox{for all $s\geq s_u$,}\ \ g_{\Phi_s(x,u)}\circ {\mathsf T}_{(x,u)} \Phi_{-s}\leq \frac{1}{2}	g_{(x,u)}\ \ \ .\label{eq:su}
\end{equation}
Let now $s$ be the real valued function on  $\mathsf U{\mathbf H}^2$ defined by 
$$
s(u)=\inf\{s_u\mid s_u \hbox{ satisfies assertion~\eqref{eq:su}} \hbox{ on } \mathcal U(R)\}\ . $$
The function $u\mapsto s(u)$ is upper  semicontinuous  and  invariant under the action of $\pi_1(S)$. Thus by compactness of $\mathsf U{\mathbf H}^2/\pi_1(S)$  the function has an upper bound $s_0$. Then for all $s$ greater than $s_0$
\begin{equation*}
	\Phi_s^*g \leq 	\frac{1}{2}	g \ ,
\end{equation*}
on $\mathcal U(R)$. In other words, the action of $\{\Phi_{-s}\}_{s\in\mathbb
  R}$ is contracting on $\mathcal{V} =\mathcal{U}(R)$ and $\sigma_0$ is an invariant section.

Thus $\rho$ is $\Theta$-Anosov according to the definition given in the beginning of the section and $ \xi$~is its limit curve. 
 \end{proof}

Now Corollary~\ref{cor_intro:open} in the introduction follows directly from
the openness of the set of $\Theta$-Anosov representations. More precisely
  \begin{proof}[Proof of Corollary~\ref{cor_intro:open}] A positive
  representation $\rho_0$ with limit map $\xi_0$, is $\Theta$-Anosov. Thus there is an
  open neighborhood  $U$ of $\rho_0$ containing only $\Theta$-Anosov
  representations. For any $\rho$ in this neighborhood, let $\xi_\rho$ be the
  limit map. Note that this map is equivariant and transverse. The map
  $\rho\mapsto\xi_\rho$ which sends an Anosov representation to its limit
  curve is continuous. Since $\xi_{\rho_0}$ is positive it sends pairwise distinct triples in
  $\bgrf$ to positive triples. Moreover $\pi_1(S)$ acts cocompactly on the set
  of triples of pairwise distinct points of $\bgrf$, thus, for~$\rho$ close
  enough to $\rho_0$, the continuous curve $\xi_\rho$ sends pairwise distinct triples to positive triples. 
  Hence by Proposition~\ref{pro:triplenough},  $\xi_\rho$ is positive. 
  \end{proof}

\begin{remark}
The definition of $\Theta$-positive representations can be made in more
generality for non-elementary word hyperbolic group $\Gamma$ whose boundary
admits a cyclic ordering. This holds if $\Gamma$ is a surface group, but also
if $\Gamma$ is virtually free.
For example, an appropriate extension of the arguments in this section shows
that a representation of $\Gamma$ is $\Theta$-Anosov if it admits a $\rho$-equivariant positive boundary map $\xi\colon \partial_\infty \Gamma \to \gp$.
\end{remark}

\section{Closedness}\label{sec:closedness}

In this section we consider the space $\operatorname{Hom}^\Theta (\pi_1(S),\G)$ of
homomorphisms from $\pi_1(S)$ to $\G$,  which contains  a $\Theta$-loxodromic
element.

We show that the set of $\Theta$-positive representations  $\operatorname{Hom}_{\Theta\textrm{-pos}}(\pi_1(S),\G)$ is an open and closed subset of $\operatorname{Hom}^\Theta(\pi_1(S),\G)$, hence a union of connected components. 

We first have 

\begin{proposition}\label{prop:nonpara}
Every $\Theta$-positive representation  is an element of the set   $\operatorname{Hom}^\Theta(\pi_1(S),\G)$. Moreover, every $\Theta$-positive representation  has a compact centralizer and  does not  factor through a proper parabolic subgroup of $\G$.
\end{proposition}
 
\begin{proof} 
 
The   first part is a consequence of Proposition \ref{pro:prox-pos}.
Let us first note that since the centralizer of a positive triple is compact, the centralizer of a positive representation is compact as well. 
Let $\rho\colon \pi_1(S) \to \G$ be a positive representation with
$\rho$-equivariant positive boundary map $\xi\colon \bgrf \to \gp$. Then
$\rho$ is $\Theta$-Anosov with boundary map $\xi$. This remains true when
restricting the representation to a finite index subgroup.  To argue by
contradiction we can thus assume that without loss of generality
$\rho(\pi_1(S))$ is contained in a proper parabolic subgroup of $\G$. We
consider the semi-simplification $\rho^{ss}$ of~$\rho$, whose image is
contained in a Levi factor of the parabolic subgroup. 

By \cite[Proposition 1.8]{ggkw_anosov} the semi-simplification~$\rho^{ss}$ is
$\Theta$-Anosov, denote $\xi^{ss}$ the $\rho^{ss}$-equivariant boundary map.
Since $\rho^{ss}$ belongs to 
 the closure of the $\G$-orbit of $\rho$;  there exists
 thus a sequence $\seq{g}$ in~$\G$ such that $\rho^{ss}$ is the limit of $g_m \rho g_{m}^{-1}$. Since the boundary map $\xi^{ss}$ is transverse, Lemma~\ref{lem:open-trip} implies that the 
 boundary map $\xi^{ss}$ is also positive  as well. But this is a contradiction because the centralizer of $\rho^{ss}$ in $\G$ contains the center of the Levi factor of the parabolic subgroup which is non-compact.
\end{proof} 

By a classical result of Borel and Tits \cite[Corollaire 3.3]{Borel:1971vd}
(proved also by Morozov~\cite{MorozovRegularity} in characteristic zero), the set
$\operatorname{Hom}^*(\pi_1(S),\G)$ is contained in the set 
of reductive homomorphisms, \emph{i.e.}\ representations $\rho\colon \pi_1(S) \to \G$, whose Zariski closure is reductive. Thus a direct consequence of Theorem~\ref{prop:nonpara} is 

\begin{coro}
Let $\rho\colon \pi_1(S) \to \G$ be a $\Theta$-positive representation, then the Zariski closure of $\rho(\pi_1(S))$ is reductive.
\end{coro}

  We also show the following result as a consequence of a result of Benoist--Labourie  \cite{Benoist:1993}.

\begin{proposition}\label{pro:BL}
Assume that the Zariski closure $\mathsf H$ of the image of $\rho$ is such that the exponential of every element in the open Weyl chamber of $\mathsf H$ is loxodromic with respect to $\gp$. Then $\rho$ belongs to  $\operatorname{Hom}^\Theta(\pi_1(S),\G)$.

 In particular, every representation with Zariski dense image belongs to $\operatorname{Hom}^\Theta(\pi_1(S),\G)$.	
\end{proposition}

\begin{proof}
	Indeed by \cite[Theorem A.1.1]{Benoist:1993}, an element $h$ of the image of $\rho$ has an hyperbolic part which belongs to the Weyl Chamber. Hence $h$ is loxodromic.  
\end{proof}
 
We expect that the list of possible Zariski closures of $\Theta$-positive representations is indeed restrictive. Classifications of the Zariski closures for maximal representations were given in \cite{Burger_tight,Burger:2010ty, Hamlet_1, Hamlet_2} and for Hitchin representations in \cite{Guichard_Zariski, sambarino2020infinitesimal}.

Since the set of $\Theta$-positive representations  is open in  $\operatorname{Hom}(\pi_1(S),\G)$ by Corollary~\ref{cor_intro:open}, it is also open in $\operatorname{Hom}^*(\pi_1(S),\G)$.

We will now show 
 
\begin{theorem}{\label{theo:conn}}
The set of $\Theta$-positive homomorphisms is closed in the set $\operatorname{Hom}^\Theta(\pi_1(S),\G)$.
\end{theorem}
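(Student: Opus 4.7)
The plan is to take a sequence $\{\rho_n\}_{n\in\mathbb N}$ in $\operatorname{Hom}_{\Theta-pos}(\pi_1(S),\G)$ converging to some $\rho\in\operatorname{Hom}^*(\pi_1(S),\G)$ and to produce a positive $\rho$-equivariant boundary map. By Proposition~\ref{prop:Anosov} each $\rho_n$ is $\Theta$-Anosov with a continuous, $\rho_n$-equivariant positive boundary map $\xi_n\colon\bgrf\to\gp$; in particular, for every hyperbolic $\gamma\in\pi_1(S)$, $\rho_n(\gamma)$ is $\Theta$-proximal and $\xi_n(\gamma^+)$ is its attracting fixed flag.

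First I would extract a limiting boundary map on a dense subset. Let $E\subset\bgrf$ be the $\pi_1(S)$-invariant countable dense set of attracting fixed points of hyperbolic elements. Because $\rho_n(\gamma)\to\rho(\gamma)$ and the attracting flag of a $\Theta$-proximal element depends continuously on the element, a diagonal extraction along an enumeration of $E$ yields a subsequence (still denoted $\{\rho_n\}$) such that $\xi_n|_E$ converges pointwise to a $\rho$-equivariant map $\xi\colon E\to\gp$.

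Next I would argue that $\xi$ is positive. Since each $\xi_n$ is positive, for every cyclically ordered triple (resp.\ quadruple) in $E$ its image under $\xi$ is a pointwise limit of positive triples (resp.\ quadruples), and so lies in the closure of the set of positive configurations. The deformation Lemmas~\ref{lem:open-trip} and~\ref{lem:open-quad} then show that if $\xi$ is \emph{transverse} on pairs, meaning that $\xi(x)$ and $\xi(y)$ are transverse whenever $x\neq y$ in $E$, then every such limiting triple and quadruple is already positive, so that $\xi|_E$ is positive in the sense of Definition~\ref{def:pos}. Corollary~\ref{coro:posmap} will then promote $\xi$ to a left- (or right-) continuous positive $\rho$-equivariant map $\hat\xi\colon\bgrf\to\gp$, which exhibits $\rho$ as a $\Theta$-positive representation.

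The hard part, and the only place where $\rho\in\operatorname{Hom}^*(\pi_1(S),\G)$ is used, is to establish transversality of $\xi$ on $E$. I would argue by contradiction. Suppose there exist $x\neq y$ in $E$ with $\xi(x)\in S_{\xi(y)}$. By $\rho$-equivariance the entire $\pi_1(S)$-orbit of $(\xi(x),\xi(y))$ lies in the proper Zariski-closed ``non-transverse'' subvariety of $\gp\times\gp$. Combining this with the minimality of the $\pi_1(S)$-action on $\bgrf$ and with the semi-positivity of $\xi$ (which pins $\xi(E)$ inside a controlled $\G$-orbit of partially degenerate configurations), one concludes that the Zariski closure of $\rho(\pi_1(S))$ preserves a proper closed algebraic subset of $\gp$. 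Since $\G$ is simple, the $\G$-stabilizer of such a subset is contained in a proper parabolic subgroup, so a finite-index subgroup of $\rho(\pi_1(S))$ factors through a proper parabolic, contradicting $\rho\in\operatorname{Hom}^*(\pi_1(S),\G)$. This contradiction will force $\xi$ to be transverse on $E$, and by the previous paragraph complete the proof.
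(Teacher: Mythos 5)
Your overall scaffolding (extract a pointwise limit of the boundary maps on a countable invariant set, show the limit is transverse, upgrade transversality plus limits of positive configurations to positivity via the deformation lemmas, then extend by Corollary~\ref{coro:posmap}) matches the paper's strategy, but the step you yourself flag as "the hard part" — transversality — is exactly where your argument has a genuine gap. From a single non-transverse pair $\xi(x)\in S_{\xi(y)}$ you assert that, using minimality of the $\grf$-action and "semi-positivity" of $\xi$, the Zariski closure of $\rho(\pi_1(S))$ preserves a proper closed algebraic subset of $\gp$, and then that the stabilizer of such a subset lies in a proper parabolic. Neither claim is substantiated, and the second is false as a general principle: for $\G=\mathsf{SL}_n(\mathbb R)$ acting on projective space, the projectivized null cone of a form of signature $(p,q)$ is a proper closed algebraic subset whose stabilizer contains $\mathsf{SO}(p,q)$, which lies in no proper parabolic. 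More fundamentally, one degenerate pair in the limit does not by itself produce any $\rho_\infty$-invariant algebraic structure: a priori the limit map could be transverse on some pairs and degenerate on others, and nothing in your sketch rules this mixed behaviour out. This is precisely the configuration the paper has to work to exclude, and your proposal replaces that work with an unproved assertion.

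The paper's route is different at this point and is worth comparing. It sets up a dichotomy: either \emph{no} pair $\xi_\infty(x),\xi_\infty(y)$ is transverse, in which case the appendix result (Theorem~\ref{theo:para}, proved via the Bruhat decomposition — a statement you would also need in some form but never invoke) shows a finite-index subgroup of $\rho_\infty(\grf)$ lies in a parabolic, contradicting $\rho_\infty\in\operatorname{Hom}^*(\pi_1(S),\G)$; or \emph{some} pair is transverse. In the second case the paper does not get transversality of all pairs for free: it introduces the limiting diamonds $W_\infty(x,y)=\lim_m V^{\xi_m(c)}(\xi_m(x),\xi_m(y))$, shows that the one attached to a transverse pair is the closure of a diamond and hence Zariski dense (Lemma~\ref{lem:ZD3}), uses the dynamics of $\grf$ on $\bgrf$ to show all $W_\infty(x,y)$ have the same Zariski closure (Lemma~\ref{lem:ZarDens}), and then, for an arbitrary triple $(x,y,z)$, picks auxiliary pairwise transverse points $a,b,c$ in the three limiting diamonds and applies the necklace property (Corollary~\ref{coro:necklace}) twice to conclude that $(\xi_\infty(x),\xi_\infty(y),\xi_\infty(z))$ is pairwise transverse and positive (Proposition~\ref{pro:ZD2}); quadruple positivity then follows from Proposition~\ref{pro:basic}. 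This bootstrapping from one transverse pair to transversality and positivity of all configurations is the core of the closedness proof and is absent from your proposal. A minor further point: your justification of the extraction step via continuity of attracting flags is shaky, since $\rho(\gamma)$ need not be $\Theta$-proximal and the attracting flags of $\rho_n(\gamma)$ need not converge; the correct (and simpler) argument is compactness of $\gp$ plus a Cantor diagonal extraction, as in the paper.
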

 
We will first prove the following proposition of independent interest:

\begin{proposition}{\label{pro:vonv-transv}}
 	 Let $\seq{\rho}$ be a sequence of $\Theta$-positive representations converging to a representation $\rho_\infty$. Let $\xi_m$ be the limit curve of $\rho_m$. Assume that we can find  $x_0$ and $y_0$ in $\bgrf$ such that $\left\{(\xi_m(x_0),\xi_m(y_0))\right\}_{m\in\mathbb N}$ converges to a transverse pair, then $\rho_\infty$ is positive.
 \end{proposition}

\subsection{Proof of proposition \ref{pro:vonv-transv}}

We fix a countable set $A$ in $\bgrf$, invariant by $\grf$ and containing $x_0$ and $y_0$. We may now assume, by the Cantor diagonal argument, that $\seqm{\xi_m|_A}$ 
converges simply to a map $\xi_\infty$ from $A$ to $\gp$. By hypothesis $\xi_\infty(x_0)$ and $\xi_\infty(y_0)$ are transverse.

For any pair of distinct points $(x,y)$ in $A^2$, denote by $]x,y[$ the
interval in the oriented circle $\bgrf$ with origin~$x$ and extremity~$y$, let $c$ be in $A \cap ]x,y[$ and
set 
$$
W_\infty(x,y){\defeq}\lim_{n\rightarrow\infty} \overline{V}{}^{*}_{\xi_n(c)}(\xi_n(x),\xi_n(y))\ ,
$$
the convergence being for the Hausdorff topology, and using again the Cantor
diagonal extraction, we can and will assume that all those sequences converge of all
$(x,y)$ in~$A^2$ with $x\neq y$.
Observe that $W_\infty(x,y)$ only depends on $x$, $y$, and the interval $]x,y[$. 
Furthermore the following
equivariance property holds: $\rho_\infty(\gamma) W_\infty( x,y) = W_\infty(
\gamma\cdot x, \gamma.\cdot y)$.

\begin{lemma}\label{lem:ZD3}
	Assume that $\xi_\infty(x)$ and $\xi_\infty(y)$ are transverse then $W_\infty(x,y)$ is a closure of a diamond with extremities $\xi_\infty(x)$ and $\xi_\infty(y)$ and is Zariski dense.
\end{lemma}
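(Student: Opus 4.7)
The plan is to interpret $W_\infty(x,y)$ as a Hausdorff limit of closed subsets of the compact space $\gp$ (after extraction), and then to identify this limit as the closure of a diamond with the required extremities, from which Zariski density follows immediately.

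First, since $\xi_n$ is positive, the triple $(\xi_n(c),\xi_n(x),\xi_n(y))$ is positive for every~$n$, so the diamond $V_{\xi_n(c)}(\xi_n(x),\xi_n(y))$ and its opposite $V^{\xi_n(c)}(\xi_n(x),\xi_n(y))$ are well defined. By hypothesis $\xi_\infty(x)$ and $\xi_\infty(y)$ are transverse, and transversality is an open condition, so for $n$ large enough $\xi_n(x)$ and $\xi_n(y)$ are transverse. Extracting a subsequence, I may assume that the closures $\overline{V^{\xi_n(c)}(\xi_n(x),\xi_n(y))}$ converge in the Hausdorff topology on closed subsets of $\gp$ to a closed set $W$.

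Second, I would parameterize each diamond uniformly. By Proposition~\ref{pro:diam}(3), there exists a U-pinning $\sigma_n$ at $\xi_n(y)$ with $V^{\xi_n(c)}(\xi_n(x),\xi_n(y))=\sigma_n(\ms N^{-1})\cdot \xi_n(x)$, and by Fact~\ref{fact:param-posit-semi} the closure of this diamond is the image under the continuous map $u\mapsto \sigma_n(\Psi(u)^{-1})\cdot \xi_n(x)$ of $\overline{\ms C}$. Since the space of U-pinnings fibers over $\gp$ with fiber $\ms L^\circ$ modulo a compact subgroup (a discrete set of components of which indexes the two possible diamonds), and since $\xi_n(y)\to \xi_\infty(y)$, I can extract a further subsequence so that $\sigma_n$ converges to a U-pinning $\sigma_\infty$ at $\xi_\infty(y)$. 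By continuity of the parameterization,
\begin{equation*}
W=\sigma_\infty(\overline{\ms N^{-1}})\cdot \xi_\infty(x)=\overline{\sigma_\infty(\ms N^{-1})\cdot \xi_\infty(x)},
\end{equation*}
which by Proposition~\ref{pro:diam} is the closure of a diamond with extremities $\xi_\infty(x)$ and $\xi_\infty(y)$.

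Third, for Zariski density: by Proposition~\ref{pro:diam}(1) a diamond is a connected component of $\Omega_{\xi_\infty(x)}\cap \Omega_{\xi_\infty(y)}$, hence a non-empty open subset of $\gp$. Since $\gp=\G/\ms P_\Theta$ is an irreducible algebraic variety (quotient of a connected simple group by a parabolic), every non-empty open subset is Zariski dense, and therefore so is $W$.

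The main obstacle will be the second step, specifically making a coherent choice of U-pinnings $\sigma_n$ converging to a U-pinning $\sigma_\infty$ at $\xi_\infty(y)$: the U-pinning is only well defined up to the action of $\ms L^\circ$ and its discrete Weyl component, and the chosen $\sigma_n$ must single out the same one of the two diamonds at each $n$. The continuity of the convergence $\xi_n(c)\to$ (a point transverse to $\xi_\infty(y)$ for $n$ large enough) guarantees the choice of connected component is locally constant, so the limiting $\sigma_\infty$ is unambiguously determined by the prescribed connected component of $\bgrf\smallsetminus\{x,y\}$, consistent with the remark that $W_\infty(x,y)$ depends only on this component.
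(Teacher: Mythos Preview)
Your argument is correct. The paper's own proof is much terser: it simply observes that since $\xi_\infty(x)$ and $\xi_\infty(y)$ are transverse, $W_\infty(x,y)$ is the closure of a diamond (referring to Proposition~\ref{pro:bounded}), and that a diamond contains a nonempty open set and is therefore Zariski dense. Your direct construction via converging U-pinnings makes explicit the continuity of diamonds in their extremities that the paper's one-line citation leaves implicit. One point worth tightening: the space of U-pinnings at a given point is not compact (it is essentially a torsor for~$\ms L$, not $\ms L^\circ$ modulo a compact subgroup), so ``extract a further subsequence so that $\sigma_n$ converges'' needs a different justification. The clean fix is to use transitivity of~$\G$ on transverse pairs to choose $g_n\to e$ in~$\G$ with $g_n\cdot(\xi_\infty(x),\xi_\infty(y))=(\xi_n(x),\xi_n(y))$ for~$n$ large, and to set $\sigma_n\defeq g_n\circ\sigma_\infty$ for a fixed U-pinning~$\sigma_\infty$ at~$\xi_\infty(y)$; this gives convergence directly and makes the constancy of the chosen diamond-component transparent.
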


\begin{proof} Since $\xi_\infty(x)$ and $\xi_\infty(y)$ are transverse,  $W_\infty(x,y)$ is the closure of a diamond (see Proposition~\ref{pro:bounded}). It thus contains an open set, and in particular is Zariski dense. 
\end{proof}

\begin{lemma}{\label{lem:ZarDens}}
For every pair of distinct points $(x,y)$ and $(z,t)$ in $A$, one has
$$
\overline{W}{}_{\infty}^{Z}(x,y)=\overline{W}{}_{\infty}^{Z}(z,t)
$$
where  $\overline{M}{}^Z$ denotes the  Zariski closure of a set $M$.
In particular, for all distinct $x$ and $y$,  $W_\infty(x,y)$ is Zariski dense.
\end{lemma}
Observe that only the last assertion depends on the assumption that $\xi_\infty(x_0)$ and $\xi_\infty(y_0)$ are transverse.

\begin{proof} We shall use freely the following fact. If $\gamma$ is an algebraic automorphism of a variety $V$, if $B$ is a Zariski closed subset such that 
$\gamma(B)\subset B$ then $\gamma(B)=B$.

We first prove that if  $[u,v]\subset [w,s]$, then  we have
\begin{equation}
	\overline{W}{}_{\infty}^{Z}(u,v)=\overline{W}{}_{\infty}^{Z}(w,s)\ .\label{eq:Zincl1}
\end{equation}

We can always find an element $\gamma$ of $\grf$ such that
$$
\gamma[w,s]\subset [u,v]\ .
$$
Thus
$$
\rho_\infty(\gamma)\bigl(\overline{W}{}_{\infty}^{Z}(w,s)\bigr)\subset \overline{W}{}_{\infty}^{Z}(u,v)\subset\overline{W}{}_{\infty}^{Z}(w,s)\ .
$$
By the initial observation we get that 
$$
\overline{W}{}_{\infty}^{Z}(w,s) \subset \overline{W}{}_{\infty}^{Z}(u,v)  \subset\overline{W}{}_{\infty}^{Z}(w,s)\ ,
$$
and thus the assertion~\eqref{eq:Zincl1} follows. Take now $\gamma$ in $\grf$ such that 
$$
\gamma[x,y]\subset [x,y]\ , \ \ \gamma[x,y]\cup [z,t] \not=\bgrf\ .
$$
We can then find distinct points $u$ and $v$ such that $$
\left(\gamma[x,y]\cup [z,t]\right)\subset [u,v]\ .
$$
Thus, applying thrice assertion~\eqref{eq:Zincl1}, we have
$$
\overline{W}{}_{\infty}^{Z}(x,y) = \overline{W}{}_{\infty}^{Z}(\gamma\cdot
x,\gamma\cdot y) = \overline{W}{}_{\infty}^{Z}(u,v)=\overline{W}{}_{\infty}^{Z}(z,t)\ .
$$
The last assertion follows from the fact that $\xi_\infty(x_0)$ and $\xi_\infty(y_0)$ are transverse and thus $W_\infty(x_0,y_0)$ is Zariski dense by Lemma \ref{lem:ZD3}.
 \end{proof}

We are now in the position to show that $\rho_\infty$ is
$\Theta$-positive. This will be a consequence of the following proposition: 
\begin{proposition}\label{pro:ZD2}
  For any pair of distinct  points $(x,y)$, the
  pair $(\xi_\infty(x),\xi_\infty(y))$ is  transverse. Moreover, $\xi_\infty$ is a positive map.
 \end{proposition}
\begin{proof}
Let $(x, y, z)$ be a triple of pairwise distinct points in $\bgrf$. Let us denote for simplicity $x_n=\xi_n(x)$, $y_n=\xi_n(y)$ and $z_n=\xi_n(z)$ for $n$ in $\mathbb N\cup\{\infty\}$.
We choose diamonds by letting
\begin{equation*}
V^0_n=V^*_{z_n}(x_n,y_n)\ ,\ \ V^1_n=V^*_{y_n}(x_n,z_n)\ ,\ \
V^2_n=V^*_{y_n}(z_n,y_n)\ .
\end{equation*}
Since $W_\infty(x,y)$, $W_\infty(y,z)$, and $W_\infty(z,x)$ are Zariski dense, we can pick three points $a$, $b$, and $c$ so that 
\begin{enumerate}
\item $a\in W_\infty(x,y)$, $b\in W_\infty(y,z)$, $c\in W_\infty(z,x)$,
\item $a,b,c$ are pairwise transverse,
\item any point in $\{a,b,c\}$ is transverse to any point in $\{x_\infty,y_\infty,z_\infty\}$.
\end{enumerate}
Let now pick sequences $\seq{a}$, $\seq{b}$, and $\seq{c}$, with $a_m\in
V^0_m$, $b_m\in V^1_m$, and $c_m\in V^2_m$, and converging to~$a$, $b$, and~$c$ respectively.

We will now apply the necklace property several times. By
Proposition~\ref{coro:necklace}, $(a_m, b_m ,c_m)$ is a positive triple and since $a,b,c$ are pairwise transverse it follows that $(a,b,c)$ is a positive triple. 

Then, since $x_m$ belongs to $V^*_{b_m}(a_m,c_m)$, it follows that $x_\infty$ belongs to $\overline{V}{}^{*}_b(a,c)$. Since $x_\infty$ is transverse to both $a$ and $c$, $x_\infty$ belongs to $V^*_b(a,c)$. Symmetrically $y_\infty$ belongs to $V^*_c(a,b)$, $z_\infty$ belongs to $V^*_a(c,b)$. Applying Proposition~\ref{coro:necklace} again, $(x_\infty,y_\infty,z_\infty)$ is a positive triple.

The fact that, for any cyclically oriented quadruple $(x,y,z,w)$,  the
quadruple $(\xi_\infty(x), \xi_\infty(y), \xi_\infty(z), \xi_\infty(w))$ is positive, now follows from Proposition~\ref{pro:basic}.(\ref{it:basic2}). Hence the positivity of $\xi_\infty$ by definition.\end{proof}

\subsection{Proof of Theorem \ref{theo:conn}}

We consider a sequence $\seq{\rho}$  of $\Theta$-positive representations converging  to a representation $\rho_\infty$. Let $\seq{\xi}$ be the corresponding sequence of positive limit maps.
  Our assumption is  that image of $\rho_\infty$ contains a $\Theta$-loxodromic element $\rho_\infty(\gamma_0)$.

We fix a countable set $A$ in $\bgrf$, invariant by $\grf$, and containing $\gamma_0^+$ and $\gamma_0^-$. We may now assume applying the Cantor diagonal argument,  that $\seqm{\xi|_A}$ 
converges simply 
to a map $\xi_\infty$ from $A$ to $\gp$.

Observe now that if $y^+$ is the attracting fixed point of $\rho_\infty(\gamma_0)=\lim_{n\to\infty} \rho_\infty(\gamma_0)$, then $y^+$ is the limit of the attracting fixed points of $\{\rho_m(\gamma_0)\}_{m\in\mathbb N}$. It follows that $y^+=\xi_\infty(\gamma^+)$. The same holds for the repelling fixed point $y^-$ of $\rho_\infty(\gamma_0)$, and we have $y^-=\xi_\infty(\gamma^-)$. Since $y^+$ and $y^-$ are transverse, we can apply Proposition  \ref{pro:vonv-transv} to obtain that $\rho_\infty$ is positive.

\subsection{Proof of Theorem \ref{thm_intro:nonpara}}
 
By Theorem \ref{theo:conn} the set of $\Theta$-positive representations is closed in $\operatorname{Hom}^\Theta(\grf,\G)$. Using furthermore Corollary \ref{cor_intro:open} this set is  open. Thus the set of $\Theta$-positive representations is a union of connected components of  $\operatorname{Hom}^\Theta(\grf,\G)$. Since finally, we can obtain positive representation by factoring in a positive $\psld$ we deduce the Theorem.

\section{Positive representations and Cayley components}\label{sec:connected}
Let us recall that for a real split Lie group $\G$, the Hitchin component was originally defined by Hitchin as the
image of the Hitchin section~$\Phi$ which assigns to a tuple of holomorphic
differentials on a Riemann surface $\Sigma$ a $\G$-Higgs bundle on
$\Sigma$. Let us denote the image of $\Phi$ by $\mathcal{P}(\Sigma,
\G)$. Through the non-Abelian Hodge correspondence the set
$\mathcal{P}(\Sigma, \G)$ corresponds to a subset of the representation
variety $\operatorname{Rep}^+(\pi_1(S),\G)$, which we denote by the same
symbol. Hitchin showed that $\mathcal{P}(\Sigma, \G)$ is open and closed
(hence a union of connected components) and the map $\Phi$ gives a  parametrization of   $\mathcal{P}(\Sigma, \G)$. 
In the case of maximal representations a similar but more complicated
parametrization of the space of maximal representations was obtained in
\cite{BGPG2006}, \cite{GarciaPrada:2013be}, and \cite{Biquard:2017vs}. For any simple Lie groups admitting a positive
structure relative to~$\Theta$, the authors of 
 \cite{BCGGO_general} define in a similar way
subsets $\mathcal{P}_e(\Sigma,\G)$ of the moduli space of $\G$-Higgs bundles
by giving explicit parametrizations, see also \cite{Collier} and  \cite{ABCGGO} for indefinite orthogonal groups. They prove that 
$\mathcal{P}_e(\Sigma,\G)$ is open and closed in
$\operatorname{Rep}^+(\pi_1(S),\G)$. They further prove that all
representations in $\mathcal{P}_e(\Sigma,\G)$ have compact centralizer and
thus do not factor through a proper parabolic subgroup. They further show that the set $\mathcal{P}_e(\Sigma,\G)$ contains an open subset of $\Theta$-positive representations.

Theorem~\ref{thm_intro:nonpara} implies that any connected components of $\mathcal{P}^\Theta_e(\Sigma,\G) = \mathcal{P}_e(\Sigma,\G)\cap \operatorname{Rep}^\Theta(\pi_1(S),\G)/\G$ that contains a $\Theta$-positive representation consists entirely of $\Theta$-positive representations. Due to the extension of our main result in ~\cite{BGLPW_pos} we further have that any connected components of $\mathcal{P}_e(\Sigma,\G)$ that contains at least one $\Theta$-positive representation consists entirely of $\Theta$-positive representations. For many $G$ this implies that $\mathcal{P}_e(\Sigma,\G) \subset \operatorname{Hom}_{\Theta\textrm{-pos}}(\Gamma,\G)$. 

For this let us introduce the standard components of $\mathcal{P}_e(\Sigma,\G)$. 
Consider an
embedding of $\sld$, such that the induced map from~$\rp$ to~$\gp$ is a
positive circle, then the corresponding Fuchsian representation is
positive. These Fuchsian representations can now in addition be twisted by a
representation of~$\pi_1(S)$ into the centralizer of this $\sld$ in $\G$. This is
called a twisted positive Fuchsian representation. We call a component of  $\mathcal{P}_e(\Sigma,\G)$ standard if it contains a twisted positive Fuchsian representation. 

When $\G$ is a classical group and not locally isomorphic to $\ms{Sp}_4(\mathbb{R})$, or $ \ms
{SO}(p, p+1)$, every component of $\mathcal{P}_e(\Sigma,\G)$ is
standard~\cite{BCGGO_general}. 

For $\ms{Sp}_4(\mathbb{R})$  with the positive
structure relative to~$\Theta \neq \Delta$, positive representations correspond precisely to maximal representations \cite{Burger:2005, Burger:2010ty}. In particular, the exceptional connected components of maximal representations in $\mathcal{P}_e(\Sigma,\G)$
which do not contain any twisted positive Fuchsian
representation~\cite{Gothen:2001}, are positive. Similarly for the exceptional Hermitian Lie group of tube type, $\mathcal{P}_e(\Sigma,\G) = \operatorname{Hom}_{\Theta\textrm{-pos}}(\Gamma,\G)$ is the set of maximal representations. 

For $ \ms {SO}(p, p+1)$ with the positive
structure relative to~$\Theta \neq \Delta$, there also exist exceptional connected components in $\mathcal{P}_e(\Sigma,\G)$
 which do not contain any twisted positive Fuchsian representation. To deduce that they are positive we can use an embedding argument. Embedding 
$ \ms {SO}(p, p+1) \to \ms {SO}(p, p+2) $, these components are sent to standard components for  $\ms {SO}(p, p+2)$ ~\cite{BCGGO_general}, and thus they consist also entirely of $\Theta$-positive representations seen in $\ms {SO}(p, p+2) $.  Since any $\Theta$-positive representation in $\ms {SO}(p, p+2) $, whose image is contained in $ \ms {SO}(p, p+1)$ is also $\Theta$-positive as a representation into $ \ms {SO}(p, p+1)$, we conclude that these exceptional components consist entirely of $\Theta$-positive representations. 

However, for exceptional groups $G$ whose restricted root system have a Dynkin diagram of type $F_4$, we do not know if all connected components of $\mathcal{P}_e(\Sigma,\G)$ contain positive representations. 
If this where the case, this would imply $\mathcal{P}_e(\Sigma,\G) \subset \operatorname{Hom}_{\Theta\textrm{-pos}}(\Gamma,\G)$. We further expect to have $\mathcal{P}_e(\Sigma,\G) = \operatorname{Hom}_{\Theta\textrm{-pos}}(\Gamma,\G)$. For this to hold one would need to show that any positive representation lies in $\mathcal{P}_e(\Sigma,\G)$, or what might be the better approach, that none of the connected components of $ \operatorname{Hom}^+(\Gamma,\G)\backslash \mathcal{P}_e(\Sigma,\G)$ contain a $\Theta$-positive representation.

\bibliographystyle{amsplain}

\begin{thebibliography}{10}

\bibitem{ABCGGO}
Marta Aparicio-Arroyo, Steven Bradlow, Brian Collier, Oscar Garc\'{\i}a-Prada, Peter~B. Gothen, and Andr\'{e} Oliveira, \emph{{$\mathrm{SO}(p,q)$}-{H}iggs bundles and higher {T}eichm\"{u}ller components}, Invent. Math. \textbf{218} (2019), no.~1, 197--299.

\bibitem{Benoist:1993}
Yves Benoist and Fran{\c c}ois Labourie, \emph{{Sur les diff{\'e}omorphismes d'Anosov affines {\`a} feuilletages stable et instable diff{\'e}rentiables}}, Inventiones Mathematicae \textbf{111} (1993), no.~2, 285--308.

\bibitem{BGLPW_pos}
Jonas Beyrer, Olivier Guichard, Fran{\c{c}}ois Labourie, Beatrice Pozzetti, and Anna Wienhard, \emph{Positivity, cross-ratios, photons, and the {C}ollar {L}emma}, arXiv:2409.06294, 2024.

\bibitem{BeyrerPozzetti}
Jonas Beyrer and Beatrice Pozzetti, \emph{Positive surface group representations in {P}{O}(p,q)}, Journal of the European Mathematical Society (JEMS).

\bibitem{Biquard:2017vs}
Olivier Biquard, Oscar Garc{\'{\i}}a-Prada, and Roberto Rubio, \emph{Higgs bundles, the {Toledo} invariant and the {Cayley} correspondence}, J. Topol. \textbf{10} (2017), no.~3, 795--826.

\bibitem{Borel:1971vd}
Armand Borel and Jacques Tits, \emph{\'{E}l\'{e}ments unipotents et sous-groupes paraboliques de groupes r\'{e}ductifs. {I}}, Invent. Math. \textbf{12} (1971), 95--104. \MR{294349}

\bibitem{BCGGO_general}
Steven Bradlow, Brian Collier, Oscar Garc{\'{\i}}a-Prada, Peter~B. Gothen, and Andr{\'e} Oliveira, \emph{A general {Cayley} correspondence and higher rank {Teichm{\"u}ller} spaces}, Ann. Math. (2) \textbf{200} (2024), no.~3, 803--892 (English).

\bibitem{BGPG2006}
Steven Bradlow, Oscar Garc\'{\i}a-Prada, and Peter~B. Gothen, \emph{Maximal surface group representations in isometry groups of classical {H}ermitian symmetric spaces}, Geometriae Dedicata \textbf{122} (2006), 185--213.

\bibitem{Bridgeman:2015ba}
Martin~J. Bridgeman, Richard Canary, Fran{\c c}ois Labourie, and Andr{\~A}{\copyright}s Sambarino, \emph{{The pressure metric for Anosov representations}}, Geometric And Functional Analysis \textbf{25} (2015), no.~4, 1089--1179.

\bibitem{Burger:2005}
Marc Burger, Alessandra Iozzi, Fran{\c c}ois Labourie, and Anna Wienhard, \emph{{Maximal representations of surface groups: symplectic {A}nosov structures}}, Pure Appl. Math. Q. \textbf{1} (2005), no.~3, 543--590.

\bibitem{Burger_tight}
Marc Burger, Alessandra Iozzi, and Anna Wienhard, \emph{Tight homomorphisms and {H}ermitian symmetric spaces}, Geom. Funct. Anal. \textbf{19} (2009), no.~3, 678--721.

\bibitem{Burger:2010ty}
\bysame, \emph{Surface group representations with maximal {T}oledo invariant}, Ann. of Math. (2) \textbf{172} (2010), no.~1, 517--566.

\bibitem{Choi:1993vr}
Suhyoung Choi and William~M. Goldman, \emph{{Convex real projective structures on closed surfaces are closed}}, Proc. Amer. Math. Soc. \textbf{118} (1993), no.~2, 657--661.

\bibitem{Collier}
Brian Collier, \emph{{$\mathrm{SO}(n, n+1)$}-surface group representations and {H}iggs bundles}, Ann. Sci. ENS (4) \textbf{53} (2020), no.~6, 1561--1616.

\bibitem{Delzant:2007wb}
Thomas Delzant, Olivier Guichard, Fran{\c c}ois Labourie, and Shahar Mozes, \emph{{Displacing representations and orbit maps}}, Geometry, rigidity, and group actions, Univ. Chicago Press, Chicago, IL, 2011, pp.~494--514.

\bibitem{Dey-Kapovich}
Subhadip Dey and Michael Kapovich, \emph{Klein-{Maskit} combination theorem for {Anosov} subgroups: amalgams}, J. Reine Angew. Math. \textbf{819} (2025), 1--43 (English).

\bibitem{Fock:2006a}
Vladimir~V. Fock and Alexander~B. Goncharov, \emph{{Moduli spaces of local systems and higher {T}eichm{\"u}ller theory}}, Publ. Math. Inst. Hautes {\'E}tudes Sci. (2006), no.~103, 1--211.

\bibitem{GarciaPrada:2013be}
Oscar Garc{\'\i}a-Prada, Peter~B. Gothen, and Ignasi Mundet~i Riera, \emph{{Higgs bundles and surface group representations in the real symplectic group}}, Journal of Topology \textbf{6} (2013), no.~1, 64--118.

\bibitem{Goldman:1988vh}
William~M. Goldman, \emph{{Topological components of spaces of representations}}, Inventiones Mathematicae \textbf{93} (1988), no.~3, 557--607.

\bibitem{Gothen:2001}
Peter~B. Gothen, \emph{{Components of spaces of representations and stable triples}}, Topology \textbf{40} (2001), no.~4, 823--850.

\bibitem{ggkw_anosov}
Fran\c{c}ois Gu\'eritaud, Olivier Guichard, Fanny Kassel, and Anna Wienhard, \emph{Anosov representations and proper actions}, Geom. Topol. \textbf{21} (2017), 485--584.

\bibitem{Guichard_Zariski}
Olivier Guichard, \emph{The {Z}ariski closure of {H}itchin and positive representations}, Unpublished work.

\bibitem{Guichard:2008tu}
Olivier Guichard, \emph{{Composantes de Hitchin et repr{\'e}sentations hyperconvexes de groupes de surface}}, Journal of Differential Geometry \textbf{80} (2008), no.~3, 391--431.

\bibitem{GuichardWienhard_pos}
Olivier Guichard and Anna Wienhard, \emph{Generalizing {L}usztig's total positivity}, Invent. Math., no.~3, 707--799.

\bibitem{Guichard:2012eg}
Olivier Guichard and Anna Wienhard, \emph{{Anosov representations: domains of discontinuity and applications}}, Inventiones Mathematicae \textbf{190} (2012), no.~2, 357--438.

\bibitem{Guichard:2018vo}
\bysame, \emph{Positivity and higher {T}eichm\"{u}ller theory}, European {C}ongress of {M}athematics, Eur. Math. Soc., Z\"{u}rich, 2018, pp.~289--310. \MR{3887772}

\bibitem{Hamlet_1}
Oskar Hamlet, \emph{Tight holomorphic maps, a classification}, J. Lie Theory \textbf{23} (2013), no.~3, 639--654.

\bibitem{Hamlet_2}
\bysame, \emph{Tight maps and holomorphicity}, Transform. Groups \textbf{19} (2014), no.~4, 999--1026.

\bibitem{Hitchin:1992es}
Nigel~J. Hitchin, \emph{{Lie groups and {T}eichm{\"u}ller space}}, Topology \textbf{31} (1992), no.~3, 449--473.

\bibitem{Kostant_RootLevi}
Bertram {Kostant}, \emph{{Root systems for Levi factors and Borel-de Siebenthal theory}}, Symmetry and spaces. In Honor of Gerry Schwarz on the occasion of his 60th birthday, Basel: Birkh\"auser, 2010, pp.~129--152.

\bibitem{Labourie:2006}
Fran{\c c}ois Labourie, \emph{{Anosov flows, surface groups and curves in projective space}}, Invent. Math. \textbf{165} (2006), no.~1, 51--114.

\bibitem{Labourie:2005a}
\bysame, \emph{{Cross Ratios, {A}nosov Representations and the Energy Functional on {T}eichm{\"u}ller Space.}}, Ann. Sci. {\'E}c. Norm. Sup{\'e}r. (4) (2008), 1--38.

\bibitem{Labourie:2020tv}
Fran{\c{c}}ois Labourie and J{\'e}r{\'e}my Toulisse, \emph{Quasicircles and quasiperiodic surfaces in pseudo-hyperbolic spaces}, Invent. Math. \textbf{233} (2023), no.~1, 81--168.

\bibitem{Lusztig:1994}
George Lusztig, \emph{Total positivity in reductive groups}, Lie theory and geometry, Progr. Math., vol. 123, Birkh\"auser Boston, Boston, MA, 1994, pp.~531--568.

\bibitem{MorozovRegularity}
Vladimir~V. Morozov, \emph{Proof of the regularity theorem}, Usp. Mat. Nauk \textbf{11} (1956), no.~5(71), 191--194 (Russian).

\bibitem{Rapinchuk:1996}
Andrei~S. Rapinchuk, Valerii~V. Benyash-Krivetz, and Vladimir~I. Chernousov, \emph{Representation varieties of the fundamental groups of compact orientable surfaces}, Israel J. Math. \textbf{93} (1996), 29--71.

\bibitem{sambarino2020infinitesimal}
Andr{\'e}s Sambarino, \emph{Infinitesimal {Zariski} closures of positive representations}, J. Differ. Geom. \textbf{128} (2024), no.~2, 861--901 (English).

\bibitem{Wienhard_ICM}
Anna Wienhard, \emph{An invitation to higher {T}eichm\"uller theory}, Proceedings of the international congress of mathematicians, ICM 2018, Rio de Janeiro, Brazil, August 1--9, 2018. Volume II. Invited lectures, Hackensack, NJ: World Scientific; Rio de Janeiro: Sociedade Brasileira de Matem{\'a}tica (SBM), 2018, pp.~1013--1039.

\end{thebibliography}
\providecommand{\bysame}{\leavevmode\hbox to3em{\hrulefill}\thinspace}
\providecommand{\MR}{\relax\ifhmode\unskip\space\fi MR }
% \MRhref is called by the amsart/book/proc definition of \MR.
\providecommand{\MRhref}[2]{%
  \href{http://www.ams.org/mathscinet-getitem?mr=#1}{#2}
}
\providecommand{\href}[2]{#2}

\end{document}